\newcommand{\R}{{\mathbb R}}
\newcommand{\mA}{{\mathsf A}}
\DeclareMathOperator*{\argmin}{arg\,min}
\newcommand{\STAB}[1]{\begin{tabular}{@{}c@{}}#1\end{tabular}}
\newtheorem{lemma}{Lemma}
\newtheorem{remark}{Remark}
\newtheorem{corollary}{Corollary}
\newtheorem{proposition}{Proposition}
\date{}
\begin{document}

\title{ADMM-based residual whiteness principle for automatic parameter selection in super-resolution problems 
}
%
%
\author[1]{Monica Pragliola\thanks{monica.pragliola2@unibo.it}}
\author[2]{Luca Calatroni\thanks{calatroni@i3s.unice.fr}}
\author[1]{Alessandro Lanza\thanks{alessandro.lanza2@unibo.it}}
\author[1]{Fiorella Sgallari\thanks{fiorella.sgallari@unibo.it}}
\affil[1]{Department of Mathematics, University of Bologna, Italy}
\affil[2]{CNRS, UCA, INRIA, Morpheme, I3S, Sophia-Antipolis, France}

%
%
%
\maketitle              

\begin{abstract}
	We propose an automatic parameter selection strategy for the problem of 
	image super-resolution for images corrupted by blur and additive white Gaussian noise with unknown standard deviation. The proposed approach exploits the structure of both the down-sampling and the blur operators in the frequency domain and computes the optimal regularisation parameter as the one optimising a suitable residual whiteness measure. Computationally, the proposed strategy relies on the fast solution of generalised Tikhonov $\ell_2$-$\ell_2$ problems as proposed in \cite{FSR}. These problems naturally appear as substeps of the Alternating Direction Method of Multipliers (ADMM) optimisation approach used to solve super-resolution problems with non-quadratic and often non-smooth, sparsity-promoting regularisers both in convex and in non-convex regimes. After detailing the theoretical properties defined in the frequency domain which allow to express the whiteness functional in a compact way, we report an exhaustive list of numerical experiments proving the effectiveness of the proposed approach for different type of problems, in comparison with well-known parameter selection strategy such as, e.g., the discrepancy principle.
	
	\end{abstract}

\section{Introduction}

The problem of single-image Super-Resolution (SR) consists in finding a high-resolution (HR) image starting from single low-resolution (LR), blurred and noisy image measurements. Several applications benefit from the recovery of HR information from LR ones: their non-exhaustive list range from remote sensing to medical and microscopy imaging, where, typically, the SR problem aims to reconstruct high-quality and fine-detailed images overcoming the physical limitations imposed by the acquisition setting, such as, e.g., light diffraction phenomena \cite{Galbraith2011,Willett2004}.

The problem can be formulated in mathematical terms as follows. Let $\mathbf{X}\in \R^{N_r \times N_c}$ denote the original HR image, with $\mathbf{x}=\mathrm{vec}(\mathbf{X})\in \R^{N}$, $N=N_r N_c$, being its vectorisation  - throughout the paper, the vectorisation is performed by rows. The degradation process describing the mapping from HR to LR data can be described as the following linear  model
\begin{equation}
\label{eq:lin_model}
\mathbf{b} = \mathbf{S K x + e}\,,\,\quad \text{where }\mathbf{e}\text{ is a realisation of  }\mathbf{E}\sim\mathcal{N}(\mathbf{0}_n,\sigma^2 \mathbf{I}_n)\,,
\end{equation}
with $\mathbf{b},\mathbf{e}\in \R^n$, $n=n_r n_c$, being the vectorised observed LR image and additive white Gaussian noise (AWGN) realisation, respectively, both of size ${n_r\times n_c}$,  $\mathbf{S}\in\R^{n\times N}$ is a binary selection matrix inducing a pixel decimation with factor $d_r$ and $d_c$ along the rows and the columns of $\mathbf{X}$, respectively - i.e., $N_r=n_rd_r$, $N_c=n_cd_c$ - $\mathbf{K}\in\R^{N\times N}$ is a space-invariant blurring operator, $\mathbf{0}_n \in \R^n$ and $\mathbf{I}_n \in \R^{n \times n}$ denote the $n$-dimensional null vector and identity matrix, respectively, and, finally, $\mathbf{E}$ is an $n$-variate Gaussian-distributed random vector with zero mean and diagonal covariance matrix, with $\sigma>0$ indicating the (often unknown) noise standard deviation. In what follows, we set $d:=d_r d_c$, so that $N = nd$.

Finding $\mathbf{x}\in\R^N$ solving \eqref{eq:lin_model} is an ill-posed inverse problems. As an alternative, 
one can thus seek an estimate $\mathbf{x}^*$ of $\mathbf{x}$ which minimises a suitable cost function $\mathcal{J}:\R^N \to \R_+$, with $\R_+$ denoting the set of non-negative real numbers, which codifies both \emph{a-priori} information on the solution and on the Gaussian noise statistics. A standard approach for doing so consists in considering the problem:
\begin{equation}
\label{eq:l2gen}
\mathbf{x^*}(\mu)\;{\in}\;\arg\min_{\mathbf{x}\in\R^{N}}\left\{\mathcal{J}(\mathbf{x};\mu)\;{:=}\;\frac{\mu}{2}\|\mathbf{SKx}-\mathbf{b}\|_2^2+\mathcal{R}(\mathbf{x})\right\}\,,
\end{equation}
where $\mathcal{R}:\R^N\to\R_+$ is a possibly non-convex and non-smooth regularisation term,
the data fidelity term $(1/2) \|\mathbf{SKx}-\mathbf{b}\|_2^2$ encodes the AWGN assumption on $\mathbf{e}$, while the regularisation parameter $\mu \in \R_{++}$, $\R_{++}:=\R_+\setminus\{0\}$, balances the action of the fidelity against regularisation.
The choice of an optimal parameter $\mu$ in \eqref{eq:l2gen} is in fact crucial for obtaining high-quality reconstructions.

Many heuristic approaches have been proposed for automatically selecting $\mu$, such as, e.g., L-curve \cite{CHL} and generalised cross-validation (GCV) \cite {FRRS}; on the other hand, several methods exploiting the information on the noise corruption have been designed. 
Among them, we mention the Morozov discrepancy principle (DP) - see \cite{hansen,Chen2013,RR} for general problems and \cite{ItDiscr2015} for applications to SR - which can be formulated as follows:
\begin{equation}\label{eq:dp}
\text{Select }\mu = \mu^*\text{ such that }\|\mathbf{r}^*(\mu^*)\|_2 = \|\mathbf{SKx}^*(\mu)-\mathbf{b}\|_2 = \tau\sqrt{n}\sigma\,,
\end{equation}
with $\mathbf{x}^*(\mu)$ being the solution of \eqref{eq:l2gen} and $\tau$ denoting the discrepancy coefficient. When $\sigma$ is known, $\tau$ is set equal to $1$, otherwise a value slightly greater than $1$ is typically chosen to avoid noise under-estimation. In most real world applications an accurate estimate of $\sigma$ is not available, which often limits the applicability of DP strategies.

Alternative strategies overcoming this limitation, i.e. not requiring any prior knowledge on the noise level, and exploiting the noise whiteness property (see, e.g., \cite{LMSS,riot}) have been explored in the context of image deconvolution problems, (i.e. where $\mathbf{S}=\mathbf{I}_N$). In \cite{MF_whiteness}, the authors propose a statistically-motivated parameter selection procedure based on the minimisation of the residual normalised auto-correlation. The approach, that has been applied as an \emph{a posteriori} criterion, has been revisited in \cite{etna}, where the authors design a measure of whiteness of the residual image $\mathbf{r}^*(\mu)=\mathbf{Kx}^*(\mu)-\mathbf{b}$ that is regarded as a function of $\mu$. This strategy, therein called \emph{residual whiteness principle} (RWP), allows for an automatic estimation of the parameter $\mu$ and can be naturally embedded within an iterative ADMM optimisation scheme and shown to be effective for different choices of non-quadratic non-smooth regularisers $\mathcal{R}$.
In fact, whenever $\mathbf{S}=\mathbf{I}_N$ and upon the assumption of periodic boundary conditions for the involved images, models of the form \eqref{eq:l2gen} can be easily manipulated through an ADMM-type scheme where matrix-vector products and matrix inversions can be efficiently computed in the frequency domain by means of fast discrete Fourier transform solvers, due to the circulant structure of the operator $\mathbf{K}$ (see \cite{etna} for details).




In super-resolution problems, due to the presence of the downsampling operator $\mathbf{S}$, the product operator $\mathbf{A}:=\mathbf{SK}$ is, typically, unstructured and, as a consequence, the solution of \eqref{eq:l2gen} becomes more challenging.
Considering as an example the particular choice  $\mathcal{R}(\mathbf{x}):=\frac{1}{2}\| \mathbf{Lx}-\mathbf{v}\|_2^2$, with $\mathbf{L}\in\R^{M\times N}$ being a known regularisation matrix and $\mathbf{v}\in\R^{M}$ a given vector, problem \eqref{eq:l2gen} takes, for instance, the form of the following generalised Tikhonov $\ell_2$-$\ell_2$ problem
\begin{equation}
\label{eq:l2l2}
\mathbf{x^*}(\mu)\;{=}\;\arg\min_{\mathbf{x}\in\R^{N}}\left\{\frac{\mu}{2}\|\mathbf{SKx}-\mathbf{b}\|_2^2+\frac{1}{2}\|\mathbf{Lx}-\mathbf{v}\|_2^2\right\}\,.
\end{equation}
The solution of \eqref{eq:l2l2} can be computed by considering the corresponding optimality condition upon the inversion of unstructured operators, thus requiring in principle the use of iterative solvers, such as, e.g., the Conjugate Gradient algorithm. 

For problems like the one in \eqref{eq:l2l2}, in \cite{FSR} and upon a specific choice of $\mathbf{S}$ (decimation operator), the authors proposed an efficient solution strategy based on the use of some clever application of the Woodbury's formula. The resulting algorithm, therein called Fast Super-Resolution (FSR) algorithm significantly reduces the computational efforts required by iterative solvers as it boils down the problem to the inversion of diagonal matrices in the Fourier domain. As far as the parameter selection strategy is concerned, in \cite{FSR} a Generalised Cross Validation  strategy \cite{gcv} is used to select the optimal $\mu$, which is known to be impractical for large-scale problems \cite{RIP}.
Note that $\ell_2$-$\ell_2$ problems in the form \eqref{eq:l2l2} naturally arises when attempting to solve the general problem \eqref{eq:l2gen} by means of classical iterative optimisation solvers such as the ADMM. In this context, such smooth problems appear so as to enforce suitable variable splitting in terms of appropriate penalty parameters. Here, the FSR algorithm can thus still be used as an efficient solver.

\paragraph{Contribution.}
We propose an automatic parameter selection strategy for the regularisation parameter $\mu$ in \eqref{eq:l2gen}  which does not require any prior knowledge on the AWG noise level and can be applied to general possibly non-smooth and non-convex regularisers $\mathcal{R}$. Our approach is based on the optimisation of a suitably-defined measure of whiteness of the residual image in the frequency domain. \textcolor{black}{It can be thought of as a generalisation of the results previously obtained in \cite{etna} for image deconvolution problems to the more challenging scenario of single-image super-resolution problems. At the same time, it extends the results contained in an earlier conference version of this work \cite{SSVM_whiteness2021}} where only generalised-Tikhonov regularisation problems of the form \eqref{eq:l2l2} were considered and solved efficiently by means of the FSR algorithm considered in \cite{FSR}. By designing an ADMM-based optimisation strategy for solving the general problem \eqref{eq:l2gen} with an appropriate variable splitting, the residual whiteness principle can be applied iteratively along the ADMM iterations and used jointly as part of the solution of the $\ell_2$-$\ell_2$ substeps in the form \eqref{eq:l2l2}. Several numerical results  confirming the effectiveness of the proposed method in comparison to the standard Discrepancy Principle for standard image regularisers such as the generalised-Tikhonov and the Total Variation one are reported. Moreover, to provide some insights about the extension of such strategy to non-convex setting, we propose to embed the automatic estimation strategy yielded by the adoption of the RWP within an iterative reweighted $\ell_1$ scheme for tackling the non-convex continuous relaxation of the $\ell_0$ norm \cite{CEL0}.




\vspace{-0.3cm}
\subsection{Notations, preliminaries and assumptions}
\label{sec:ass}
In this section, we list the considered notations and some useful results that will be recalled in the discussion. Then, we detail the adopted assumptions for our derivations. 
\subsection{Notations and preliminaries}
In the following, for $c\in\mathbb{C}$ we use $\overline{c},|c|$  to indicate the conjugate and the modulus of $c$, respectively. We denote by $\mathbf{F},\mathbf{F}^H$ the 2D discrete Fourier transform {\color{black}matrix} and its inverse, respectively. For any $\textbf{v}\in\mathbb{R}^N$ and any $\mathbf{A}\in\mathbb{R}^{N\times N}$, we use the notations $\tilde{\mathbf{v}}=\mathbf{F}\mathbf{v}$ and $\tilde{\mathbf{A}}=\mathbf{F}\mathbf{A}\mathbf{F}^H$ to denote the action of the 2D Fourier transform operator $\mathbf{F}$ on vectors and matrices, respectively. Given a permutation matrix $\mathbf{P}\in\mathbb{R}^{N\times N}$, we denote by $\hat{\mathbf{v}}=\mathbf{P}\tilde{\mathbf{v}}$ and by $\hat{\mathbf{A}}=\mathbf{P}\tilde{\mathbf{A}}\mathbf{P}^T$ the action of $\mathbf{P}$ on the Fourier-transformed vector $\tilde{\mathbf{v}}$ and matrix $\tilde{\mathbf{A}}$, respectively. Finally, by $\check{\mathbf{A}}$ we denote the product $\check{\mathbf{A}}=\mathbf{P}\tilde{\mathbf{A}}^H\mathbf{P}^T$, i.e. the action of $\mathbf{P}$ on $\tilde{\mathbf{A}}^H$.

We recall some results that will be useful in the following discussion and a well-known property of the Kronecker product `$\otimes$'.

\begin{lemma}[\cite{lemma}]\label{lem:FSSH}
	Let $\mathbf{J}_d\in\R^{d\times d}$ denote a matrix of ones. We have:
	\begin{equation}\label{eq:FSSH}
	\widetilde{\mathbf{S}^H\mathbf{S}} = \frac{1}{d}(\mathbf{J}_{d_r}\otimes\mathbf{I}_{n_r}) \otimes (\mathbf{J}_{d_c}\otimes\mathbf{I}_{n_c})\,.
	\end{equation}
\end{lemma}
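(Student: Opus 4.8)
The plan is to exploit the separability of both the 2D discrete Fourier transform and the decimation operator with respect to the row and column directions, so as to reduce the statement to a one-dimensional computation. First I would note that, under the row-major vectorisation adopted throughout, the 2D Fourier matrix factorises as a Kronecker product of one-dimensional transforms, $\mathbf{F}=\mathbf{F}_{N_r}\otimes\mathbf{F}_{N_c}$, and that the separable decimation operator splits as $\mathbf{S}=\mathbf{S}_r\otimes\mathbf{S}_c$, where $\mathbf{S}_r\in\R^{n_r\times N_r}$ and $\mathbf{S}_c\in\R^{n_c\times N_c}$ select every $d_r$-th row and every $d_c$-th column (starting from the first), respectively. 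Using $(\mathbf{A}\otimes\mathbf{B})(\mathbf{C}\otimes\mathbf{D})=(\mathbf{AC})\otimes(\mathbf{BD})$ and $(\mathbf{A}\otimes\mathbf{B})^H=\mathbf{A}^H\otimes\mathbf{B}^H$, together with $\mathbf{F}^H=\mathbf{F}^{-1}$, I would then write
\[
\widetilde{\mathbf{S}^H\mathbf{S}}=\mathbf{F}\bigl[(\mathbf{S}_r^H\mathbf{S}_r)\otimes(\mathbf{S}_c^H\mathbf{S}_c)\bigr]\mathbf{F}^H=\bigl(\mathbf{F}_{N_r}\mathbf{S}_r^H\mathbf{S}_r\mathbf{F}_{N_r}^H\bigr)\otimes\bigl(\mathbf{F}_{N_c}\mathbf{S}_c^H\mathbf{S}_c\mathbf{F}_{N_c}^H\bigr),
\]
which isolates two independent one-dimensional problems of identical form.

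The core of the argument is then the one-dimensional identity $\mathbf{F}_{N_1}(\mathbf{S}_1^H\mathbf{S}_1)\mathbf{F}_{N_1}^H=\tfrac{1}{d_1}(\mathbf{J}_{d_1}\otimes\mathbf{I}_{n_1})$ with $N_1=n_1 d_1$, where $\mathbf{S}_1^H\mathbf{S}_1$ is the diagonal $0/1$ sampling matrix equal to $1$ exactly at the decimated positions $j\in\{0,d_1,\dots,(n_1-1)d_1\}$. I would compute its $(k,l)$ entry directly from the unitary DFT entries $(\mathbf{F}_{N_1})_{k,j}=N_1^{-1/2}\omega^{kj}$, $\omega=e^{-2\pi\mathrm{i}/N_1}$, obtaining the geometric sum
\[
\bigl(\mathbf{F}_{N_1}\mathbf{S}_1^H\mathbf{S}_1\mathbf{F}_{N_1}^H\bigr)_{k,l}=\frac{1}{N_1}\sum_{m=0}^{n_1-1}\omega^{(k-l)md_1}=\frac{1}{N_1}\sum_{m=0}^{n_1-1}e^{-2\pi\mathrm{i}(k-l)m/n_1},
\]
where the simplification $\omega^{d_1}=e^{-2\pi\mathrm{i}/n_1}$ uses $N_1=n_1 d_1$. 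The sum of these $n_1$-th roots of unity equals $n_1$ when $k\equiv l\pmod{n_1}$ and vanishes otherwise, so the entry equals $1/d_1$ precisely on the residue pattern $k\equiv l\pmod{n_1}$ and is zero elsewhere.

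To conclude the 1D identity I would match this residue pattern to the Kronecker structure through the bookkeeping $k=a\,n_1+b$, $l=c\,n_1+d$, with $a,c\in\{0,\dots,d_1-1\}$ and $b,d\in\{0,\dots,n_1-1\}$: since $(\mathbf{J}_{d_1}\otimes\mathbf{I}_{n_1})_{k,l}=(\mathbf{J}_{d_1})_{a,c}(\mathbf{I}_{n_1})_{b,d}$ is $1$ iff $b=d$, and $k\equiv l\pmod{n_1}$ is likewise equivalent to $b=d$, the two matrices coincide. Applying this to each factor yields $\tfrac{1}{d_r}(\mathbf{J}_{d_r}\otimes\mathbf{I}_{n_r})$ and $\tfrac{1}{d_c}(\mathbf{J}_{d_c}\otimes\mathbf{I}_{n_c})$; their Kronecker product, after collecting the scalar $\tfrac{1}{d_r d_c}=\tfrac1d$, is exactly the right-hand side of \eqref{eq:FSSH}.

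I expect the main obstacle to be bookkeeping rather than analysis. One must fix the index conventions so that the row factor consistently appears first in every Kronecker product (as dictated by the row-major vectorisation), verify that $\mathbf{S}=\mathbf{S}_r\otimes\mathbf{S}_c$ genuinely encodes the 2D decimation via $\mathrm{vec}(\mathbf{S}_r\mathbf{X}\mathbf{S}_c^T)=(\mathbf{S}_r\otimes\mathbf{S}_c)\mathrm{vec}(\mathbf{X})$, and track the normalisation so that the scalar $1/d$ — rather than $1/\sqrt{d}$ or $1$ — emerges from the unitary convention $\mathbf{F}^H=\mathbf{F}^{-1}$; the nonzero-offset case, which would introduce extra root-of-unity phases, is excluded by the zero-offset sampling assumed above. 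Once these conventions are pinned down, the remaining steps are the routine geometric-sum evaluation and the index matching.
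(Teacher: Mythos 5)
Your proof is correct, but note that the paper itself never proves this lemma: it is imported verbatim from the cited reference (the lemma is stated with the attribution ``[\cite{lemma}]''), and the paper's appendix only proves Propositions \ref{prop:w_sr}, \ref{prop:x}, \ref{lem:new}, Corollary \ref{cor:1} and Proposition \ref{propr:w_fin}, all of which \emph{use} \eqref{eq:FSSH} as a black box. So there is no in-paper argument to compare against; what you have produced is a self-contained substitute for the external citation. Your route --- factoring $\mathbf{F}=\mathbf{F}_{N_r}\otimes\mathbf{F}_{N_c}$ and $\mathbf{S}=\mathbf{S}_r\otimes\mathbf{S}_c$ under the row-major vectorisation, invoking the mixed-product rule (the paper's Lemma \ref{lem:kron}) to split the claim into two 1D identities, and evaluating each via the geometric sum $\frac{1}{N_1}\sum_{m=0}^{n_1-1}e^{-2\pi\mathrm{i}(k-l)m/n_1}$ --- is sound throughout: the normalisation $1/d_1$ emerges correctly from the unitary convention $\mathbf{F}^H\mathbf{F}=\mathbf{I}$ assumed in \eqref{eq:DFTs_1}, the residue-pattern matching $k\equiv l\pmod{n_1}$ correctly identifies $\mathbf{J}_{d_1}\otimes\mathbf{I}_{n_1}$ (rather than $\mathbf{I}_{n_1}\otimes\mathbf{J}_{d_1}$, which would correspond to the block-diagonal pattern), and the final collection of scalars gives $1/(d_rd_c)=1/d$ as required. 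Your construction of $\mathbf{S}$ is also consistent with assumption (A4) ($\mathbf{S}\mathbf{S}^H=\mathbf{I}_n$, zero-filling adjoint), and you correctly flag that a nonzero sampling offset would introduce harmless diagonal phase factors that are excluded by convention. The one thing worth making explicit if this were to be written out in full is the verification that $\mathrm{vec}(\mathbf{S}_r\mathbf{X}\mathbf{S}_c^T)=(\mathbf{S}_r\otimes\mathbf{S}_c)\mathrm{vec}(\mathbf{X})$ and $\mathrm{vec}(\mathbf{F}_{N_r}\mathbf{X}\mathbf{F}_{N_c}^T)=(\mathbf{F}_{N_r}\otimes\mathbf{F}_{N_c})\mathrm{vec}(\mathbf{X})$ both hold for the \emph{row-wise} vectorisation the paper adopts (for column-wise vectorisation the factors would swap); you state this as a bookkeeping obstacle and handle it with the correct ordering, so the argument stands.
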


\begin{lemma}\label{lem:kron}
	Let $\mathbf{A},\mathbf{B},\mathbf{C},\mathbf{D}$ be matrices such that $\mathbf{AC},\mathbf{BD}$ exist. We have:
	\begin{equation}\label{eq:kron}
	(\mathbf{A}\otimes\mathbf{B})
	(\mathbf{C}\otimes\mathbf{D}) = 
	(\mathbf{AC}\otimes\mathbf{BD})\,.
	\end{equation}
\end{lemma}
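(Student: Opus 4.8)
The plan is to establish the identity by direct block multiplication, exploiting the block structure that is intrinsic to the Kronecker product. First I would fix conformable dimensions consistent with the hypothesis that $\mathbf{AC}$ and $\mathbf{BD}$ exist: say $\mathbf{A}\in\R^{m\times k}$, $\mathbf{C}\in\R^{k\times n}$, $\mathbf{B}\in\R^{p\times l}$ and $\mathbf{D}\in\R^{l\times q}$. With this choice, $\mathbf{A}\otimes\mathbf{B}$ is an $(mp)\times(kl)$ matrix that can be read as an $m\times k$ array of $p\times l$ blocks whose $(i,s)$ block equals $a_{is}\mathbf{B}$; likewise $\mathbf{C}\otimes\mathbf{D}$ is a $(kl)\times(nq)$ matrix viewed as a $k\times n$ array of $l\times q$ blocks with $(s,j)$ block $c_{sj}\mathbf{D}$. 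The first thing to check is that these block partitions are conformable for multiplication: the inner block dimension matches ($k$ block-columns against $k$ block-rows) and, within each block, $\mathbf{B}\in\R^{p\times l}$ multiplies $\mathbf{D}\in\R^{l\times q}$, which is exactly the existence hypothesis on $\mathbf{BD}$.

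The key step is then to compute the $(i,j)$ block of the product by the usual block row-times-column rule. Since scalars commute with matrices, each block contribution factors as $(a_{is}\mathbf{B})(c_{sj}\mathbf{D}) = a_{is}c_{sj}\,\mathbf{B}\mathbf{D}$, so summing over the shared block index $s$ gives $\big(\sum_{s=1}^{k} a_{is}c_{sj}\big)\mathbf{B}\mathbf{D} = (\mathbf{AC})_{ij}\,\mathbf{B}\mathbf{D}$. On the other hand, by definition the $(i,j)$ block of $\mathbf{AC}\otimes\mathbf{BD}$ is precisely $(\mathbf{AC})_{ij}(\mathbf{BD})$. As these agree for every pair $(i,j)$, and both sides carry the same block partition into $p\times q$ blocks arranged in an $m\times n$ grid, the two matrices coincide entrywise, which proves \eqref{eq:kron}.

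There is no genuine obstacle here: the statement is the classical mixed-product property of the Kronecker product and the argument is essentially a one-line block computation once the indexing is fixed. The only point demanding care is purely bookkeeping, namely keeping the four sets of dimensions straight and confirming that the block partitions line up, so that block multiplication is legitimate and the scalar factors may be pulled out to isolate $\mathbf{B}\mathbf{D}$. An alternative, fully equivalent route would be to argue at the level of scalar entries via the indexing $(\mathbf{A}\otimes\mathbf{B})_{(i-1)p+r,\,(s-1)l+t}=a_{is}b_{rt}$, but the block viewpoint is cleaner and avoids the heavier index manipulation.
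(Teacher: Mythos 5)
Your proof is correct. Note that the paper itself does not prove this lemma: it is recalled as a well-known property of the Kronecker product (the classical mixed-product property) and stated without proof or citation, so there is no paper argument to compare against. Your block-multiplication argument is the standard textbook justification, and it is carried out soundly: the dimension bookkeeping is right, the block partitions of $\mathbf{A}\otimes\mathbf{B}$ (an $m\times k$ grid of $p\times l$ blocks $a_{is}\mathbf{B}$) and of $\mathbf{C}\otimes\mathbf{D}$ (a $k\times n$ grid of $l\times q$ blocks $c_{sj}\mathbf{D}$) are conformable precisely because $\mathbf{AC}$ and $\mathbf{BD}$ exist, and pulling the scalars out of each block product to get $\bigl(\sum_{s} a_{is}c_{sj}\bigr)\mathbf{B}\mathbf{D} = (\mathbf{AC})_{ij}\,\mathbf{B}\mathbf{D}$ is exactly the computation needed. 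Your proposal thus supplies a complete proof of a statement the paper leaves implicit.
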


\begin{lemma}[Woodbury formula] \label{lem:woodbury}
	Let $\mathbf{A}_1, \mathbf{A}_2,\mathbf{A}_3,\mathbf{A}_4$ be matrices and let $\mathbf{A}_1$ and $\mathbf{A}_3$ be invertible. Then, the following inversion formula holds:
	\begin{eqnarray}\label{eq:woodbury}
	(\mathbf{A}_1+\mathbf{A}_2\mathbf{A}_3\mathbf{A}_4)^{-1}=\mathbf{A}_1^{-1}+\mathbf{A}_1^{-1}\mathbf{A}_2(\mathbf{A}_3^{-1}+\mathbf{A}_4\mathbf{A}_1^{-1}\mathbf{A}_2)^{-1}\mathbf{A}_4\mathbf{A}_1^{-1}.
	\end{eqnarray}
	
\end{lemma}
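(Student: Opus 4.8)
The plan is to prove \eqref{eq:woodbury} by direct verification rather than by any abstract construction: I would exhibit the right-hand side as an explicit candidate for the inverse and check that, multiplied by $\mathbf{A}_1+\mathbf{A}_2\mathbf{A}_3\mathbf{A}_4$, it returns the identity. Since all the matrices appearing are square and the hypotheses guarantee that $\mathbf{A}_1$, $\mathbf{A}_3$ and the inner sum $\mathbf{A}_3^{-1}+\mathbf{A}_4\mathbf{A}_1^{-1}\mathbf{A}_2$ are invertible, a one-sided check suffices to identify the two-sided inverse. To lighten the algebra I would first abbreviate $\mathbf{B}:=(\mathbf{A}_3^{-1}+\mathbf{A}_4\mathbf{A}_1^{-1}\mathbf{A}_2)^{-1}$, so that the claimed inverse reads $\mathbf{G}:=\mathbf{A}_1^{-1}+\mathbf{A}_1^{-1}\mathbf{A}_2\mathbf{B}\,\mathbf{A}_4\mathbf{A}_1^{-1}$, and record the defining relation $\mathbf{B}^{-1}=\mathbf{A}_3^{-1}+\mathbf{A}_4\mathbf{A}_1^{-1}\mathbf{A}_2$, which is the single identity that drives the whole cancellation.

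Next I would form the product $(\mathbf{A}_1+\mathbf{A}_2\mathbf{A}_3\mathbf{A}_4)\,\mathbf{G}$ and expand it into four summands. The leading summand $\mathbf{A}_1\mathbf{A}_1^{-1}$ already produces the identity, so the task reduces to showing that the remaining three terms combine to $\mathbf{0}$. Each of these three terms carries $\mathbf{A}_2$ as a common left factor and $\mathbf{A}_4\mathbf{A}_1^{-1}$ as a common right factor, so I would factor these out and reduce the entire verification to proving that a single bracketed inner expression in $\mathbf{A}_3$, $\mathbf{B}$ and $\mathbf{A}_4\mathbf{A}_1^{-1}\mathbf{A}_2$ collapses. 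Isolating this bracket is the whole point of the abbreviation $\mathbf{B}$, and it is where the specific form of the inner sum in \eqref{eq:woodbury} has been engineered to make the bookkeeping tractable.

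The crux — and the step I expect to be the main obstacle — is this inner collapse. Here I would substitute $\mathbf{A}_4\mathbf{A}_1^{-1}\mathbf{A}_2=\mathbf{B}^{-1}-\mathbf{A}_3^{-1}$, read off from the defining relation, so that the factor $\mathbf{B}$ meets $\mathbf{B}^{-1}$ and the $\mathbf{A}_3$- and $\mathbf{A}_3^{-1}$-weighted pieces are arranged to annihilate one another, leaving exactly the identity. The delicacy is purely non-commutative bookkeeping: the matrices cannot be reordered, so one must track the placement of every factor and invoke the invertibility of $\mathbf{A}_3$ and of the inner sum precisely at the points where $\mathbf{A}_3^{-1}$ and $\mathbf{B}$ are introduced. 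As an alternative that sidesteps some of this manipulation, I would set up the block-matrix route: assemble the $2\times2$ block operator with diagonal blocks built from $\mathbf{A}_1$ and $\mathbf{A}_3$ and off-diagonal blocks built from $\mathbf{A}_2$ and $\mathbf{A}_4$, compute its inverse by Schur-complement elimination in the two possible orders, and match the resulting $(1,1)$ blocks; this recovers \eqref{eq:woodbury} without expanding the four-term product by hand.
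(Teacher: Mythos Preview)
The paper does not supply its own proof of this lemma; it is quoted as a standard result, so there is no argument in the text to compare yours against. Your direct-verification strategy is the standard one and would work --- but not for the identity exactly as printed. The formula in \eqref{eq:woodbury} carries a sign error: the correct Woodbury identity reads
\[
(\mathbf{A}_1+\mathbf{A}_2\mathbf{A}_3\mathbf{A}_4)^{-1}
=\mathbf{A}_1^{-1}\,\textbf{--}\,\mathbf{A}_1^{-1}\mathbf{A}_2(\mathbf{A}_3^{-1}+\mathbf{A}_4\mathbf{A}_1^{-1}\mathbf{A}_2)^{-1}\mathbf{A}_4\mathbf{A}_1^{-1},
\]
with a minus, not a plus, in front of the correction term. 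If you actually carry out the expansion you describe with your $\mathbf{G}=\mathbf{A}_1^{-1}+\mathbf{A}_1^{-1}\mathbf{A}_2\mathbf{B}\mathbf{A}_4\mathbf{A}_1^{-1}$ and substitute $\mathbf{A}_4\mathbf{A}_1^{-1}\mathbf{A}_2=\mathbf{B}^{-1}-\mathbf{A}_3^{-1}$, the three ``remaining'' terms do not cancel: you are left with $2\,\mathbf{A}_2\mathbf{A}_3\mathbf{A}_4\mathbf{A}_1^{-1}$, not $\mathbf{0}$. So the crux step fails exactly where you flag it as delicate. With the sign corrected, your computation (and your block-Schur alternative) goes through verbatim.

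Two smaller remarks. First, $\mathbf{A}_2$ and $\mathbf{A}_4$ need not be square; in the paper's own application (Proposition~\ref{prop:x}) they are rectangular, so your opening assertion that ``all the matrices appearing are square'' is inaccurate, though harmless for the argument. Second, the invertibility of the inner sum $\mathbf{A}_3^{-1}+\mathbf{A}_4\mathbf{A}_1^{-1}\mathbf{A}_2$ is not part of the stated hypotheses; it follows once one also assumes $\mathbf{A}_1+\mathbf{A}_2\mathbf{A}_3\mathbf{A}_4$ is invertible (implicit in writing its inverse), and you should say so rather than list it among the givens.
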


\subsection{Assumptions}
\label{sec:ass2}

In this section, we detail the class of variational models of interest and list the assumptions adopted for the regularisation term $\mathcal{R}(\mathbf{x})$ as well as for the decimation matrix $\mathbf{S}$ and the blurring matrix $\mathbf{K}$.

In this paper, we focus on the automatic selection of the regularisation parameter $\mu$ in super-resolution variational models of the form:
\begin{equation}
\mathbf{x}^*(\mu)\,{=}\,\arg\min_{\mathbf{x} \in \R^N}\left\{\mathcal{J}(\mathbf{x} ;\mu) \,{=}\, \frac{\mu}{2}\lVert \mathbf{SKx}  \,{-}\, \mathbf{b}  \rVert_2^2 \,{+}\, \mathcal{R}(\mathbf{x})\right\},\;\,\mathcal{R}(\mathbf{x}) \:{:=}\: G\left(\mathbf{Lx}\right).
\label{eq:models}
\end{equation}
We refer to $\mathbf{L} \in \R^{M \times N}$ as the regularisation matrix, whereas the regularisation function $G: \R^{M} \to \overline{\R} := \R \cup \{+\infty\}$ is nonlinear and possibly non-smooth.

The general variational model in \eqref{eq:models} undergoes the following assumptions:
\begin{itemize}
	\item[(A1)] The blur matrix $\mathbf{K} \in \R^{N \times N}$, decimation matrix $\mathbf{S} \in \R^{n \times N}$  and regularisation matrix $\mathbf{L} \in \R^{M \times N}$ are such that $\mathrm{null}(\mathbf{SK}) \cap \mathrm{null}(\mathbf{L}) = \mathbf{0}_N$.
	\item[(A2)] The regularisation function $G:\mathbb{R}^M\to \overline{\R}$ is proper, lower semi-continuous, convex and coercive.
	\item[(A3)] The blur matrix $\mathbf{K}$ represents a 2D discrete convolution operator - which follows from the blur being space-invariant - and the regularisation matrix $\mathbf{L}$ is of the form:
	\begin{equation}
	\mathbf{L} = \left( \mathbf{L}_1^T,\ldots,\mathbf{L}_s^T \right)^T \!\!{\in}\; \R^{sN \times N}, \;\:\! s \in \mathbb{N} \setminus \{0\} \, , \quad\! \mathrm{with} \; \; \mathbf{L}_j \in \R^{N \times N}, \; j = 1,\ldots,s\,,
	\end{equation}
	matrices also representing 2D discrete convolution operators.
	\item[(A4)] The decimation matrix $\mathbf{S} \in \R^{n \times N}$ is a binary selection matrix, such that $\mathbf{SS}^H = \mathbf{I}_n$ and the operator $\mathbf{S}^H\in\R^{N\times n}$ interpolates the decimated image with zeros.
	%
	\item[(A5)] The regularisation function $G$ is
	\emph{easily proximable}, that is, the proximity operator of $G$ at any $\mathbf{t} \in \R^{M}$,
	\begin{equation}
	\mathrm{prox}_{G}(\mathbf{t}) = \arg\min_{\mathbf{z}\in\R^{M}}\left\{G(\mathbf{z}) + \frac{1}{2}\lVert \mathbf{t} - \mathbf{z} \rVert_2^2\right\}\,,
	\end{equation}
	can be efficiently computed. 
\end{itemize}

Assumptions (A1)-(A2) guarantee the existence - and, eventually, uniqueness - of solutions of the considered class of variational super-resolution models \eqref{eq:models}, as formally stated in Proposition \ref{prop:ciao} below, whose proof can be easily derived from the one of Proposition 2.1 in \cite{etna}. 
\begin{proposition}
	\label{prop:ciao}
	If assumptions (A1)-(A2) above are fulfilled, for any fixed $\mu \in \R_{++}$ the function $\mathcal{J}(\,\cdot\,;\mu): \R^{N} \to \overline{\R}$ in (\ref{eq:models}) is proper, 
	lower semi-continuous, convex and coercive, hence it admits global minimisers. Furthermore, if matrix $\,\mathbf{S K}$ has full rank, then the global minimiser is unique.
\end{proposition}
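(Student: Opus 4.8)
The plan is to verify, for fixed $\mu\in\R_{++}$, the four structural properties of $\mathcal{J}(\,\cdot\,;\mu)$ separately — properness, lower semi-continuity, convexity, coercivity — since together they deliver existence of global minimisers through the direct method, and then to handle uniqueness on its own via strict convexity. The first three properties are inherited from the two summands and are essentially routine; the genuine content, and the only point at which assumption (A1) is really used, is coercivity, so that is where I expect the main work to lie.

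For convexity, lower semi-continuity and properness I would reason as follows. The fidelity $\mathbf{x}\mapsto\frac{\mu}{2}\lVert\mathbf{SKx}-\mathbf{b}\rVert_2^2$ is a finite-valued, continuous — hence proper and lower semi-continuous — convex quadratic, being the composition of the convex map $\frac12\lVert\,\cdot\,-\mathbf{b}\rVert_2^2$ with the affine map $\mathbf{x}\mapsto\mathbf{SKx}$. Under (A2), $G$ is proper, lower semi-continuous and convex, and these three properties are preserved under precomposition with the linear operator $\mathbf{L}$, so $\mathcal{R}=G\circ\mathbf{L}$ is convex and lower semi-continuous; it is proper because $\mathrm{dom}\,\mathcal{R}=\mathbf{L}^{-1}(\mathrm{dom}\,G)$ is nonempty (for instance $\mathbf{0}_N\in\mathrm{dom}\,\mathcal{R}$ whenever $G(\mathbf{0})<+\infty$, as for the norm-type regularisers of interest). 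Being a sum of two such functions, with the fidelity finite-valued, $\mathcal{J}(\,\cdot\,;\mu)$ is then proper, lower semi-continuous and convex.

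The coercivity step is the crux, and I would argue it by contradiction. Suppose there is a sequence $(\mathbf{x}_k)$ with $\lVert\mathbf{x}_k\rVert_2\to\infty$ and $\mathcal{J}(\mathbf{x}_k;\mu)\le C$. By (A2) the coercive, convex, lower semi-continuous $G$ attains its infimum and is thus bounded below, so $\mathcal{R}$ is bounded below; since the fidelity is nonnegative, both summands are then uniformly bounded above along $(\mathbf{x}_k)$. Boundedness of the fidelity forces $(\mathbf{SKx}_k)$ to be bounded, while boundedness of $G(\mathbf{Lx}_k)$ together with coercivity of $G$ forces $(\mathbf{Lx}_k)$ to be bounded. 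I would then introduce the stacked operator $T\mathbf{x}:=(\mathbf{SKx},\mathbf{Lx})$: assumption (A1) states precisely that $\mathrm{null}(T)=\mathrm{null}(\mathbf{SK})\cap\mathrm{null}(\mathbf{L})=\{\mathbf{0}_N\}$, so $T$ is injective and, on the finite-dimensional space $\R^N$, bounded below, i.e. $\lVert T\mathbf{x}\rVert_2\ge c\lVert\mathbf{x}\rVert_2$ for some $c>0$. Since $\lVert T\mathbf{x}_k\rVert_2^2=\lVert\mathbf{SKx}_k\rVert_2^2+\lVert\mathbf{Lx}_k\rVert_2^2$ is bounded, $(\mathbf{x}_k)$ is bounded, contradicting $\lVert\mathbf{x}_k\rVert_2\to\infty$; hence $\mathcal{J}(\,\cdot\,;\mu)$ is coercive.

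Existence of a global minimiser then follows from the direct method: a proper, lower semi-continuous, coercive function on $\R^N$ has nonempty compact sublevel sets, so the infimum is attained. For the uniqueness clause, the key remark is that when $\mathbf{SK}$ has full (column) rank, i.e. is injective, the fidelity has positive-definite Hessian $\mu(\mathbf{SK})^{T}(\mathbf{SK})$ and is therefore strictly convex; adding the convex term $\mathcal{R}$ preserves strict convexity, and a strictly convex function has at most one minimiser. This reproduces the argument of Proposition~2.1 in \cite{etna}, the only change being the systematic replacement of the blur operator $\mathbf{K}$ there by the product $\mathbf{SK}$ here.
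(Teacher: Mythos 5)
Your proof is correct, and it coincides with the argument the paper itself points to: the paper does not write out a proof of this proposition, but states that it ``can be easily derived from the one of Proposition 2.1 in \cite{etna}'' (the deconvolution case) --- i.e., by systematically replacing $\mathbf{K}$ there with $\mathbf{SK}$, which is exactly what you did. The one step where (A1) genuinely enters, coercivity, is handled correctly via the stacked operator $T\mathbf{x} = (\mathbf{SKx},\mathbf{Lx})$: injectivity of $T$ on the finite-dimensional space $\R^N$ gives $\lVert T\mathbf{x}\rVert_2 \geq c\lVert\mathbf{x}\rVert_2$ with $c>0$, and boundedness of $\mathbf{SKx}_k$ (from the fidelity) and of $\mathbf{Lx}_k$ (from coercivity of $G$) then forces boundedness of $\mathbf{x}_k$. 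Your side remark that properness of $G\circ\mathbf{L}$ additionally needs $\mathrm{range}(\mathbf{L})\cap\mathrm{dom}\,G\neq\emptyset$ is a legitimate fine point, satisfied by all regularisers considered in the paper since they have $G(\mathbf{0})=0$.

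One caveat on the uniqueness clause, which concerns the statement rather than your proof. You read ``full rank'' as full \emph{column} rank, i.e.\ injectivity of $\mathbf{SK}$; this is indeed the only reading under which the claim holds, since under the full-row-rank reading (the conventional meaning of ``full rank'' for a wide matrix) uniqueness can fail: take $\mathbf{L}=\mathbf{I}_N$ and $G=\lVert\cdot\rVert_1$, a lasso-type problem whose minimiser need not be unique even when $\mathbf{SK}$ is surjective. Note, however, that in the genuine super-resolution setting $\mathbf{SK}\in\R^{n\times N}$ with $N=nd$ and $d>1$, so injectivity is impossible and the uniqueness clause is vacuous; it has content only when $d=1$, i.e.\ precisely the deconvolution setting of \cite{etna} from which the statement was imported. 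Your proof is the right one for the statement as intended, but it is worth being aware that its hypothesis can never be met when actual decimation is present.
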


Assumptions (A3)-(A4) are crucial for our proposal. In fact, as it will be detailed in the paper, they allow for the efficient automatic selection of the regularisation parameter in the frequency domain based on the RWP. More specifically, 
(A3) guarantees that, under the assumptions of periodic boundary conditions, the blur matrix $\mathbf{K}$ and the regularisation matrices $\mathbf{L}_j$ are all block-circulant matrices with circulant blocks, which can be diagonalised by the 2D discrete Fourier transform. Formally, we have:
\begin{equation}
\mathbf{K} \;{=}\; \mathbf{F}^H\mathbf{\Lambda}\mathbf{F},
\quad\;
\mathbf{L}_j = \mathbf{F}^H\mathbf{\Gamma}_j\mathbf{F}, \;\: j \in \{1,\ldots,s\}, \quad\;
\mathbf{F}^H\mathbf{F} = \mathbf{F}\mathbf{F}^H = \mathbf{I}_N\,,
\label{eq:DFTs_1}
\end{equation}
where ${\mathbf{\Lambda}},{\mathbf{\Gamma}}_j\in \mathbb{C}^{N \times N}$ are diagonal matrices defined by
\begin{equation}\label{eq:KL_diag}
\mathbf{\Lambda} \;{=}\; \mathrm{diag}\left(\tilde{\lambda}_1,\ldots,\tilde{\lambda}_n\right), 
\quad\;
{\mathbf{\Gamma}}_j \;{=}\; \mathrm{diag}\left(\tilde{\gamma}_{j,1},\ldots,\tilde{\gamma}_{j,N}\right),
\;\: j \in \{1,\ldots,s\}.
\end{equation}
Assumption (A4) allows to apply Lemma \ref{lem:FSSH} which, together with Fourier-diagonalisation formulas (\ref{eq:DFTs_1})-(\ref{eq:KL_diag}), in its turn allows to solve in the frequency domain the linear systems arising in the proposed iterative solution procedure.

Finally, assumption (A5) is only efficiency-oriented, in the sense that it allows to compute efficiently the solution of all the proximity operators arising in the solution algorithm.

We now briefly discuss how stringent the above assumptions are. First, assumption (A4) on $\mathbf{S}$ is not stringent at all. In fact, even if a down-sampling operator is assumed where decimation by the binary selection matrix $\mathbf{S}$ is preceded by the integration of the HR image over the support of each LR pixel, a \emph{pixel blur} operator - representing a 2D convolution operator - can be introduced and incorporated in the original blur matrix $\mathbf{K}$.




Then, among popular regularisation terms satisfying (A2),(A3),(A5), we mention the Tikhonov (TIK) regulariser, which is typically adopted when the image of interest is characterised by smooth features, and the Total Variation (TV) \cite{rof}, which is employed for the reconstruction of piece-wise constant images with sharp edges, and that admits an Isotropic (TVI) and Anisotrpic (TVA) formulation. They can be expressed in terms of the regularisation matrix $\mathbf{L}$ and of the nonlinear regularisation function $G(\mathbf{t}), \: \mathbf{t} = \mathbf{L} \mathbf{x}$, as follows:
\begin{align}
\mathbf{L} \;{=}\;\mathbf{D}, \qquad G(\mathbf{t}) =& \left\| \mathbf{t} \right\|_2^2, \qquad\qquad\qquad\qquad\quad\! \big[ \mathrm{TIK} \big] 
\label{eq:tik_reg} 
\\
\mathbf{L} \;{=}\; \mathbf{D}, \qquad G(\mathbf{t}) =& \sum_{i=1}^N \left\| \left(t_i,t_{i+N}\right) \right\|_2, \qquad\qquad \big[ \mathrm{TVI} \big]
\label{eq:tvi_reg}\\
\mathbf{L} \;{=}\; \mathbf{D}, \qquad G(\mathbf{t}) =& \sum_{i=1}^N \left\| \left(t_i,t_{i+N}\right) \right\|_1 , \qquad\qquad \big[ \mathrm{TVA} \big]
\label{eq:tva_reg}
\end{align}
where $\mathbf{D} :=
\left(\mathbf{D}_h^T,\mathbf{D}_v^T\right)^T {\in}\; \R^{2N \times N}$ with $\mathbf{D}_h,\mathbf{D}_v \in \R^{N \times N}$ finite difference operators discretising the first-order horizontal and vertical partial derivatives.

In presence of images characterised by different local features, the global nature of TIK and TV compromises their performance. As a way to promote regularisation with different strength over the image, in \cite{wtv} a class of weighted TV-based regularisation has been proposed; in formula:
\begin{equation}
\quad\:\mathbf{L} \;{=}\; \mathbf{D}, \qquad G(\mathbf{t}) = \sum_{i=1}^N \alpha_i \left\| \left(t_i,t_{i+N}\right) \right\|_2, \;\; \alpha_i \in \R_{++}, \qquad\;\;\: \big[ \mathrm{WTV} \big]
\label{eq:wtv_reg}
\end{equation}
Regardless of its local or global nature, a regularisation term which is designed by setting $\mathbf{L}=\mathbf{D}$ is expected to promote properties (e.g., sparsity) of the gradient of the sought image. However, when addressing sparse-recovery problems, i.e. problems in which the signal itself, and not its differential structure, is known - or expected - to be sparse, an $\ell_1$-based regularisation can be selected. Keeping the space-variant perspective, here we consider the Weighted $\ell_1$ (WL1) regulariser, which reads:
\begin{equation}
\quad\:\mathbf{L} \;{=}\; \mathbf{I}_N, \qquad G(\mathbf{t}) = \sum_{i=1}^N w_i |t_i|\,, \;\; w_i \in \R_{++}, \qquad\;\;\: \big[ \mathrm{WL1} \big]
\label{eq:wl1_reg}
\end{equation}
The weights $w_i$ are expected to be large where the signal is zero, and small in correspondence of the nonzero entries of $\mathbf{x}$.

\begin{remark}[Non-convex regularisations]
	Despite our convexity assumption (A2), we will detail in Section \ref{sec:NCsparse} few insights on the use of the proposed parameter selection strategy to non-convex regularisers used, for instance, in sparse recovery problems such as continuous approximations of the $\ell_0$ pseudo-norm, see, e.g., \cite{CEL0}. The iterative strategy we are going to discuss next does not directly apply to this scenario; nonetheless, upon the suitable definition of appropriate surrogate convex functions approximating the original non-convex problems (by means, for instance, of reweighted $\ell_1$ algorithms \cite{Ochs2015}), its use is still possible as a nested strategy associated to the solution of variable weighted $\ell_1$ problems.
\end{remark}

\section{The RWP for super-resolution}\label{sec:RWP}

In this section, we recall some key results originally reported in \cite{SSVM_whiteness2021} and concerning the application of the RWP to Tikhonov-regularised super-resolution least squares problems of the form \eqref{eq:l2l2}. In fact, what follows represents the building block of the iterative procedures introduced in Section~\ref{sec:RWPgen}.

\medskip

Let us consider the noise realisation $\mathbf{e}$ in (\ref{eq:lin_model}) in its original $n_r \times n_c$ matrix form:
\begin{equation}
\mathbf{e}  \,\;{=}\;\, \left\{ e_{i,j} \right\}_{(i,j) \in \mathrm{\Omega}}, \quad 
\mathrm{\Omega} \;{:=}\; \{ 0 , \,\ldots\, , n_r-1 \} \times \{ 0 , \,\ldots\, , n_c-1 \}.
\end{equation}
%
The \emph{sample auto-correlation} $a: \R^{n_r \times n_c} \to \R^{(2n_r-1) \times (2n_c-1)}$ of realisation $\mathbf{e}$ is
\begin{equation}
\label{eq:theta}
a(\mathbf{e}  ) {=} 
\left\{ a_{l,m}(\mathbf{e}  ) \right\}_{(l,m) \in \mathrm{\Theta}}, 
\, 
\mathrm{\Theta}{:=} \{ -(n_r -1) ,\ldots , n_r - 1 \} \times \{ -(n_c -1) , \ldots , n_c - 1 \},
\end{equation}
with each scalar component $a_{l,m}(\mathbf{e}  ): \R^{n_r \times n_c} \to \R$ given by

\begin{eqnarray}
a_{l,m}(\mathbf{e})
\;\;{=}\,&&
\frac{1}{n} \:
\big( \, \mathbf{e} \,\;{\star}\;\: \mathbf{e} \, \big)_{l,m}
\,{=}\;\;
\frac{1}{n} \: \big( \, \mathbf{e} \,\;{\ast}\;\: \mathbf{e}^{\prime} \, \big)_{l,m} \nonumber\\
\;\;{=}\,&&
\frac{1}{n} \!\!
\sum_{\;(i,j)\in\,\mathrm{\Omega}} \!
e_{i,j} \, e_{i+l,j+m} \, ,
\quad\; (l,m) \in \mathrm{\Theta} \, , 
\label{eq:n_ac}
\end{eqnarray}
where index pairs $(l,m)$ are commonly called \emph{lags}, $\,\star\:$ and $\,\ast\,$ denote the 2-D discrete correlation and convolution operators, respectively, and where
$\mathbf{e}  ^{\prime}(i,j) = \mathbf{e}  (-i,-j)$.
Clearly, for (\ref{eq:n_ac}) being defined for all lags $(l,m) \in \mathrm{\Theta}$,
the noise realisation $\mathbf{e}$ must be padded with at least
$n_r-1$ samples in the vertical direction and $n_c-1$ samples in the horizontal direction
by assuming periodic boundary conditions, such that $\,\star\:$ and $\,\ast\,$ in (\ref{eq:n_ac}) denote 2-D circular correlation
and convolution, respectively. This allows to consider only lags 
%
\begin{equation}
(l,m) \in \overline{\mathrm{\Theta}} 
\;{:=}\; \{ 0, \,\ldots\, , n_r - 1\} \times \{ 0 , \,\ldots\, , n_c - 1 \}.
\label{eq:Theta_bar}
\end{equation}

If the corruption $\mathbf{e}$ in (\ref{eq:lin_model}) is the realisation of a white Gaussian noise process - as in our case - it is well known that as $n\to + \infty$, the sample auto-correlation $a_{l,m}(\mathbf{e})$ satisfies the following asymptotic property \cite{LMSS}:
\begin{equation}
\lim_{n \to +\infty}
a_{l,m}(\mathbf{e})
\;  =
\left\{
\begin{array}{ll}
\sigma^2 &
\;\mathrm{for} \;\; (l,m) = (0,0) \vspace{0.15cm}\\
0 &
\;\mathrm{for} \;\; (l,m) \in \: \overline{\mathrm{\Theta}}_0 \;{:=}\; \overline{\mathrm{\Theta}} \;\,{\setminus}\: \{(0,0)\} \, .
\end{array}
\right. 
\label{EQ:LIM}
\end{equation}
We remark that the DP exploits only the information at lag $(0,0)$. In fact, the standard deviation recovered by the residual image is required to be equal to $\sigma$. Imposing whiteness of the restoration residual by constraining the residual auto-correlation at non-zero lags to be small is a much stronger requirement.

In order to render such whiteness principle completely independent of the noise level, we consider the \emph{normalised} sample auto-correlation of noise realisation $\mathbf{e}$, namely 
\begin{equation}
\rho(\mathbf{e}) 
\,\;{=}\;\, \frac{1}{a_{0,0}(\mathbf{e})} \, a(\mathbf{e}) 
\,\;{=}\;\, \frac{1}{\left\|\mathbf{e}\right\|_2^2} \, 
\big( \, \mathbf{e} \,\;{\star}\;\, \mathbf{e} \, \big) \,,
\label{eq:NAC_SP}
\end{equation}
where $\| \cdot \|_2$ denotes here the Frobenius norm.
It follows easily from (\ref{EQ:LIM}) that $\rho(\mathbf{e})$ satisfies the following asymptotic properties:
\begin{equation}
\lim_{n \to +\infty}
\rho_{l,m}(\mathbf{e})
\;  =
\left\{
\begin{array}{ll}
1 &
\;\mathrm{for} \;\; (l,m) = (0,0) \vspace{0.15cm}\\
0 &
\;\mathrm{for} \;\; (l,m) \in \: \overline{\mathrm{\Theta}}_0 \, .
\end{array}
\right. 
\label{EQ:LIM_2}
\end{equation}

In \cite{etna}, the authors introduce the following non-negative scalar measure of whiteness $\mathcal{W}: \R^{n_r \times n_c} \to \R_+$ of noise realisation $\mathbf{e}$:
\begin{equation}
\mathcal{W}(\mathbf{e}) \,\;{:=}\;\, 
\left\| \rho(\mathbf{e}) \right\|_2^2 
\,\;{=}\;\, 
\frac{\left\|\,\mathbf{e} \,\;{\star}\;\, \mathbf{e}\,\right\|_2^2}
{\left\|\mathbf{e}\right\|_2^4} 
\,\;{=}\;\,
\widetilde{\mathcal{W}}(\tilde{\mathbf{e}}) \, ,
\label{eq:NAC}
\end{equation}
%
where the last equality comes from Proposition \ref{prop:WFour} below - the proof being reported in \cite{etna} - with $\tilde{\mathbf{e}} \in \mathbb{C}^{n_r \times n_c}$ the 2D discrete Fourier transform of $\mathbf{e}$ and
$\widetilde{\mathcal{W}}: \mathbb{C}^{n_r \times n_c} \to \R_+$ the function defined in (\ref{eq:WFour}).

\begin{proposition}
	\label{prop:WFour}
	Let $\mathbf{e} \in \R^{n_r \times n_c}$ and $\tilde{\mathbf{e}} = \mathbf{F}\,\mathbf{e} \in \mathbb{C}^{n_r \times n_c}$. Then, assuming periodic boundary conditions for $\mathbf{e}$, the function $\mathcal{W}$ defined in (\ref{eq:NAC}) satisfies:
	\begin{equation}
	\mathcal{W}(\mathbf{e}) 
	\,\;{=}\;\, 
	\widetilde{\mathcal{W}}(\tilde{\mathbf{e}})
	\,\;{:=}\;\:\, 
	\displaystyle{
		\left(
		\sum_{(l,m) \in \overline{\mathrm{\Theta}}} 
		\left| \tilde{e}_{l,m} \right|^4
		\right)} \Big/ 
	\displaystyle{
		\left(
		\sum_{(l,m) \in \overline{\mathrm{\Theta}}}
		\left| \tilde{e}_{l,m} \right|^2
		\right)^2
	}\,.
	\label{eq:WFour}
	\end{equation}
\end{proposition}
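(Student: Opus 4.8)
The plan is to derive the identity $\mathcal{W}(\mathbf{e}) = \widetilde{\mathcal{W}}(\tilde{\mathbf{e}})$ from two classical tools for circular signals: the discrete (circular) Wiener--Khinchin theorem and Parseval's identity, the latter being available precisely because the $2$D transform $\mathbf{F}$ is unitary, $\mathbf{F}^H\mathbf{F}=\mathbf{I}$. The starting observation is that, once periodic boundary conditions are imposed, the sample auto-correlation in (\ref{eq:n_ac})--(\ref{eq:Theta_bar}) is a \emph{circular} correlation, and by definition of the reflection $\mathbf{e}'$ it can be written as a circular convolution $\mathbf{e}\star\mathbf{e} = \mathbf{e}\ast\mathbf{e}'$. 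Both $\mathbf{e}$ and $\mathbf{e}\star\mathbf{e}$ are thus $n_r\times n_c$ periodic arrays indexed by the fundamental lag set $\overline{\mathrm{\Theta}}$, which is exactly the grid on which $\mathbf{F}$ acts.

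First I would apply the $2$D circular correlation theorem to $\mathbf{e}\star\mathbf{e} = \mathbf{e}\ast\mathbf{e}'$. Since $\mathbf{e}$ is real, the transform of its reflection satisfies $\mathbf{F}\mathbf{e}' = \overline{\tilde{\mathbf{e}}}$, so that the transform of the auto-correlation is, up to the fixed normalisation constant induced by the chosen DFT convention, the entrywise squared modulus $\overline{\tilde{\mathbf{e}}}\odot\tilde{\mathbf{e}} = |\tilde{\mathbf{e}}|^2$, i.e. the power spectrum of $\mathbf{e}$. Next I would invoke Parseval's identity twice: once on the numerator of (\ref{eq:NAC}), to rewrite $\|\mathbf{e}\star\mathbf{e}\|_2^2$ as the frequency-domain energy of $|\tilde{\mathbf{e}}|^2$, namely $\sum_{(l,m)\in\overline{\mathrm{\Theta}}}|\tilde{e}_{l,m}|^4$; and once on the factor $\|\mathbf{e}\|_2^2$ appearing in the denominator, to obtain $\|\mathbf{e}\|_2^2=\sum_{(l,m)\in\overline{\mathrm{\Theta}}}|\tilde{e}_{l,m}|^2$, whence $\|\mathbf{e}\|_2^4 = \big(\sum_{(l,m)\in\overline{\mathrm{\Theta}}}|\tilde{e}_{l,m}|^2\big)^2$.

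Forming the quotient then yields the closed form (\ref{eq:WFour}): the numerator supplies the sum of fourth powers and the denominator the square of the sum of second powers, while the multiplicative constants produced by the correlation theorem and by Parseval combine into a single dimensional factor that is either absorbed by the DFT normalisation or is a global constant immaterial to the subsequent minimisation of $\mathcal{W}$ in $\mu$. The step I expect to require the most care is this normalisation bookkeeping, together with the reduction from the full lag set $\mathrm{\Theta}$ of size $(2n_r-1)\times(2n_c-1)$ to the fundamental domain $\overline{\mathrm{\Theta}}$: one must check that, under periodic extension, the circular auto-correlation is $n$-periodic and hence completely described by its values on $\overline{\mathrm{\Theta}}$, so that both the spatial Frobenius norms and the frequency-domain sums legitimately range over the same $n_r\times n_c$ grid on which $\mathbf{F}$ is unitary.
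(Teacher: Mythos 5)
Your route is the right one, and in fact it is the paper's route: this paper does not prove Proposition~\ref{prop:WFour} itself (the proof is deferred to \cite{etna}), but the argument there is exactly the one you give — write $\mathbf{e}\star\mathbf{e}=\mathbf{e}\ast\mathbf{e}'$ as a circular operation, apply the circular correlation (Wiener--Khinchin) theorem to turn it into the power spectrum $|\tilde{\mathbf{e}}|^2$, and invoke Parseval on both the numerator and the denominator; the same two tools are what this paper uses in the appendix to prove the generalised super-resolution version, Proposition~\ref{prop:w_sr}. One correction to your bookkeeping, on the step you yourself flagged as delicate: the residual dimensional constant can \emph{not} be absorbed by the choice of DFT normalisation, because the ratio defining $\widetilde{\mathcal{W}}$ is invariant under rescaling $\tilde{\mathbf{e}}\mapsto c\,\tilde{\mathbf{e}}$, so it reads the same under every convention; carrying the constants through with the paper's unitary convention $\mathbf{F}^H\mathbf{F}=\mathbf{I}_N$ one actually finds $\mathcal{W}(\mathbf{e})=n\,\widetilde{\mathcal{W}}(\tilde{\mathbf{e}})$ (e.g., for $n=2$, $\mathbf{e}=(a,b)$, one has $\mathcal{W}=\left((a^2+b^2)^2+4a^2b^2\right)/(a^2+b^2)^2$ while $\widetilde{\mathcal{W}}$ is half of that). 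Your fallback justification is the correct one: the factor $n$ is a fixed constant independent of the residual, hence of $\mu$, so it is immaterial for the minimisation in \eqref{eq:prob_lambda}, and this ``up to a fixed positive constant'' reading is the sense in which the identity \eqref{eq:WFour} is stated and used throughout the paper.
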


By now looking at the considered class of super-resolution variational models (\ref{eq:models}), we observe that, as a general rule, the closer the attained super-resolved image $\mathbf{x}^*(\mu)$ to the target HR image $\mathbf{x}$, the closer the associated residual image $\mathbf{r}^*(\mu) = \mathbf{SKx}^*(\mu) - \mathbf{b}$ to the white noise realisation $\mathbf{e}$ in (\ref{eq:lin_model}) and, hence, the whiter the residual image according to the scalar measure in (\ref{eq:NAC}).


This motivates the application of the RWP for automatically selecting the regularisation parameter $\mu$ in variational models of the form (\ref{eq:models}), which reads:
\begin{equation}
\text{select}\;\;\mu=\mu^*\;\text{  s.t.  }\;\,
\mu^* \:{\in}\: 
\argmin_{\mu \in \R_{++}} 
\big\{\,
\!W(\mu) \;{:=}\; \mathcal{W}\left(\mathbf{r}^*(\mu)\right)
\big\} \,,
\label{eq:prob_lambda}
\end{equation}
where, according to the definition of function $\mathcal{W}$ in (\ref{eq:NAC})-(\ref{eq:WFour}),  
the scalar non-negative cost function $W: \R_{++} \to \R_+$ in (\ref{eq:prob_lambda}), from now on referred to as the \emph{residual whiteness function}, takes the following form:
\begin{equation}
W(\mu) \,\;{=}\;\, 
\frac{\left\| \, \mathbf{r}^*(\mu) \,\;{\star}\;\, \mathbf{r}^*(\mu) \, \right\|_2^2}{ \left\|\mathbf{r}^*(\mu)\right\|_2^4}
\,\;{=}\;\,
\widetilde{\mathcal{W}}\left(\widetilde{\mathbf{r}^*(\mu)}\right)\,,
\label{eq:Wfun_freq_a}
\end{equation}
with $\widetilde{\mathbf{r}^*(\mu)}$ the 2D discrete transform of the residual image and function $\widetilde{W}$ defined in \eqref{eq:WFour}.
In \cite{SSVM_whiteness2021}, the authors derived an explicit expression for the super-resolution residual whiteness function $W(\mu)$ in \eqref{eq:Wfun_freq_a} in the frequency domain which generalises the one for the restoration-only case (i.e., the case where $\mathbf{S}=\mathbf{I}_N$) reported in \cite{etna}. The results of derivations in \cite{SSVM_whiteness2021} are summarised in the following proposition, whose proof is postponed to the appendix.
\begin{proposition}\label{prop:w_sr}
	Let $\mathbf{r}_H(\mu):=\mathbf{K x}(\mu)-\mathbf{b}_H$, with $\mathbf{b}_H=\mathbf{S}^H\mathbf{b}$, be the high-resolution residual image. We have:
	\begin{equation}
	\label{eq:white_new}
	W(\mu) = \left(\sum_{i=1}^N w_i(\mu)^4\right) /\left(\sum_{i=1}^N w_i(\mu)^2\right)^2\,,\; w_i(\mu)=\left|\sum_{\ell=0}^{d-1}(\hat{\mathbf{r}}_H(\mu))_{\iota + \ell}\right|\,,
	\end{equation}
	where
	\begin{equation}
	\hat{\mathbf{r}}_H(\mu) = \mathbf{P}\tilde{\mathbf{r}}_H(\mu)\quad\text{and}\quad \iota:=1+ \Big\lfloor \frac{i-1}{d}\Big\rfloor d\,,
	\end{equation}
	with $\mathbf{P}\in\R^{N \times N}$ being a suitably designed permutation matrix.
\end{proposition}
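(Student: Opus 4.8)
The plan is to start from the spectral form of the whiteness functional supplied by Proposition~\ref{prop:WFour} and then to rewrite the low-resolution spectrum $\widetilde{\mathbf{r}^*(\mu)}$ in terms of the high-resolution one $\tilde{\mathbf{r}}_H(\mu)$, the bridge between the two being exactly the frequency folding (aliasing) induced by the decimation $\mathbf{S}$. First I would note that, by assumption (A4), $\mathbf{SS}^H=\mathbf{I}_n$, so that with $\mathbf{b}_H=\mathbf{S}^H\mathbf{b}$ one gets $\mathbf{S}\,\mathbf{r}_H(\mu)=\mathbf{SKx}(\mu)-\mathbf{SS}^H\mathbf{b}=\mathbf{SKx}(\mu)-\mathbf{b}=\mathbf{r}^*(\mu)$; hence the low-resolution residual is simply the decimation of the high-resolution one. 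Applying Proposition~\ref{prop:WFour} to the $n_r\times n_c$ image $\mathbf{r}^*(\mu)$ then reduces the whole statement to understanding the $n$ moduli $|\widetilde{\mathbf{r}^*(\mu)}_k|$ of its Fourier coefficients.

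The core step is to analyse the operator $\mathbf{G}:=\mathbf{F}\mathbf{S}\mathbf{F}^H$ (with $\mathbf{F}$ the unitary transform of the appropriate size) that sends the $N$ high-resolution frequencies onto the $n$ low-resolution ones, so that $\widetilde{\mathbf{r}^*(\mu)}=\mathbf{G}\,\tilde{\mathbf{r}}_H(\mu)$. I would compute $\mathbf{G}^H\mathbf{G}=\mathbf{F}\mathbf{S}^H\mathbf{S}\mathbf{F}^H=\widetilde{\mathbf{S}^H\mathbf{S}}$ and invoke Lemma~\ref{lem:FSSH} to get $\widetilde{\mathbf{S}^H\mathbf{S}}=\frac{1}{d}(\mathbf{J}_{d_r}\otimes\mathbf{I}_{n_r})\otimes(\mathbf{J}_{d_c}\otimes\mathbf{I}_{n_c})$. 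Using Lemma~\ref{lem:kron} this matrix is seen to be idempotent, hence an orthogonal projector, and its Kronecker structure shows that it couples precisely the $d=d_rd_c$ frequencies aliasing onto the same low-resolution frequency. The role of the permutation $\mathbf{P}$ is then to reorder the coefficients so that these $d$ aliases become consecutive: concretely, $\mathbf{P}\,\widetilde{\mathbf{S}^H\mathbf{S}}\,\mathbf{P}^T=\frac{1}{d}\,\mathbf{I}_n\otimes\mathbf{J}_d$ is block-diagonal with $n$ identical rank-one blocks $\frac{1}{d}\mathbf{J}_d$, the $k$-th block acting on the entries $\iota,\iota+1,\ldots,\iota+d-1$ of $\hat{\mathbf{r}}_H(\mu)=\mathbf{P}\tilde{\mathbf{r}}_H(\mu)$, with $\iota=1+\lfloor(i-1)/d\rfloor d$.

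From the action of $\mathbf{G}$ I would then read off that each low-resolution coefficient is, up to the factor $1/\sqrt{d}$, the sum of the $d$ aliasing high-resolution coefficients, so that $|\widetilde{\mathbf{r}^*(\mu)}_k|=\frac{1}{\sqrt{d}}\,w_i(\mu)$ with $w_i(\mu)=\big|\sum_{\ell=0}^{d-1}(\hat{\mathbf{r}}_H(\mu))_{\iota+\ell}\big|$. The cleanest way to make this rigorous is to factor the 2D transform and the decimation as Kronecker products of their one-dimensional counterparts (Lemma~\ref{lem:kron}) and to evaluate the elementary one-dimensional Fourier–decimation operator, whose rows sum the $d_r$ (resp. $d_c$) aliases with weight $1/\sqrt{d_r}$ (resp. $1/\sqrt{d_c}$). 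Substituting this relation into the quotient of Proposition~\ref{prop:WFour}, writing the spectral sums over the $n$ low-resolution frequencies, the overall power of $1/\sqrt{d}$ cancels because $\mathcal{W}$ is invariant under global rescaling, and the announced expression for $W(\mu)$ follows.

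I would expect the main obstacle to be the precise index bookkeeping of the second step: verifying that $\mathbf{P}$ indeed maps the $d$ aliasing frequencies to consecutive positions and that $\mathbf{P}\,\widetilde{\mathbf{S}^H\mathbf{S}}\,\mathbf{P}^T=\frac{1}{d}\mathbf{I}_n\otimes\mathbf{J}_d$, keeping track of the row-major vectorisation and of the separate row/column factors $d_r,d_c$. A minor point to flag in the same vein is the summation range: the derivation naturally produces one term per group, whereas the stated formula sums over all $i=1,\ldots,N$ with each $w_i$ constant on its group; since $\mathcal{W}$ is scale-invariant and the resulting $\mu$-independent combinatorial factor does not affect the minimiser in \eqref{eq:prob_lambda}, this is immaterial for the intended use, but it should be stated explicitly. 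Once the combinatorial identification is in place, the remaining computation is the routine cancellation of constants in the scale-invariant quotient.
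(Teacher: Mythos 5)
Your proposal is correct, but it follows a genuinely different route from the paper's own proof. The paper never forms the low-resolution spectrum at all: it embeds the residual back into the high-resolution domain via $\|\mathbf{r}^*(\mu)\|_2 = \|\mathbf{S}^H\mathbf{S}\,\mathbf{r}_H^*(\mu)\|_2$ and, crucially, $\|\mathbf{r}^*(\mu)\star\mathbf{r}^*(\mu)\|_2 = \|(\mathbf{S}^H\mathbf{S}\mathbf{r}_H^*(\mu))\star(\mathbf{S}^H\mathbf{S}\mathbf{r}_H^*(\mu))\|_2$ (zero interpolation preserves both the norm and the norm of the circular autocorrelation), then applies the $N$-point Parseval identity and the convolution theorem, so that everything is expressed through $\mathbf{F}\mathbf{S}^H\mathbf{S}\mathbf{F}^H$; Lemma~\ref{lem:FSSH} and the permutation \eqref{eq:permutation} turn this into $\frac{1}{d}(\mathbf{I}_n\otimes\mathbf{J}_d)$, the group sums $w_i(\mu)$ arise from the action of $\mathbf{I}_n\otimes\mathbf{J}_d$, and the resulting $1/d^4$ constants cancel between numerator and denominator. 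You instead apply Proposition~\ref{prop:WFour} directly to the $n_r\times n_c$ residual and rely on the explicit folding (aliasing) operator $\mathbf{G}=\mathbf{F}\mathbf{S}\mathbf{F}^H$, whose rows sum the $d$ aliases with weight $1/\sqrt{d}$. Two remarks on your plan. First, the explicit one-dimensional computation of $\mathbf{G}$ that you propose is genuinely needed and cannot be bypassed: since the whiteness functional involves fourth powers of the moduli, it is not invariant under left multiplication by a unitary, so knowing only $\mathbf{G}^H\mathbf{G}=\widetilde{\mathbf{S}^H\mathbf{S}}$ (which determines $\mathbf{G}$ up to a unitary factor) would not suffice; in your argument the $\mathbf{G}^H\mathbf{G}$ computation correctly serves only to identify the alias classes with the consecutive index groups under $\mathbf{P}$. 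Second, you are right to flag the summation range: your derivation naturally yields one term per alias group ($n$ terms), whereas \eqref{eq:white_new} sums over all $i=1,\ldots,N$ with each $w_i$ repeated $d$ times, so the two expressions differ by the $\mu$-independent factor $1/d$ --- a discrepancy of the same harmless kind as the DFT normalisation constants that the paper's own chains of equalities silently drop in applying the convolution theorem; none of this affects the minimiser in \eqref{eq:prob_lambda}. In exchange for the extra folding computation, your route gives the more transparent signal-processing picture and avoids the paper's auxiliary argument that zero-interpolation preserves autocorrelation norms.
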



\subsection{RWP for $\ell_2$-$\ell_2$ problems in the form \eqref{eq:l2l2}}
\label{sec:rwpl2l2}
Here, we derive the analytical expression of the whiteness function $W(\mu)$ defined in \eqref{eq:white_new} when addressing Tikhonov-regularised least squares problems as the one in \eqref{eq:l2l2}. 
More specifically, following the derivations reported in \cite{FSR}, in Proposition \ref{prop:x} we give an efficiently computable expression for the solution $\mathbf{x}^*(\mu)$ of the general $\ell_2$-$\ell_2$ variational model.

\begin{proposition}
	\label{prop:x}
	Let
	\begin{equation}
	\label{eq:lam_bar_prop}
	\underline{\mathbf{\Lambda}} := \left(\mathbf{I}_n\otimes \mathbf{1}_d^T\right)\mathbf{P\Lambda}\,,  \qquad
	\underline{\mathbf{\Lambda}}^H := \mathbf{\Lambda}^H\mathbf{P}^T\left(\mathbf{I}_n\otimes \mathbf{1}_d\right)
	\end{equation}
	and
	\begin{equation}
	\mathbf{\Psi} = \left(\sum_{j=1}^s\mathbf{\Gamma}_j^H \mathbf{\Gamma}_j + \epsilon\right),\qquad 0 < \epsilon \ll 1\,,
	\end{equation}
	with $\epsilon$ guaranteeing the inversion of $\sum_{j=1}^s\mathbf{\Gamma}_j^H \mathbf{\Gamma}_j$.
	The solution of the Tikhonov-regularised least squares problem can be expressed as:
	\begin{equation}
	\!\!\!\!\mathbf{x}^*(\mu) 
	{=}\mathbf{F}^H\left[\mathbf{\Psi}{-}\mu\mathbf{\Psi}\underline{\mathbf{\Lambda}}^H\left(d\mathbf{I}{+}\mu\underline{\mathbf{\Lambda}}\mathbf{\Psi}\underline{\mathbf{\Lambda}}^H\right)^{-1}\underline{\mathbf{\Lambda}}\mathbf{\Psi}\right]\left(\mu \mathbf{\Lambda}^H\tilde{\mathbf{b}}_H {+}\sum_{j=1}^s\mathbf{\Gamma}_j^H\tilde{\mathbf{v}}_j\right), \label{eq:x_sol_2}
	\end{equation}
	where $\mathbf{v}=(\mathbf{v}_1^T,\ldots,\mathbf{v}_s^T)^T$\,.
\end{proposition}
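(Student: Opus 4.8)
The plan is to treat \eqref{eq:l2l2} as a strictly convex quadratic, write down its normal equations, diagonalise every convolution operator with $\mathbf{F}$, eliminate the decimation operator through Lemma~\ref{lem:FSSH}, and finally invert the resulting linear system cheaply with the Woodbury identity of Lemma~\ref{lem:woodbury}.

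First I would impose the first-order optimality condition. Under (A1) the Hessian $\mu\,\mathbf{K}^H\mathbf{S}^H\mathbf{S}\mathbf{K}+\mathbf{L}^H\mathbf{L}$ is positive definite, so the minimiser is unique and solves the normal equations
\begin{equation}
\left(\mu\,\mathbf{K}^H\mathbf{S}^H\mathbf{S}\mathbf{K} + \sum_{j=1}^s\mathbf{L}_j^H\mathbf{L}_j\right)\mathbf{x}^*(\mu) = \mu\,\mathbf{K}^H\mathbf{S}^H\mathbf{b} + \sum_{j=1}^s\mathbf{L}_j^H\mathbf{v}_j ,
\end{equation}
where I have used $\mathbf{L}^H\mathbf{L}=\sum_j\mathbf{L}_j^H\mathbf{L}_j$, $\mathbf{L}^H\mathbf{v}=\sum_j\mathbf{L}_j^H\mathbf{v}_j$ and $\mathbf{S}^H\mathbf{b}=\mathbf{b}_H$. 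I would then move to the frequency domain: left-multiplying by $\mathbf{F}$ and inserting $\mathbf{F}^H\mathbf{F}=\mathbf{I}_N$ between every pair of operators, the diagonalisations \eqref{eq:DFTs_1}--\eqref{eq:KL_diag} turn each $\mathbf{F}\mathbf{L}_j^H\mathbf{L}_j\mathbf{F}^H$ into $\mathbf{\Gamma}_j^H\mathbf{\Gamma}_j$ and the data block into $\mathbf{\Lambda}^H\big(\mathbf{F}\mathbf{S}^H\mathbf{S}\mathbf{F}^H\big)\mathbf{\Lambda}=\mathbf{\Lambda}^H\,\widetilde{\mathbf{S}^H\mathbf{S}}\,\mathbf{\Lambda}$, while the right-hand side becomes exactly $\mu\mathbf{\Lambda}^H\tilde{\mathbf{b}}_H+\sum_j\mathbf{\Gamma}_j^H\tilde{\mathbf{v}}_j$, i.e.\ the trailing vector factor of \eqref{eq:x_sol_2}.

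The decisive step is the treatment of $\mathbf{\Lambda}^H\,\widetilde{\mathbf{S}^H\mathbf{S}}\,\mathbf{\Lambda}$, where assumption (A4) and Lemma~\ref{lem:FSSH} enter. The permutation $\mathbf{P}$ is chosen to gather the $d$ mutually aliased frequencies into consecutive blocks, so that Lemma~\ref{lem:FSSH} yields $\mathbf{P}\,\widetilde{\mathbf{S}^H\mathbf{S}}\,\mathbf{P}^T=\tfrac{1}{d}\,\mathbf{I}_n\otimes\mathbf{J}_d$. Writing $\mathbf{J}_d=\mathbf{1}_d\mathbf{1}_d^T$ and invoking Lemma~\ref{lem:kron} gives $\mathbf{I}_n\otimes\mathbf{J}_d=(\mathbf{I}_n\otimes\mathbf{1}_d)(\mathbf{I}_n\otimes\mathbf{1}_d^T)$, hence
\begin{equation}
\mathbf{\Lambda}^H\,\widetilde{\mathbf{S}^H\mathbf{S}}\,\mathbf{\Lambda} = \frac{1}{d}\,\mathbf{\Lambda}^H\mathbf{P}^T(\mathbf{I}_n\otimes\mathbf{1}_d)(\mathbf{I}_n\otimes\mathbf{1}_d^T)\mathbf{P}\mathbf{\Lambda} = \frac{1}{d}\,\underline{\mathbf{\Lambda}}^H\underline{\mathbf{\Lambda}}
\end{equation}
by the very definition \eqref{eq:lam_bar_prop} of $\underline{\mathbf{\Lambda}}$ and $\underline{\mathbf{\Lambda}}^H$. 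The Fourier normal equations then have coefficient matrix $\mathbf{\Psi}^{-1}+\tfrac{\mu}{d}\,\underline{\mathbf{\Lambda}}^H\underline{\mathbf{\Lambda}}$, where the diagonal regulariser block $\sum_j\mathbf{\Gamma}_j^H\mathbf{\Gamma}_j$ (rendered invertible by $\epsilon$) is written $\mathbf{\Psi}^{-1}$ so as to match the notation of \eqref{eq:x_sol_2}, and the rank-$n$ term comes from the data.

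Finally, since $\underline{\mathbf{\Lambda}}\in\mathbb{C}^{n\times N}$ makes $\tfrac{\mu}{d}\underline{\mathbf{\Lambda}}^H\underline{\mathbf{\Lambda}}$ a rank-$n$ perturbation of the diagonal $\mathbf{\Psi}^{-1}$, I would invert the coefficient matrix with Lemma~\ref{lem:woodbury}, choosing $\mathbf{A}_1=\mathbf{\Psi}^{-1}$, $\mathbf{A}_2=\underline{\mathbf{\Lambda}}^H$, $\mathbf{A}_3=\tfrac{\mu}{d}\mathbf{I}_n$ and $\mathbf{A}_4=\underline{\mathbf{\Lambda}}$. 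This collapses the $N\times N$ inversion into the $n\times n$ inversion of $\tfrac{d}{\mu}\mathbf{I}_n+\underline{\mathbf{\Lambda}}\mathbf{\Psi}\underline{\mathbf{\Lambda}}^H$; extracting the scalar $\mu$ from this inner inverse produces both the $\mu$ prefactor and the matrix $d\mathbf{I}_n+\mu\,\underline{\mathbf{\Lambda}}\mathbf{\Psi}\underline{\mathbf{\Lambda}}^H$ of the bracket in \eqref{eq:x_sol_2}, after which left-multiplication by $\mathbf{F}^H$ returns $\mathbf{x}^*(\mu)$. The main obstacle is the decisive step above: establishing the block-diagonalisation $\mathbf{P}\,\widetilde{\mathbf{S}^H\mathbf{S}}\,\mathbf{P}^T=\tfrac1d\,\mathbf{I}_n\otimes\mathbf{J}_d$ and tracking the permutation carefully enough that the outer-product factorisation lands exactly on the definitions of $\underline{\mathbf{\Lambda}}$ and $\underline{\mathbf{\Lambda}}^H$; once this is in place, the Woodbury bookkeeping and the scalar manipulations are routine.
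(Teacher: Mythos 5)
Your proof is correct and takes essentially the same route as the paper's own proof: first-order optimality conditions, Fourier diagonalisation of $\mathbf{K}$ and the $\mathbf{L}_j$, Lemma~\ref{lem:FSSH} combined with the permutation $\mathbf{P}$ to rewrite the aliasing term $\mathbf{\Lambda}^H(\mathbf{F}\mathbf{S}^H\mathbf{S}\mathbf{F}^H)\mathbf{\Lambda}$ as $\frac{1}{d}\underline{\mathbf{\Lambda}}^H\underline{\mathbf{\Lambda}}$, and finally the Woodbury identity of Lemma~\ref{lem:woodbury} to collapse the $N\times N$ inversion into the $n\times n$ one appearing in \eqref{eq:x_sol_2}. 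The only differences are expository, and in your favour: you make explicit the factorisation $\mathbf{I}_n\otimes\mathbf{J}_d=(\mathbf{I}_n\otimes\mathbf{1}_d)(\mathbf{I}_n\otimes\mathbf{1}_d^T)$ via Lemma~\ref{lem:kron} and the Woodbury block choices $\mathbf{A}_1=\mathbf{\Psi}^{-1}$, $\mathbf{A}_2=\underline{\mathbf{\Lambda}}^H$, $\mathbf{A}_3=\frac{\mu}{d}\mathbf{I}_n$, $\mathbf{A}_4=\underline{\mathbf{\Lambda}}$, which the paper compresses into ``proceeding as in \cite{FSR}''.
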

From \eqref{eq:x_sol_2}, we can easily derive the Fourier transform of the high resolution residual $\mathbf{r}_H^*(\mu)=\mathbf{Kx^*}(\mu)-\mathbf{b}$, that takes the form
\begin{align}\label{eq:FRH_2}
\begin{split}
\tilde{\mathbf{r}}^*_H(\mu) 
%
=&\mathbf{\Lambda}\left[\mathbf{\Psi}-\mu\mathbf{\Psi}\underline{\mathbf{\Lambda}}^H\left(d\mathbf{I}+\mu\underline{\mathbf{\Lambda}}\mathbf{\Psi}\underline{\mathbf{\Lambda}}^H\right)^{-1}\underline{\mathbf{\Lambda}}\mathbf{\Psi}\right]\Bigg(\mu \mathbf{\Lambda}^H\tilde{\mathbf{b}}_H \\
&+\sum_{j=1}^s\mathbf{\Gamma}_j^H\tilde{\mathbf{v}}_j\Bigg)-\tilde{\mathbf{b}}_H\,,
\end{split}
\end{align}
Recalling Lemma \ref{lem:kron} and the property \eqref{eq:permutation}, we prove the following proposition and corollary.
\begin{proposition}\label{lem:new}
	Let $\mathbf{\Phi}\in\R^{n\times n}$ be a diagonal matrix and consider the matrix $\underline{\mathbf{\Lambda}}$ defined in \eqref{eq:lam_bar}. Then, the following equality holds:
	\begin{equation}\label{eq:dec}
	\underline{\mathbf{\Lambda}}^H \mathbf{\Phi} \underline{\mathbf{\Lambda}} = \mathbf{P}^T(\mathbf{\Phi}\otimes\mathbf{I}_d)\mathbf{P}\underline{\mathbf{\Lambda}}^H\underline{\mathbf{\Lambda}}\,.
	\end{equation}
\end{proposition}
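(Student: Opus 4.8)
The plan is to factor the permutation matrix out of the products so that the computation reduces to manipulating genuinely diagonal matrices, for which commutativity can be freely exploited. First I would use the orthogonality of $\mathbf{P}$ (property \eqref{eq:permutation}, $\mathbf{P}^T\mathbf{P}=\mathbf{P}\mathbf{P}^T=\mathbf{I}_N$) to write $\mathbf{P}\mathbf{\Lambda}=\hat{\mathbf{\Lambda}}\mathbf{P}$, where $\hat{\mathbf{\Lambda}}:=\mathbf{P}\mathbf{\Lambda}\mathbf{P}^T$ is \emph{diagonal}, being obtained by permuting the diagonal entries of the diagonal matrix $\mathbf{\Lambda}$. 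Setting $\mathbf{B}:=\mathbf{I}_n\otimes\mathbf{1}_d^T$, the definition of $\underline{\mathbf{\Lambda}}$ then rewrites compactly as $\underline{\mathbf{\Lambda}}=\mathbf{B}\hat{\mathbf{\Lambda}}\mathbf{P}$ and $\underline{\mathbf{\Lambda}}^H=\mathbf{P}^T\hat{\mathbf{\Lambda}}^H\mathbf{B}^T$, with $\mathbf{B}^T=\mathbf{I}_n\otimes\mathbf{1}_d$.

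Next I would compute the left-hand side. Substituting the factorisations gives $\underline{\mathbf{\Lambda}}^H\mathbf{\Phi}\underline{\mathbf{\Lambda}}=\mathbf{P}^T\hat{\mathbf{\Lambda}}^H(\mathbf{B}^T\mathbf{\Phi}\mathbf{B})\hat{\mathbf{\Lambda}}\mathbf{P}$, so the core sub-calculation is the inner sandwich $\mathbf{B}^T\mathbf{\Phi}\mathbf{B}$. Writing $\mathbf{\Phi}=\mathbf{\Phi}\otimes 1$ and applying the Kronecker mixed-product rule of Lemma \ref{lem:kron} twice yields $\mathbf{B}^T\mathbf{\Phi}\mathbf{B}=(\mathbf{I}_n\otimes\mathbf{1}_d)\mathbf{\Phi}(\mathbf{I}_n\otimes\mathbf{1}_d^T)=\mathbf{\Phi}\otimes(\mathbf{1}_d\mathbf{1}_d^T)=\mathbf{\Phi}\otimes\mathbf{J}_d$, with $\mathbf{J}_d$ the $d\times d$ matrix of ones. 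Hence
$$\underline{\mathbf{\Lambda}}^H\mathbf{\Phi}\underline{\mathbf{\Lambda}}=\mathbf{P}^T\hat{\mathbf{\Lambda}}^H(\mathbf{\Phi}\otimes\mathbf{J}_d)\hat{\mathbf{\Lambda}}\mathbf{P},$$
and the choice $\mathbf{\Phi}=\mathbf{I}_n$ gives the special case $\underline{\mathbf{\Lambda}}^H\underline{\mathbf{\Lambda}}=\mathbf{P}^T\hat{\mathbf{\Lambda}}^H(\mathbf{I}_n\otimes\mathbf{J}_d)\hat{\mathbf{\Lambda}}\mathbf{P}$, which I would retain for treating the right-hand side.

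Finally, substituting this expression for $\underline{\mathbf{\Lambda}}^H\underline{\mathbf{\Lambda}}$ into the right-hand side and cancelling the central $\mathbf{P}\mathbf{P}^T=\mathbf{I}_N$ reduces the claimed identity, after stripping the outer invertible factors $\mathbf{P}^T(\cdots)\mathbf{P}$, to the inner equality $\hat{\mathbf{\Lambda}}^H(\mathbf{\Phi}\otimes\mathbf{J}_d)\hat{\mathbf{\Lambda}}=(\mathbf{\Phi}\otimes\mathbf{I}_d)\hat{\mathbf{\Lambda}}^H(\mathbf{I}_n\otimes\mathbf{J}_d)\hat{\mathbf{\Lambda}}$. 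This is where the main (and essentially only) obstacle lies, and it is resolved by the key observation that $\mathbf{\Phi}\otimes\mathbf{I}_d$ is a diagonal $N\times N$ matrix (a Kronecker product of diagonal matrices), so it commutes with the diagonal matrix $\hat{\mathbf{\Lambda}}^H$; moving it past $\hat{\mathbf{\Lambda}}^H$ and then collapsing $(\mathbf{\Phi}\otimes\mathbf{I}_d)(\mathbf{I}_n\otimes\mathbf{J}_d)=\mathbf{\Phi}\otimes\mathbf{J}_d$ via Lemma \ref{lem:kron} turns the right-hand side into the left-hand side, completing the proof. The whole argument therefore hinges on two ideas: representing $\underline{\mathbf{\Lambda}}$ through the genuinely diagonal matrix $\hat{\mathbf{\Lambda}}$, and exploiting that $\mathbf{\Phi}\otimes\mathbf{I}_d$ remains diagonal so that the seemingly non-commuting factors in fact commute.
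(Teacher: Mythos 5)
Your proof is correct and follows essentially the same route as the paper's: both collapse $(\mathbf{I}_n\otimes\mathbf{1}_d)\mathbf{\Phi}(\mathbf{I}_n\otimes\mathbf{1}_d^T)$ to $\mathbf{\Phi}\otimes\mathbf{J}_d$ via Lemma \ref{lem:kron}, split it as $(\mathbf{\Phi}\otimes\mathbf{I}_d)(\mathbf{I}_n\otimes\mathbf{J}_d)$, and conclude by commuting the diagonal factor past $\mathbf{\Lambda}^H$. The only difference is cosmetic — you conjugate $\mathbf{\Lambda}$ by $\mathbf{P}$ up front (working with the diagonal $\hat{\mathbf{\Lambda}}=\mathbf{P}\mathbf{\Lambda}\mathbf{P}^T$), whereas the paper conjugates $\mathbf{\Phi}\otimes\mathbf{I}_d$ instead and commutes $\mathbf{P}^T(\mathbf{\Phi}\otimes\mathbf{I}_d)\mathbf{P}$ with $\mathbf{\Lambda}^H$; these are the same commutation fact written in permuted versus original coordinates, and if anything your version makes the diagonality underlying that step more explicit.
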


\begin{corollary}\label{cor:1}
	Let $\mathbf{\Phi}{=}\big(d\mathbf{I}+\mu\underline{\mathbf{\Lambda}}\mathbf{\Psi}\underline{\mathbf{\Lambda}}^H\big)^{-1}$. Then, the expression in  \eqref{eq:FRH_2} \mbox{turns into}
	\begin{align}
	\begin{split}
	\tilde{\mathbf{r}}^*_H(\mu) =& \mathbf{\Lambda}\left[\mathbf{\Psi}-\mu
	\mathbf{\Psi P}^T\left((d\mathbf{I}+\mu   \underline{\mathbf{\Lambda}}\mathbf{\Psi}   \underline{\mathbf{\Lambda}}^H )^{-1}\otimes\mathbf{I}_d\right)\mathbf{P}\underline{\mathbf{\Lambda}}^H\underline{\mathbf{\Lambda}}\mathbf{\Psi}\right]\Bigg(\mu \mathbf{\Lambda}^H\tilde{\mathbf{b}}_H\\
	&+\sum_{j=1}^s\mathbf{\Gamma}_j^H\tilde{\mathbf{v}}_j\Bigg)-\tilde{\mathbf{b}}_H\,.
	\end{split}
	\end{align}
\end{corollary}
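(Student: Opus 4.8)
The plan is to recognise Corollary \ref{cor:1} as a direct specialisation of Proposition \ref{lem:new} to the particular matrix $\mathbf{\Phi}=\big(d\mathbf{I}+\mu\underline{\mathbf{\Lambda}}\mathbf{\Psi}\underline{\mathbf{\Lambda}}^H\big)^{-1}$. Collecting the two outer occurrences of $\mathbf{\Psi}$ in the middle factor of \eqref{eq:FRH_2}, that factor reads $\mathbf{\Psi}\,\big(\underline{\mathbf{\Lambda}}^H\mathbf{\Phi}\,\underline{\mathbf{\Lambda}}\big)\,\mathbf{\Psi}$, so that applying the identity \eqref{eq:dec} to the bracketed term and substituting the explicit form of $\mathbf{\Phi}$ back in would yield exactly the claimed expression, with every remaining factor of \eqref{eq:FRH_2} carried through untouched. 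Hence the whole argument reduces to checking that Proposition \ref{lem:new} is applicable, i.e. that this $\mathbf{\Phi}$ is a \emph{diagonal} $n\times n$ matrix.

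Verifying this diagonality is the one substantive step. First I would note that, since $\mathbf{\Lambda}$ and $\mathbf{\Psi}$ are diagonal, the $N\times N$ product $\mathbf{\Lambda}\mathbf{\Psi}\mathbf{\Lambda}^H$ is diagonal, and conjugation by the permutation $\mathbf{P}$ preserves diagonality, so $\mathbf{P}\,\mathbf{\Lambda}\mathbf{\Psi}\mathbf{\Lambda}^H\,\mathbf{P}^T$ is again diagonal. Using $\underline{\mathbf{\Lambda}}=(\mathbf{I}_n\otimes\mathbf{1}_d^T)\mathbf{P}\mathbf{\Lambda}$ from \eqref{eq:lam_bar_prop}, one rewrites $\underline{\mathbf{\Lambda}}\mathbf{\Psi}\underline{\mathbf{\Lambda}}^H = (\mathbf{I}_n\otimes\mathbf{1}_d^T)\,\big(\mathbf{P}\,\mathbf{\Lambda}\mathbf{\Psi}\mathbf{\Lambda}^H\,\mathbf{P}^T\big)\,(\mathbf{I}_n\otimes\mathbf{1}_d)$. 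Because $\mathbf{I}_n\otimes\mathbf{1}_d^T$ and $\mathbf{I}_n\otimes\mathbf{1}_d$ are block-diagonal with a single $d$-long block per row (resp.\ column), sandwiching a diagonal matrix between them collapses each length-$d$ diagonal block to a scalar and annihilates all cross-block terms; the outcome is the $n\times n$ diagonal matrix whose $k$-th entry is the sum of the $k$-th diagonal block. Adding $d\mathbf{I}$ preserves diagonality, and since the diagonal entries of $\underline{\mathbf{\Lambda}}\mathbf{\Psi}\underline{\mathbf{\Lambda}}^H$ are sums of the non-negative quantities $|\lambda|^2\psi\ge 0$, the diagonal entries of $d\mathbf{I}+\mu\underline{\mathbf{\Lambda}}\mathbf{\Psi}\underline{\mathbf{\Lambda}}^H$ are at least $d>0$; the matrix is therefore invertible and its inverse $\mathbf{\Phi}$ is diagonal.

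With the hypothesis of Proposition \ref{lem:new} established, I would substitute \eqref{eq:dec} into $\mathbf{\Psi}\big(\underline{\mathbf{\Lambda}}^H\mathbf{\Phi}\,\underline{\mathbf{\Lambda}}\big)\mathbf{\Psi}$ to obtain $\mathbf{\Psi}\mathbf{P}^T\big(\mathbf{\Phi}\otimes\mathbf{I}_d\big)\mathbf{P}\,\underline{\mathbf{\Lambda}}^H\underline{\mathbf{\Lambda}}\,\mathbf{\Psi}$, re-insert $\mathbf{\Phi}=\big(d\mathbf{I}+\mu\underline{\mathbf{\Lambda}}\mathbf{\Psi}\underline{\mathbf{\Lambda}}^H\big)^{-1}$, and restore the multiplicative factors $\mathbf{\Lambda}$, $\mu$ and the additive $-\tilde{\mathbf{b}}_H$ exactly as they appear in \eqref{eq:FRH_2}. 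The main obstacle is genuinely the diagonality check above: once that is in place the corollary is a one-line substitution. A secondary care point is bookkeeping the conjugate transposes so that the diagonal-plus-permutation manipulations remain valid over $\mathbb{C}$, but this is routine since all matrices involved are either diagonal or real permutations.
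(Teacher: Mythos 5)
Your proof is correct and follows essentially the same route as the paper: both establish that $\underline{\mathbf{\Lambda}}\mathbf{\Psi}\underline{\mathbf{\Lambda}}^H = (\mathbf{I}_n\otimes\mathbf{1}_d^T)\,\mathbf{P}\mathbf{\Lambda}\mathbf{\Psi}\mathbf{\Lambda}^H\mathbf{P}^T\,(\mathbf{I}_n\otimes\mathbf{1}_d)$ is diagonal (the permuted matrix being diagonal and the Kronecker factors collapsing each $d\times d$ diagonal block to a scalar), so that $\mathbf{\Phi}$ is the inverse of a sum of diagonal matrices, hence diagonal, and Proposition~\ref{lem:new} applies to give the claimed expression. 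Your explicit invertibility check (diagonal entries bounded below by $d>0$) is a small addition that the paper leaves implicit, but it does not alter the argument.
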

Recalling now the action of the  permutation matrix $\mathbf{P}$ on vectors, we have that the product $\hat{\mathbf{r}}_H^*(\mu)=\mathbf{P}\tilde{\mathbf{r}}_H^{*}(\mu)$ reads
\begin{align}\label{eq:actg}
\begin{split}
\!\!\hat{\mathbf{r}}_H^*(\mu){=}& 
\left[  \widehat{\mathbf{\Lambda}\mathbf{\Psi}}{-}\mu  \widehat{\mathbf{\Lambda}\mathbf{\Psi}}\left((d\mathbf{I}{+}\mu   \underline{\mathbf{\Lambda}}\mathbf{\Psi}   \underline{\mathbf{\Lambda}}^H )^{-1}\otimes\mathbf{I}_d\right)\widehat{\underline{\mathbf{\Lambda}}^H\underline{\mathbf{\Lambda}}\mathbf{\Psi}}\right]\Bigg(\mu \check{\mathbf{\Lambda}}\hat{\mathbf{b}}_H {+}\\
&{+} \sum_{j=1}^s\check{\mathbf{\Gamma}}_j\hat{\mathbf{v}}_j\Bigg)
{-}\hat{\mathbf{b}}_H,
\end{split}
\end{align}
where the matrix 
$\widehat{\underline{\mathbf{\Lambda}}^H\underline{\mathbf{\Lambda}}\mathbf{\Psi}}=    \mathbf{P}\mathbf{\Lambda}^H\mathbf{P}^T(\mathbf{I}_n\otimes \mathbf{J}_d)\mathbf{P\Lambda\Psi P}^T
$
acts on $\mathbf{g}\in\R^N$ as 
\begin{equation}
(\widehat{\underline{\mathbf{\Lambda}}^H\underline{\mathbf{\Lambda}}\mathbf{\Psi}}\mathbf{g})_i = \bar{\hat{\lambda}}_i \displaystyle{\sum_{\ell=0}^{d-1}}\frac{\hat{\lambda}_{\iota+\ell}}{\zeta_{\iota+\ell}+\epsilon}g_{\iota+\ell}\,,\quad \text{with}\quad \zeta_{\iota+\ell}= \displaystyle{\sum_{j=1}^s|\hat{\gamma}_{j,\iota+\ell}|^2}
\end{equation}
Combining altogether, we finally deduce:

\begin{proposition}\label{propr:w_fin}
	The residual whiteness function for the generalised Tikhonov least squares problem takes the form
	\begin{equation} \label{eq:white_fin}
	W(\mu) = \left(\displaystyle{\sum_{i=1}^{N}\left|\frac{\nu_i-\varrho_i}{1+\eta_i\mu}\right|^4   }\right) \Big/ {\left(\displaystyle{\sum_{i=1}^{N}\left|\frac{\nu_i-\varrho_i}{1+\eta_i\mu}\right|^2 }\right)^2  }\,,  
	\end{equation}
	where the parameters $\eta_i\in \R_{+}$, $\rho_i\in\mathbb{C}$,  $\nu_i\in\mathbb{C}$  are defined as:
	\begin{equation}\label{eq:scal_w}
	\eta_i :=\frac{1}{d}\displaystyle{\sum_{j=0}^{d-1}}\frac{|\hat{\lambda}_{\iota+j}|^2}{\zeta_{\iota+\ell}+\epsilon},\quad\varrho_i := \displaystyle{\sum_{\ell=0}^{d-1}}\hat{b}_{H,\iota+\ell},\quad\nu_i := \displaystyle{\sum_{\ell=0}^{d-1}}\frac{\hat{\lambda}_{\iota+\ell}\displaystyle{\sum_{j=1}^s\bar{\hat{\gamma}}_{j,\iota+\ell}\,\tilde{v}_{j,\iota+\ell}}}{\zeta_{\iota+\ell}+\epsilon}\,.
	\end{equation}
\end{proposition}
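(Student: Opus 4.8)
The plan is to reduce everything to the block-sum representation of the whiteness functional already supplied by Proposition \ref{prop:w_sr}. By that result $W(\mu)=\left(\sum_i w_i(\mu)^4\right)/\left(\sum_i w_i(\mu)^2\right)^2$ with $w_i(\mu)=|\sum_{\ell=0}^{d-1}(\hat{\mathbf{r}}_H^*(\mu))_{\iota+\ell}|$, so it suffices to show that the $d$-term block sum $\sum_{\ell=0}^{d-1}(\hat{\mathbf{r}}_H^*(\mu))_{\iota+\ell}$ equals $(\nu_i-\varrho_i)/(1+\eta_i\mu)$ with the scalars in \eqref{eq:scal_w}; taking moduli and inserting into the ratio then yields \eqref{eq:white_fin}. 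First I would take the explicit form of $\hat{\mathbf{r}}_H^*(\mu)$ in \eqref{eq:actg} as the starting point and read off its entries.

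The essential structural observation is that the operator in \eqref{eq:actg} is block-diagonal in blocks of size $d$: the factor $(\mathbf{I}_n\otimes\mathbf{J}_d)$ hidden inside $\widehat{\underline{\mathbf{\Lambda}}^H\underline{\mathbf{\Lambda}}\mathbf{\Psi}}$, together with the Kronecker factor $\otimes\,\mathbf{I}_d$ multiplying the $n\times n$ diagonal matrix $\mathbf{\Phi}=(d\mathbf{I}+\mu\underline{\mathbf{\Lambda}}\mathbf{\Psi}\underline{\mathbf{\Lambda}}^H)^{-1}$, force $\mathbf{\Phi}$ to act as a single scalar on each block. Concretely I would first compute the block-diagonal entries $(\underline{\mathbf{\Lambda}}\mathbf{\Psi}\underline{\mathbf{\Lambda}}^H)_{kk}=\sum_{\ell=0}^{d-1}|\hat{\lambda}_{\iota+\ell}|^2/(\zeta_{\iota+\ell}+\epsilon)=d\,\eta_i$, so that the relevant scalar becomes $1/\big(d(1+\mu\eta_i)\big)$. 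Summing the entry-wise action of \eqref{eq:actg} over a block, the Woodbury correction collapses the block sum $S:=\sum_{\ell}\hat{\lambda}_{\iota+\ell}c_{\iota+\ell}/(\zeta_{\iota+\ell}+\epsilon)$ of the forcing vector $\mathbf{c}=\mu\check{\mathbf{\Lambda}}\hat{\mathbf{b}}_H+\sum_j\check{\mathbf{\Gamma}}_j\hat{\mathbf{v}}_j$ via the identity $S-\tfrac{\mu\eta_i}{1+\mu\eta_i}S=S/(1+\mu\eta_i)$; splitting $S$ into its data and regularisation parts already identifies the regularisation contribution with $\nu_i$.

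The hard part, and the step that genuinely uses the super-resolution structure, is showing that the data contribution recombines with the $-\hat{\mathbf{b}}_H$ term to produce exactly $-\varrho_i/(1+\mu\eta_i)$. The data piece of $S/(1+\mu\eta_i)$ carries the weights $|\hat{\lambda}_{\iota+\ell}|^2/(\zeta_{\iota+\ell}+\epsilon)$ against $\hat{b}_{H,\iota+\ell}$, which is not a priori proportional to $\varrho_i=\sum_\ell\hat{b}_{H,\iota+\ell}$. The key is that $\mathbf{b}_H=\mathbf{S}^H\mathbf{b}$ is a zero-interpolated image, so its spectrum replicates the low-resolution spectrum; the permutation $\mathbf{P}$ is designed precisely so that, after reordering, the $d$ entries $\hat{b}_{H,\iota+\ell}$ coincide within each block. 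This block-constancy (a direct consequence of Lemma \ref{lem:FSSH}) lets me factor $\hat{b}_{H,\iota}$ out of the weighted sum, turning the data piece into $\mu\,\hat{b}_{H,\iota}\,d\eta_i/(1+\mu\eta_i)$ and $\varrho_i$ into $d\,\hat{b}_{H,\iota}$; a short computation then gives $\tfrac{\mu\hat{b}_{H,\iota}d\eta_i}{1+\mu\eta_i}-\varrho_i=-\varrho_i/(1+\mu\eta_i)$, as needed.

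Putting the pieces together, the block sum equals $(\nu_i-\varrho_i)/(1+\mu\eta_i)$, hence $w_i(\mu)=|(\nu_i-\varrho_i)/(1+\mu\eta_i)|$. Since $\eta_i,\varrho_i,\nu_i$ depend on $i$ only through the block index $\iota$, each value is shared by the $d$ indices of its block, so the identical repetition in the numerator and denominator of the ratio in Proposition \ref{prop:w_sr} is harmless, and substituting $w_i(\mu)$ yields \eqref{eq:white_fin}. I would finally double-check the bookkeeping of the conjugations $\check{\mathbf{\Lambda}},\check{\mathbf{\Gamma}}_j$ and of the index map defining $\mathbf{P}$, since these are the places where conjugation or index slips are most likely to creep in.
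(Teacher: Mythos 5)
Your proof is correct and follows essentially the same route as the paper's: both start from the Woodbury-expanded expression \eqref{eq:actg}, compute the block sum $\sum_{\ell=0}^{d-1}(\hat{\mathbf{r}}_H^*(\mu))_{\iota+\ell}$, and exploit the block-constancy of $\hat{\mathbf{b}}_H$ under the permutation $\mathbf{P}$ (the zero-interpolation/replication structure of $\mathbf{S}^H\mathbf{b}$) to make the data terms recombine, before substituting into Proposition \ref{prop:w_sr}. The only difference is organisational: the paper expands all $\mu$- and $\mu^2$-terms over a common denominator and observes that the first bracket in \eqref{eq:PRH} vanishes, whereas you keep the Woodbury correction as a per-block scalar and use the identity $S-\frac{\mu\eta_i}{1+\mu\eta_i}S=\frac{S}{1+\mu\eta_i}$ --- algebraically the same computation.
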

When $d=1$, i.e. no decimation is considered and $\mathbf{S}=\mathbf{I}_N$, the expression in \eqref{eq:white_fin} reduces to the one derived in \cite{etna} for image restoration problems.

According to the RWP, the optimal $\mu^*$ is selected as the one minimising the whiteness measure function in \eqref{eq:white_fin}. The value $\mu^*$ can be efficiently detected via grid-search or applying the Newton-Raphson algorithm to the nonlinear equation $W'(\mu)=0$. Finally, the optimal $\mu^*$ is used for the computation of the reconstruction $\mathbf{x}^*(\mu^*)$ based on \eqref{eq:x_sol_2}.

The main steps of the proposed procedure are summarised in Algorithm~\ref{alg:0}.


%
\begin{algorithm}
	\caption{Exact RWP approach for image super-resolution $\ell_2$-$\ell_2$ variational models of the form (\ref{eq:models}) under assumptions (A1), (A3)}
	\vspace{0.2cm}
	{\renewcommand{\arraystretch}{1.2}
		\renewcommand{\tabcolsep}{0.0cm}
		\vspace{-0.08cm}
		\begin{tabular}{ll}
			\textbf{inputs}:      & observed degraded image $\,\mathbf{b}\in \R^n$,
			\vspace{0.04cm} \\
			& decimation factors $d_r$, $d_c$
			\vspace{0.04cm} \\
			& blur and regularisation convolution operators 
			$\mathbf{\Lambda}$, $\mathbf{\Gamma}_j \in \R^{N \times N}, \; j \in \{1,\ldots,s\}$ 
			\vspace{0.04cm} \\
			\textbf{output}:$\;\;$     & estimated restored image $\,\mathbf{x}^* \in \R^N$ \vspace{0.2cm} \\
		\end{tabular}
	}
	\vspace{0.1cm}
	{\renewcommand{\arraystretch}{1.2}
		\renewcommand{\tabcolsep}{0.0cm}
		\begin{tabular}{rcll}
			1. & $\quad$ & \multicolumn{2}{l}{\textbf{compute:}$\;\;$ 
				$\underline{\mathbf{\Lambda}}{=}(\mathbf{I}_n\otimes \mathbf{1}_d^T)\mathbf{P\Lambda},\,\mathbf{\Psi}{=}(\sum_{j}\mathbf{\Gamma}_j^H\mathbf{\Gamma}_j+\epsilon)^{-1}$} \vspace{0.05cm}\\
			2. & $\quad$ & \multicolumn{2}{l}{\textbf{select:}$\,\quad\quad$ $\mu^*\in\argmin W(\mu)$} \vspace{0.05cm}\\
			3. & $\quad$ & \multicolumn{2}{l}{\textbf{compute:}$\;\;$ 
				$\;\mathbf{x}^*(\mu^*)$ by \eqref{eq:x_sol_2}} \vspace{0.05cm}\\
		\end{tabular}
	}
	\label{alg:0}
\end{algorithm}

\section{Iterated Residual Whiteness Principle for non-quadratic regularisers}
\label{sec:RWPgen}

In this section, we present an ADMM-based iterative solution procedure for the automatic selection of the regularisation parameter $\mu$ in super-resolution variational models of the general form \eqref{eq:models}. As it will be detailed, the approach strongly relies on results reported in the previous section.

First, we resort to the variable splitting strategy and rewrite our family of variational models (\ref{eq:models}) in the following equivalent linearly constrained form:
\begin{equation}
\left\{ \mathbf{x}^*(\mu),\mathbf{t}^*(\mu) \right\} 
\,\;{\in}\;\,
\arg \min_{\mathbf{x},\mathbf{t}} \:
\left\{ \: \frac{\mu}{2} \left\| \mathbf{SKx} - \mathbf{b} \right\|_2^2
\,\;{+}\;\, G(\mathbf{t}) \, \right\} 
\;\;\mathrm{s.t.:}
\;\;\; \mathbf{t} = \mathbf{L x} ,
\label{eq:PM_b}
\end{equation}
where $\mathbf{t}\in\R^M$ is the newly introduced variable.

To solve problem \eqref{eq:PM_b}, we introduce the augmented lagrangian function,
\begin{equation}
\mathcal{L}(\mathbf{x},\mathbf{t},\lambda;\mu) \;{=}\;\, \frac{\mu}{2} \lVert \mathbf{SKx} - \mathbf{b}\rVert_2^2\;{+}\;G(\mathbf{t})
-
\langle \, \lambda \, , \, \mathbf{t}- \mathbf{Lx} \, \rangle
\,\;{+}\;\,
\frac{\beta}{2} \: \| \, \mathbf{t} - \mathbf{Lx} \, \|_2^2
\: ,
\label{eq:PM_AL}
\end{equation}
where $\beta > 0$ is a scalar penalty parameter and $\mathbf{\lambda} \in \R^M$ is the dual variable, i.e. the vector of Lagrange multipliers associated with the set of $m$ linear constraints in (\ref{eq:PM_b}).

Solving \eqref{eq:PM_b} is tantamount to seek for the saddle point(s) of the Augmented Lagrangian function in (\ref{eq:PM_AL}). The saddle-point problem reads as follows:
\begin{equation}
\label{eq:SPPP}
\left\{ \mathbf{x}^*(\mu),\mathbf{t}^*(\mu),\mathbf{\lambda}^*(\mu) \right\} 
\,\;{\in}\;\, 
\arg \min_{\mathbf{x},\mathbf{t}} \max_{\mathbf{\lambda}} \: 
\mathcal{L}(\mathbf{x},\mathbf{t},\mathbf{\lambda};\mu) \, .
\end{equation}

Upon suitable initialisation, and for any $k\geq 0$, the $k$-th iteration of the standard ADMM applied to solving the saddle-point problem \eqref{eq:SPPP} with $\mathcal{L}$ defined in (\ref{eq:PM_AL}) reads as follows:
\begin{eqnarray}
\qquad\;\;\; & \mathbf{x}^{(k+1)} &
\;{\in}\;
\arg \min_{\mathbf{x}\in \R^n}  \,
\mathcal{L}(\mathbf{x},\mathbf{t}^{(k)},\mathbf{\lambda}^{(k)};\mu) 
\label{eq:PM_ADMM_x} \\
&&
\;{=}\; 
\arg \min_{\mathbf{x} \in \R^N} \! 
\left\{ 
\frac{\mu/\beta}{2} \left\| \mathbf{SKx} - \mathbf{b} \right\|_2^2 
\,{+}\;
\frac{1}{2}
\left\| \mathbf{Lx} - \mathbf{v}^{(k+1)} \right\|_2^2
\right\} \\
&&\text{with}\quad\mathbf{v}^{(k+1)} \,{=}\: \mathbf{t}^{(k)} \,{-}\: \frac{\mathbf{\lambda}^{(k)}}{\beta},
\nonumber 
\end{eqnarray}
\begin{eqnarray}
&\!\!\!\!\!\!\mathbf{t}^{(k+1)} &
\;{\in}\;
\arg \min_{\mathbf{t} \in \R^M} \,
\mathcal{L}(\mathbf{x}^{(k+1)},\mathbf{t},\mathbf{\lambda}^{(k)};\mu) 
\label{eq:PM_ADMM_t} \\
&&
\,\;{=}\;\, 
\arg \min_{\mathbf{t} \in \R^M} 
\left\{
G(\mathbf{t}) \;{+}\; \frac{\beta}{2} \left\| \mathbf{t} - \mathbf{q}^{(k+1)}  \right\|_2^2
\right\} \! \\
&&\text{with}\quad\mathbf{q}^{(k+1)} \:{=}\: \mathbf{Lx}^{(k+1)} + \frac{\mathbf{\lambda}^{(k)}}{\beta},
\nonumber \\
&
\mathbf{\lambda}^{(k+1)}&
\,\;{=}\;\,
\mathbf{\lambda}^{(k)} \,\;{-}\;\, \beta \, \left( \, \mathbf{t}^{(k+1)} \;{-}\;\, \mathbf{Lx}^{(k+1)} \, \right) \, .
\label{eq:PM_ADMM_l}
\end{eqnarray}
One can notice that when introducing the additional parameter $\gamma:= \mu/\beta$, the minimisation sub-problem (\ref{eq:PM_ADMM_x}) for the primal variable $\mathbf{x}$ has the form of the generalised Tikhonov least-squares problem. Hence, we employ the procedure proposed in \cite{etna} in the image restoration context; more specifically, the regularisation parameter $\mu$ - i.e. $\gamma$ - is adjusted along the ADMM iterations by applying the exact residual whiteness principle to problem (\ref{eq:PM_ADMM_x}).

More specifically, 
the complete $\mathbf{x}$-update procedure reads as follows:
\begin{eqnarray}
\!\!\!\!\!\mathbf{v}^{(k+1)} &\,{=}\,& \mathbf{t}^{(k)} - \frac{\mathbf{\lambda}^{(k)}}{\beta},
\label{eq:xup_a}
\\
\!\!\! \!\!\gamma^{(k+1)} &\,{=}\,& \arg \min_{\gamma \in \R_+} 
W \left( \gamma \right) \;\;\, \mathrm{with}\;\,W\;\mathrm{in}\;\,  \eqref{eq:white_fin}{-} \eqref{eq:scal_w} \,\;\mathrm{and}\;\, \mathbf{v} = \mathbf{v}^{(k+1)},
\label{eq:xup_b}
\\
\!\!\!\!\! \mathbf{x}^{(k+1)} &\,{=}\,&\mathbf{F}^H\left[\mathbf{\Psi}{-}\mu\mathbf{\Psi}\underline{\mathbf{\Lambda}}^H\left(d\mathbf{I}{+}\mu\underline{\mathbf{\Lambda}}\mathbf{\Psi}\underline{\mathbf{\Lambda}}^H\right)^{-1}\underline{\mathbf{\Lambda}}\mathbf{\Psi}\right]\left(\mu \mathbf{\Lambda}^H\tilde{\mathbf{b}}_H {+}\sum_{j=1}^s\mathbf{\Gamma}_j^H\tilde{\mathbf{v}}_j\right)\,.
\label{eq:xup_c}
\end{eqnarray}

The minimisation sub-problem (\ref{eq:PM_ADMM_t}) for the primal variable $\mathbf{t}$ can be written in the form of a proximity operator, namely
\begin{equation}
\mathbf{t}^{(k+1)} \,\;{\in}\;\, \mathrm{prox}_{\textstyle{\frac{1}{\beta}G}} \left(\mathbf{q}^{(k+1)}\right) \, , \quad\: 
\mathbf{q}^{(k+1)} \:{=}\: \mathbf{Lx}^{(k+1)} + \frac{\mathbf{\lambda}^{(k)}}{\beta} \,.
\label{eq:tup_a}
\end{equation}
According to assumption (A5), the regularisation function $G$ is easily proximable, which means that problem  (\ref{eq:tup_a}) admits a closed-form solution or, at least, it can be solved very efficiently. 

We report the closed-form expression of the proximity operators for the regularisation terms listed in Section~\ref{sec:ass2}.

For what concerns the case of the TV and WTV regularisers, the associated $2N$-variate proximity operators are both separable into $N$ independent bivariate proximity operators. 
In particular, after introducing the $N$ vectors 
$\,\breve{\mathbf{t}}_i^{\,\,(k+1)}, \:\breve{\mathbf{t}}_i^{\,\,(k+1)} \in \R^2$ defined by
\begin{equation}
\breve{\mathbf{t}}_i^{\,(k+1)} 
\,{:=}\,\left(t_i^{(k+1)},t_{i+N}^{(k+1)}\right), \;
\breve{\mathbf{q}}_i^{\,(k+1)} 
\,{:=}\, \left(q_i^{(k+1)},q_{i+N}^{(k+1)}\right), \;
i \in \{1,\ldots,N\}, 
\label{eq:tq}
\end{equation}
the proximity operators for TVI, TVA and WTV admit the following closed-form expressions:
\begin{align}
\breve{\mathbf{t}}_i^{\,\,(k+1)} \;{=}\;& 
\max\left(1 - \frac{1}{\beta \big\| \breve{\mathbf{q}}_i^{\,(k+1)} \big\|_2 } , 0\right) \breve{\mathbf{q}}_i^{\,(k+1)},\quad
i \in \{1,\ldots,n\}, \quad\quad\;\;\; \mathrm{[\,TVI\,]} 
\label{eq:TViprox} \\
\breve{\mathbf{t}}_i^{\,\,(k+1)} \;{=}\;& 
\max\left(1 - \frac{1}{\beta \big\| \breve{\mathbf{q}}_i^{\,(k+1)} \big\|_1} , 0\right) \breve{\mathbf{q}}_i^{\,(k+1)},\quad
i \in \{1,\ldots,n\}, \quad\quad\;\;\; \mathrm{[\,TVA\,]} 
\label{eq:TVaprox} \\
\breve{\mathbf{t}}_i^{\,\,(k+1)} \;{=}\;& 
\max\left(1 - \frac{\alpha_i^{(k)}}{\beta \big\| \breve{\mathbf{q}}_i^{\,(k+1)} \big\|_2 } , 0\right) \breve{\mathbf{q}}_i^{\,(k+1)},\quad
i\in \{1,\ldots,n\}, \quad\;\;\;\;\; \mathrm{[\,WTV\,]} \label{eq:WTVprox}
\end{align}
where the spatial weights of the WTV regulariser are denoted in (\ref{eq:WTVprox}) by $\alpha_i^{(k)}$, so as to recall that in the original WTV proposal \cite{wtv} such weights can be updated along iterations. 

Finally, the proximity operator for the WL1 regularisation term reads:
\begin{equation}
t_i = \mathrm{sign}(q_i)\max\left( 0,|q_i| - \frac{w_i}{\beta}\right),\qquad i = 1,\ldots,N\,.  \quad\;\;\; \mathrm{[\,WL1\,]} \label{eq:WL1prox}
\end{equation}

\begin{algorithm}
	\caption{IRWP-ADMM approach for image super-resolution variational models of the form (\ref{eq:models}) under assumptions (A1)-(A5)}
	\vspace{0.2cm}
	{\renewcommand{\arraystretch}{1.2}
		\renewcommand{\tabcolsep}{0.0cm}
		\vspace{-0.08cm}
		\begin{tabular}{ll}
			\textbf{inputs}:      & observed degraded image $\,\mathbf{b}\in \R^n$,
			\vspace{0.04cm} \\
			& decimation factors $d_r$, $d_c$
			\vspace{0.04cm} \\
			& blur and regularisation convolution operators 
			$\mathbf{\Lambda}$, $\mathbf{\Gamma}_j \in \R^{N \times N}, \; j \in \{1,\ldots,s\}$ 
			\vspace{0.04cm} \\
			& regularisation weights $w_i, \; i \in \{1,\ldots,N\}$ $\quad$ \big[$\;$only for WL1$\;$\big]
			\vspace{0.04cm} \\
			
			\textbf{output}:$\;\;$     & estimated restored image $\,\mathbf{x}^* \in \R^N$ \vspace{0.2cm} \\
		\end{tabular}
	}
	\vspace{0.1cm}
	{\renewcommand{\arraystretch}{1.2}
		\renewcommand{\tabcolsep}{0.0cm}
		\begin{tabular}{rcll}
			1. & $\quad$ & \multicolumn{2}{l}{\textbf{initialise:}$\;\;$ 
				compute $\mathbf{x}^{(0)}$ by TIK-L$_2$, $\:$ then $\,\mathbf{t}^{(0)} = \mathbf{L x}^{(0)}$} \vspace{0.05cm}\\
			2. && \multicolumn{2}{l}{\textbf{for} $\;$ \textit{k = 0, 1, 2, $\, \ldots \,$ until convergence $\:$} \textbf{do}:} \vspace{0.1cm}\\
			3. && \multicolumn{2}{l}{$\quad\;\;\bf{\cdot}$ compute $\:\gamma^{(k+1)} = \mu^{(k+1)} / \beta \;\,$    
				by (\ref{eq:xup_a})-(\ref{eq:xup_b})} \vspace{0.05cm} \\
			4. && $\quad\;\;\bf{\cdot}$ compute $\:\mathbf{x}^{(k+1)}$   & 
			by (\ref{eq:xup_c}) \vspace{0.05cm} \\
			5. && $\quad\;\;\bf{\cdot}$ compute $\:\mathbf{\alpha}^{(k+1)}$   & 
			by (13) in \cite{wtv} $\qquad\;\,\,\!\big[$ only for WTV$\,\big]$ \vspace{0.05cm} \\
			6. && $\quad\;\;\bf{\cdot}$ compute $\:\,\mathbf{t}^{(k+1)}$   & 
			by solving (\ref{eq:tup_a}) $\quad\big[$ (\ref{eq:tq}), (\ref{eq:WTVprox}), (\ref{eq:WL1prox}) for TV, WTV, WL1$\,\big]$ \vspace{0.05cm} \\
			7. && $\quad\;\;\bf{\cdot}$ compute $\:\mathbf{\lambda}^{(k+1)}$ $\;\,$& 
			by (\ref{eq:PM_ADMM_l}) \vspace{0.05cm} \\
			8. && \multicolumn{2}{l}{\textbf{end$\;$for}} \vspace{0.09cm} \\
			9. && \multicolumn{2}{l}{$\mathbf{x}^* = \mathbf{x}^{(k+1)}$}
		\end{tabular}
	}
	\label{alg:1}
\end{algorithm}

As initial guess of the IRWP-ADMM algorithm, we select the solution $\mathbf{x}^{(0)}$ of the TIK-L$_2$ model in \eqref{eq:l2l2} with $\mathbf{L}=\mathbf{D}$, $\mathbf{v}$ the null vector, and the regularisation parameter $\mu^{(0)}$ computed by applying the exact RWP outlined in Section~\ref{sec:rwpl2l2}, i.e. by minimising the residual whiteness function in \eqref{eq:white_fin}. In the context of image restoration \cite{etna}, this choice has already been proven to facilitate and speed up the convergence of the employed iterative scheme, that is outlined in Alg.~\ref{alg:1}.


%
\subsection{Sparse-recovery problems}
\label{sec:NCsparse}

In this section, we show how assumption (A2) on the convexity of the nonlinear function $G$, yielding the convexity of the overall cost function $\mathcal{J}$, can be circumvented, in specific cases, so as to extend the application of the IRWP-ADMM to non-convex variational models.

Our goal is to show that the proposed approach can effectively deal with real-world applications, as the one that motivates our discussion, i.e. super-resolution microscopy. More specifically, we are interested in the molecule localisation problem which is typically addressed by solving a $\ell_0$-penalised least squares criterion through the minimisation of the positively-constrained CEL0 relaxation \cite{CEL0}. 
\begin{equation}
\mathbf{x}^*(\mu) \;{\in}\; \argmin_{\mathbf{x} \,{\in}\, \R_+^N}
\left\{ \mathcal{J}_{\mathrm{CEL0}}(\mathbf{x};\mu,\mathbf{A}) \;{:=}\; \mathrm{\Phi}_{\mathrm{CEL0}}(\mathbf{x};\mu,\mathbf{A}) \;{+}\; \frac{\mu}{2}\|\mathbf{A x}-\mathbf{b}\|_2^2\right\} \, ,
\label{eq:Jcel0}
\end{equation}
%
%
%
with
\begin{equation}
\mathbf{A} \;{=}\; \mathbf{S K}, \quad\;\,
\mathrm{\Phi}_{\mathrm{CEL0}}(\mathbf{x};\mu,\mathbf{A})
\;{=}\; 
\sum_{i=1}^N \phi_{\mathrm{CEL0}}(x_i;\mu,\|\mathbf{a}_i\|_2) \, ,
\label{eq:Phicel0}
\end{equation}
where $\mathbf{a}_i \in \R^N$ denotes the $i$-th column of matrix $\mathbf{A}$ and the (parametric) penalty function $\mathrm{\Phi}_{\mathrm{CEL0}}: \R \to \R_+$ reads
\begin{equation}
\phi_{\mathrm{CEL0}}(x_i;\mu,\|\mathbf{a}_i\|_2)
\,{=}\,
1 - \frac{\mu \left\|\bm{a}_i\right\|_2^2}{2}
\left( \left|x_i\right| - \frac{\sqrt{2/\mu}}{\left\|\mathbf{a}_i\right\|_2}\right)^{\!2}
\!\chi_{\left[0,\sqrt{2/\mu} / \left\|\mathbf{a}_i\right\|_2\right]}\left(\left|x_i\right|\right).
\label{eq:phi_cel0}
\end{equation}
Notice that, for coherence with our convention that the regularisation parameter $\mu$ multiplies the fidelity term, in (\ref{eq:Jcel0})-(\ref{eq:phi_cel0}) we divided the CEL0 cost function defined in \cite{CEL0} by its regularisation parameter $\lambda$ (which is contained in the regularisation term) and then introduced $\mu := 1 / \lambda$.

In \cite{soub}, problem \eqref{eq:Jcel0} is solved by means of the iterative reweighted $\ell_1$ (IRL1) algorithm \cite{irl1}, which belongs to the class of {\color{black}so-called Majorise-Minimise (MM) optimisation approaches [cit]}. More precisely, at any iteration $h>0$ of the IRL1 scheme, one minimises a convex majorant of the non-convex cost function $J_{\mathrm{CEL0}}$ {\color{black}which is tangent to $J_{\mathrm{CEL0}}$ at the current iterate $\bm{x}^{(h)}$}. The majorising variational model is a convex weighted L$_1$-L$_2$ (WL1-L$_2$) model so that, upon suitable initialisation $\mathbf{x}^{(0)}$ - that can be computed by employing Alg.~1 for the solution of a L$_1$-L$_2$ variational model - the $h$-th iteration, $h\geq 0$, of the IRL1 algorithm applied to solving \eqref{eq:Jcel0} reads {\color{black}(see \cite{soub})}:

\begin{eqnarray}
& w_i^{(h)}&
\;{=}\; \mu \left(\sqrt{\frac{2}{\mu}}\|\mathbf{a}_i\|_2\;{-}\;\|\mathbf{a}_i\|_2^2\,|x_i^{(h)}|\right)\chi_{\left[0,\sqrt{2/\mu} / \left\|\mathbf{a}_i\right\|_2\right]}(|x_i^{(h)}|) \,,
\label{eq:pesi} \\
&
\mathbf{x}^{(h+1)}&\;{=}\; 
\arg \min_{\mathbf{x} \in \R^N} \! 
\left\{ \sum_{i=1}^N w_i^{(h)}(\mu)|x_i| \;{+}\; \frac{\mu}{2}\|\bm{\mA x}-\bm{b}\|_2^2\right\},
\label{eq:x_irl1}
\end{eqnarray}
where the weights $w_i(\mu)$ are the derivatives of $\phi_{\mathrm{CEL0}}$ computed at $\mathbf{x}^h$. The minimisation problem in \eqref{eq:x_irl1} can be easily addressed by ADMM - see \eqref{eq:PM_ADMM_x}-\eqref{eq:PM_ADMM_l} - with $\mathbf{L}=\mathbf{I}_N$. The sub-problem with respect to $\mathbf{t}$ reduces to compute the proximity operator of function 
\begin{equation}
G(\mathbf{t})= \sum_{i=1}^N\left( w_i^{(h)}|t_i|+ \iota_{\R_+}(t_i) \right)\,,
\end{equation}
whose closed-form expression
comes easily from \eqref{eq:WL1prox} and reads
\begin{equation}\label{eq:prox_t_l0}
t_i =  \max\left( 0,|q_i| - \frac{w_i}{\beta}\right),\qquad i = 1,\ldots,N\,.
\end{equation}

\begin{algorithm}
	\caption{IRWP-IRL1 approach for image super-resolution variational models of the form \eqref{eq:Jcel0}}
	\vspace{0.2cm}
	{\renewcommand{\arraystretch}{1.2}
		\renewcommand{\tabcolsep}{0.0cm}
		\vspace{-0.08cm}
		\begin{tabular}{ll}
			\textbf{inputs}:      & observed degraded image $\,\mathbf{b}\in \R^n$,
			\vspace{0.04cm} \\
			& decimation factors $d_r$, $d_c$
			\vspace{0.04cm} \\
			& blur and regularisation convolution operators 
			$\mathbf{\Lambda}$, $\mathbf{\Gamma}_j \in \R^{N \times N}, \; j \in \{1,\ldots,s\}$ 
			\vspace{0.04cm} \\
			
			\textbf{output}:$\;\;$     & estimated restored image $\,\mathbf{x}^* \in \R^N$ \vspace{0.2cm} \\
		\end{tabular}
	}
	\vspace{0.1cm}
	{\renewcommand{\arraystretch}{1.2}
		\renewcommand{\tabcolsep}{0.0cm}
		\begin{tabular}{rcll}
			1. & $\quad$ & \multicolumn{2}{l}{\textbf{initialise:}$\;\;$ 
				compute $\mathbf{x}^{(0)}$ and $\mathbf{t}^{(0)}=\mathbf{x}^{(0)}$ by IRWP-ADMM for L$_1$-L$_2$} \vspace{0.05cm}\\
			2. && \multicolumn{2}{l}{\textbf{for} $\;$ \textit{h = 0, 1, 2, $\, \ldots \,$ until convergence $\:$} \textbf{do}:} \vspace{0.1cm}\\
			3. && \multicolumn{2}{l}{$\;\;\;\bf{\cdot}$ compute  $\gamma^{(h)}=\mu^{(h)}/\beta$   
				by } \vspace{0.05cm} \\
			4. && \multicolumn{2}{l}{$\;\;\;\bf{\cdot}$ compute weights $w_i^{(h)}=w_i^{(h)}(\mu^{(h)},\mathbf{x}^{(h)})$   
				by \eqref{eq:pesi}} \vspace{0.05cm} \\
			5. && \multicolumn{2}{l}{$\quad$\textbf{for} $\;$ \textit{k = 1, 2, $\, \ldots \,$ until convergence $\:$} \textbf{do}:} \vspace{0.1cm}\\
			6. && $\;\quad\;\;\bf{\cdot}$ compute $\:\mathbf{x}^{(h+1,k+1)}$   & 
			by (\ref{eq:xup_c}) \vspace{0.05cm} \\
			7. && $\;\quad\;\;\bf{\cdot}$ compute $\:\,\mathbf{t}^{(h+1,k+1)}$   & 
			by solving (\ref{eq:prox_t_l0})  \vspace{0.05cm} \\
			8. && $\;\quad\;\;\bf{\cdot}$ compute $\:\mathbf{\lambda}^{(h+1,k+1)}$ $\;\,$& 
			by (\ref{eq:PM_ADMM_l}) \vspace{0.05cm} \\
			9. && \multicolumn{2}{l}{$\quad$\textbf{end for}} \vspace{0.1cm}\\
			10. && \multicolumn{2}{l}{\textbf{end$\;$for}} \vspace{0.09cm} \\
			11. && \multicolumn{2}{l}{$\mathbf{x}^* = \mathbf{x}^{(k+1)}$}
		\end{tabular}
	}
	\label{alg:2}
\end{algorithm}

In this setting, the RWP can again be applied so as to automatically update $\mu$ along the outer iterations of the iterative scheme \eqref{eq:pesi}-\eqref{eq:x_irl1}. At the ADMM iteration $k=0$ of the general outer iteration $h$, $\mu^{(h)}$ is updated by applying the residual whiteness principle to problem \eqref{eq:PM_ADMM_x}. Then, the weights $w_i^{(h)}$ are computed by \eqref{eq:pesi} with $\mu=\mu^{(h)}$ and $\mathbf{x}^{(h)}$ current iterate. The regularisation parameter and the weights are kept fixed along the ADMM iterations and updated at the beginning of the following outer iteration. 

The proposed IRWP-IRL1 is outlined in Algorithm \ref{alg:2}.

\section{Computed examples}
\label{sec:ex}
In this section, we evaluate the performance of the proposed RWP-based procedure for the automatic selection of the regularisation parameter $\mu$ in denoising, deblurring and super-resolution variational models of the form \eqref{eq:models}. More specifically, we consider the TV-L$_2$ which is particularly suitable for the reconstruction of piecewise constant images, such as, e.g., qrcodes; then, we focus our attention on the reconstruction of natural images for which the more flexible WTV-L$_2$ is employed. In the first two sets of numerical tests, we are also performing the TIK-L$_2$ variational model with $\mathbf{L}=\mathbf{D}$. Finally, we perform the IRL1 algorithm outlined in Section~\ref{sec:NCsparse} for the super-resolution microscopy of phantom images representing biological samples. For the TIK-L$_2$ model, we apply the exact RWP-based approach described in Sect.~\ref{sec:rwpl2l2}, whereas for the non-quadratic TV-L$_2$, WTV-L$_2$ models and for the IRL1 scheme we use the IRWP-ADMM iterative method outlined in Alg.~\ref{alg:1} and Alg.~\ref{alg:2}, respectively. The proposed RWP-based approach is compared with the DP-based parameter selection strategy, defined by criterion (\ref{eq:dp}) with $\tau = 1$ and $\sigma$ the true noise standard deviation.

The numerical tests have been designed with the following twofold aim:
\begin{itemize}
	\item prove that the RWP is capable of selecting optimal $\mu^*$ values returning high quality results in variational image super-resolution;
	\item prove that the proposed IRWP-ADMM approach is capable of automatically selecting such optimal $\mu^*$ values in a robust (and efficient) manner for non-quadratic super-resolution variational models.
\end{itemize}
More specifically, the latter point will be proved by showing that the IRWP-ADMM and the RWP applied a posteriori behave similarly in terms of $\mu$-selection.

For all the variational models considered in the experiments, there is a one-to-one relationship between the value of the regularization parameter $\mu$ and the standard deviation of the associated residual image $\mathbf{r}^*(\mu) = \mathbf{SKx}^*(\mu) - \mathbf{b}$. Hence, in all the reported results where $\mu$ represents the independent variable, we will substitute the $\mu$-values with the corresponding $\tau^*(\mu)$-values defined, according to (\ref{eq:dp}), as the ratio between the residual image standard deviation and the true noise standard deviation $\sigma$, in formula
\begin{equation}
\label{eq:tau_of_mu}
\tau^*(\mu) \;{:=}\; \frac{\lVert\mathbf{SKx}^*(\mu) - \mathbf{b}  \rVert_2}{\sqrt{n}\sigma}\,.
\end{equation}

In the first two set of examples, the quality of the restorations $\mathbf{x}^*$,  for different values of $\tau^*$, with respect to the original uncorrupted image $\mathbf{x}$, will be assessed by means of three scalar measures, namely the Structural Similarity Index (SSIM) \cite{ssim}, the Peak-Signal-to-Noise-Ratio (PSNR) and the Improved-Signal-to-Noise Ratio (ISNR), defined by 
\begin{equation}
\mathrm{PSNR} = \frac{20\log_{10}(\sqrt{N}\max(\mathbf{x},\mathbf{x}^*))}{\|\mathbf{x}-\mathbf{x}^*\|_2}\,,\quad \mathrm{ISNR} = 20\log_{10}\frac{\|\mathbf{x}-\bar{\mathbf{b}}\|_2}{\|\mathbf{x}-\mathbf{x}^*\|_2}
\end{equation}
respectively, with $\max(\mathbf{x},\mathbf{x}^*)$ representing the largest value of $\mathbf{x}$ and $\mathbf{x}^*$, while $\bar{\mathbf{b}}$ denotes the bicubic interpolation of $\mathbf{b}$ \cite{bic}.
In the third example, we select as measure of quality the Jaccard index $J_{\delta}\in [0,1]$, which is the ratio between correct detections up to some tolerance $\delta$ and the sum of correct detections, false negatives and false positives. In particular, we consider three different values of $\delta\in\{0,2,4\}$.

\subsection{IRWP-ADMM for TV regulariation}
We start testing the IRWP-ADMM on the TV-L$_2$ model for the reconstruction of the test image \texttt{qrcode} (256$\times$256 pixels) with pixel values between 0 and 1.

First, the original image has been corrupted by Gaussian blur, generated by the Matlab routine \texttt{fspecial} with parameters \texttt{band}=13 and \texttt{sigma}=3. The \texttt{band} parameter represents the side length (in pixels) of the square support of the kernel, whereas \texttt{sigma} is the standard deviation (in pixels) of the isotropic bivariate Gaussian distribution defining the kernel in the continuous setting. Then, the action of the decimation matrix $\mathbf{S}$ has been synthesised by applying a uniform blur with a $d_r \times d_c$ kernel, $d_r = d_c = 4$, and then selecting the rows and the columns of the blurred image according to the decimation factor $d_c$, $d_r$. Finally, the blurred and decimated image has been further corrupted by AWGN with standard deviation $\sigma=0.1$. The original image and the acquired data are shown in Figure~\ref{fig:qr_tr}-\ref{fig:qr_data}, respectively.

As the test image presents only vertical and horizontal edges, we are performing both the isotropic and the anisotropic version of TV.

The black solid curves in Figures~\ref{fig:qr_ws_iso},\ref{fig:qr_ws_ani} represent the residual whiteness functions $W(\mu)$ as defined in \eqref{eq:white_fin}, with $\mu$ replaced by $\tau^*(\mu)$ defined in (\ref{eq:tau_of_mu}), for the TVI-L$_2$ and  TVA-L$_2$ models. They have been computed by solving the models for a fine grid of different $\mu$-values, and then calculating for each $\mu$-value the two associated $\tau^*(\mu)$ and $W(\mu)$ quantities. 
The optimal $\tau$ according to the proposed RWP - which, we recall, corresponds to the global minimiser of function $W$ - is depicted by the vertical solid magenta lines, while the vertical dashed black lines correspond to $\tau=1$, that is to the classical DP.

One can observe that, independently from the selected regulariser, $W$ has a global minimiser over the considered domain. 

The ISNR and SSIM curves for different values of $\tau$ are plotted in Figures~\ref{fig:qr_vals_iso}, \ref{fig:qr_vals_ani}, where the vertical lines have the same meaning as in the first column figures. Note that the RWP tends to select a value for $\tau$ that maximises the ISNR rather than the SSIM.

\begin{figure}
	\centering
	\begin{subfigure}[t]{0.43\textwidth}
		\centering
		\includegraphics[width=4.5cm]{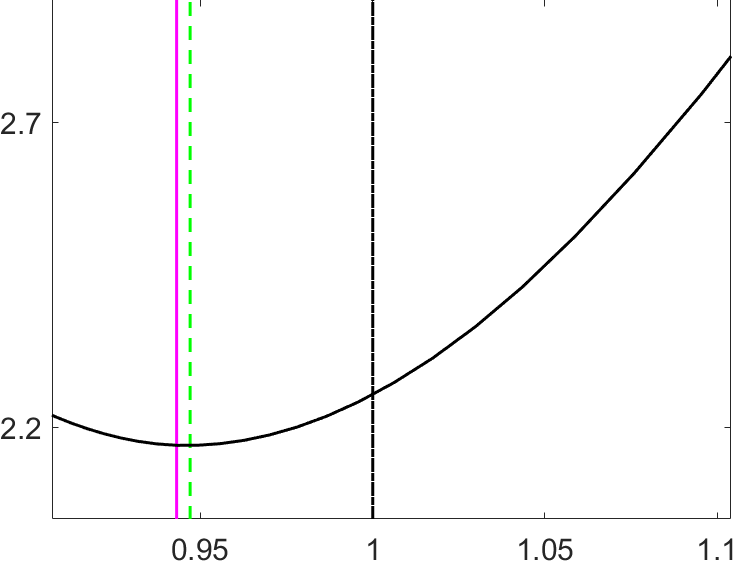}
		\caption{$W$ for TVI-L$_2$}
		\label{fig:qr_ws_iso}
	\end{subfigure}
	\begin{subfigure}[t]{0.43\textwidth}
		\centering
		\includegraphics[width=4.5cm]{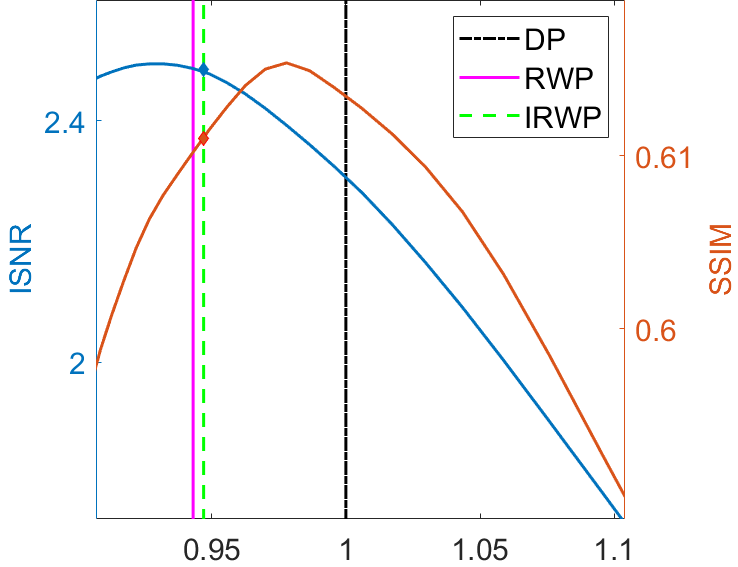}
		\caption{ISNR/SSIM for TVI-L$_2$}
		\label{fig:qr_vals_iso}
	\end{subfigure}
	\\
	\begin{subfigure}[t]{0.43\textwidth}
		\centering
		\includegraphics[width=4.5cm]{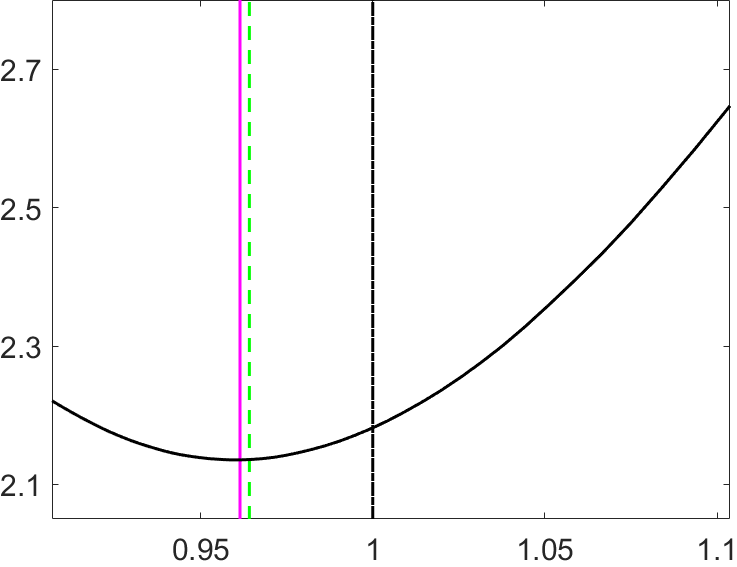}
		\caption{$W$ for TVA-L$_2$}
		\label{fig:qr_vals_ani}
	\end{subfigure}
	\begin{subfigure}[t]{0.43\textwidth}
		\centering
		\includegraphics[width=4.5cm]{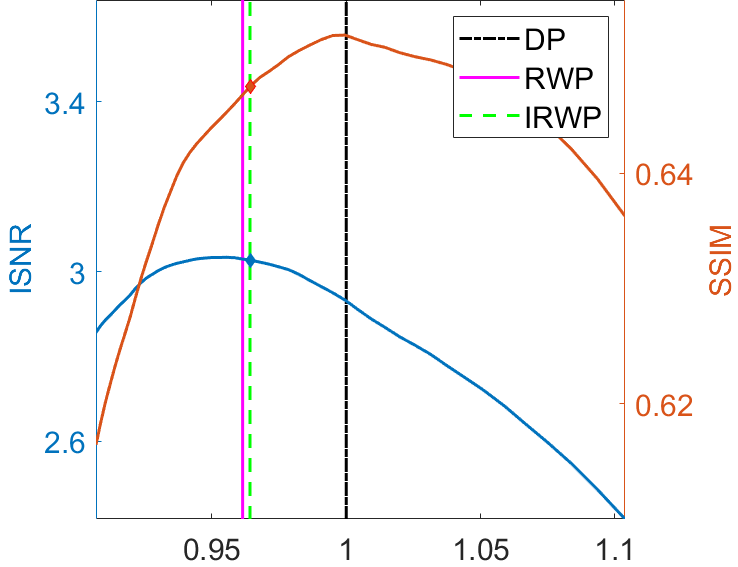}
		\caption{ISNR/SSIM for TVI-L$_2$}
		\label{fig:qr_ws_ani}
	\end{subfigure}
	\caption{Test image \texttt{qrcode}. Whiteness measure functions for the TVI-L$_2$ and the TVA-L$_2$ variational models (first column) and ISNR/SSIM values for different $\tau$s (second column). }
	\label{fig:qrcode_vals}
\end{figure}

We are also interested in verifying that the proposed IRWP-ADMM scheme outlined in Alg.~\ref{alg:1} succeeds in automatically selecting such optimal $\tau$ in a robust and efficient way.
To this purpose, the output $\tau$ of the iterative scheme is indicated with a dashed green line in Figures~\ref{fig:qr_ws_iso}--\ref{fig:qr_ws_ani}. The blue and red markers in Figures~\ref{fig:qr_vals_iso}, \ref{fig:qr_vals_ani} represent the final ISNR and SSIM values, respectively, of the image reconstructed via IRWP-ADMM. We observe that the algorithm returns a $\tau$ which is close to the optimal $\tau$ detected \emph{a posteriori} via the RWP. 

The reconstructed images via the bicubic interpolation, the initial guess computed by the TIK-L$_2$ models and the output reconstructions obtained with the TVI-L$_2$ and the TVA-L$_2$ variational models solved by the IRWP-ADMM approach in Alg.~\ref{alg:1} are shown in Figure~\ref{fig:qrcode}. Despite the image being binary, the bicubic interpolation in Figure~\ref{fig:qr_bic} performs very poorly. 
The TVI-L$_2$ is capable of detecting edges, as the contrast in the image is significative; however, as observable in Figure~\ref{fig:qr_tvi}, the rounding of corners, which is a typical drawback of isotropic TV regularisation, affects the quality of the final reconstruction. The anisotropic TV returns a high quality reconstruction - shown in Figure~\ref{fig:qr_tva} - as it naturally drives the regularisation along the true horizontal and the vertical edge directions.

\begin{figure}
	\centering
	\begin{subfigure}[t]{0.32\textwidth}
		\centering
		\begin{overpic}[height=3cm]{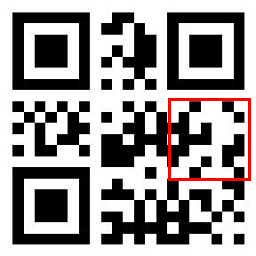}
			\put(1,1){\color{red}%
				\frame{\includegraphics[scale=0.15]{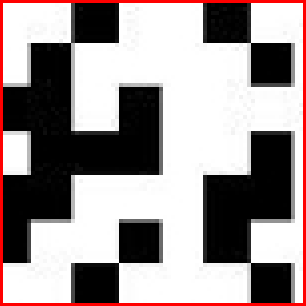}}}
		\end{overpic}    
		\caption{original $\mathbf{x}$}
		\label{fig:qr_tr}
	\end{subfigure}
	\begin{subfigure}[t]{0.32\textwidth}
		\centering
		\begin{overpic}[height=3cm]{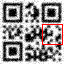}
			\put(1,1){\color{red}%
				\frame{\includegraphics[scale=0.60]{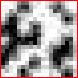}}}
		\end{overpic}    
		\caption{$\mathbf{b}$(x4)}
		\label{fig:qr_data}
	\end{subfigure}
	\begin{subfigure}[t]{0.32\textwidth}
		\centering
		\begin{overpic}[height=3cm]{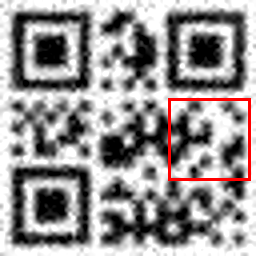}
			\put(1,1){\color{red}%
				\frame{\includegraphics[scale=0.15]{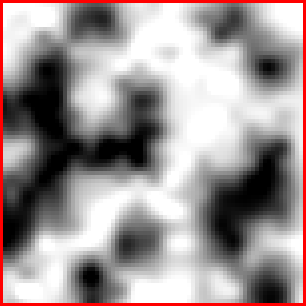}}}
		\end{overpic}    
		\caption{$\bar{\mathbf{b}}$}
		\label{fig:qr_bic}
	\end{subfigure}\\
	\begin{subfigure}[t]{0.32\textwidth}
		\centering
		\begin{overpic}[height=3cm]{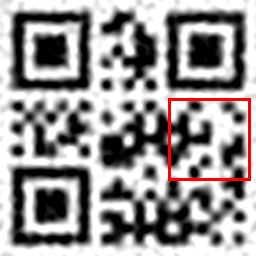}
			\put(1,1){\color{red}%
				\frame{\includegraphics[scale=0.15]{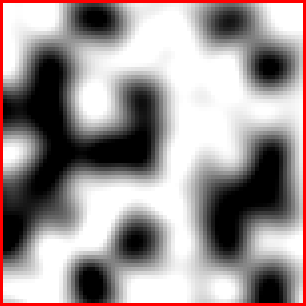}}}
		\end{overpic}    
		\caption{TIK-L$_2$}
		\label{fig:qr_tik}
	\end{subfigure}  
	\begin{subfigure}[t]{0.32\textwidth}
		\centering
		\begin{overpic}[height=3cm]{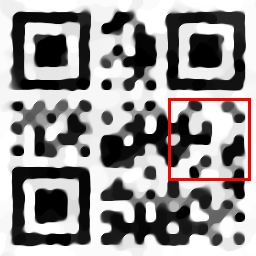}
			\put(1,1){\color{red}%
				\frame{\includegraphics[scale=0.15]{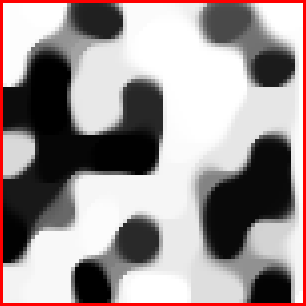}}}
		\end{overpic}    
		\caption{TVI-L$_2$}
		\label{fig:qr_tvi}
	\end{subfigure}
	\begin{subfigure}[t]{0.32\textwidth}
		\centering
		\begin{overpic}[height=3cm]{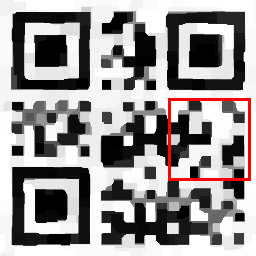}
			\put(1,1){\color{red}%
				\frame{\includegraphics[scale=0.15]{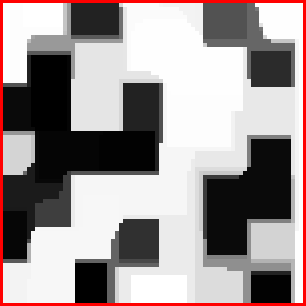}}}
		\end{overpic}    
		\caption{TVA-L$_2$}
		\label{fig:qr_tva}
	\end{subfigure}
	\caption{Original test image \texttt{qrcode} ($256\times 256$) (a), observed image $\mathbf{b}$ ($64\times 64$), reconstruction via bicubic interpolation (c), TIK-L$_2$ (d), TVI-L$_2$ (e) and TVA-L$_2$ (f).}
	\label{fig:qrcode}
\end{figure}

The PSNR, ISNR and SSIM values corresponding to the reconstructions in Figure~\ref{fig:qrcode} are reported in Table \ref{tab:1} (right).
\begin{table}
	\centering
	\renewcommand{\arraystretch}{1.95}	
	\begin{tabular}{l|c|ccc|ccc}
		\multicolumn{2}{c}{\quad}
		&\multicolumn{3}{c}{\texttt{band}\,{=}\,9, \texttt{sigma}\,{=}\,2, $\sigma\,{=}\,0.05$}&\multicolumn{3}{c}{\texttt{band}\,{=}\,13, \texttt{sigma}\,{=}\,3, $\sigma\,{=}\,0.1$}\\
		\hline\hline
		\multicolumn{2}{c}{\quad}
		&PSNR&ISNR&SSIM&PSNR&ISNR&SSIM\\
		\hline\hline
		\multirow{3}{*}{\STAB{\rotatebox[origin=c]{90}{\bf{\texttt{qrcode} \small{ TVI}}}}}&$\mathbf{x}_{\mathrm{IRWP}}$&18.3906&4.1987& 0.7705&14.9433&2.4724& 0.6123\\
		&$\mathbf{x}_{\mathrm{TIK}}$& 14.9034 &0.7115&    
		0.4458& 13.5492&1.0784&  0.3720\\
		&$\bar{\mathbf{b}}$&  14.1919& -&    0.4334&
		12.4708&-&0.3129\\
		\hline
		\multirow{3}{*}{\STAB{\rotatebox[origin=c]{90}{\bf{\texttt{qrcode} \footnotesize{ TVA}}}}}&$\mathbf{x}_{\mathrm{IRWP}}$&19.5895& 5.3976
		& 
		0.8100& 15.4972 & 3.0264  &   0.6475\\
		&$\mathbf{x}_{\mathrm{TIK}}$& 14.9034 &0.7115&    
		0.4458& 13.5492&1.0784&  0.3720\\
		&$\bar{\mathbf{b}}$&  14.1919& -&    0.4334&
		12.4708&-&0.3129\\
		\hline
		\multirow{3}{*}{\STAB{\rotatebox[origin=c]{90}{\bf{\texttt{geometric}}}}}&$\mathbf{x}_{\mathrm{IRWP}}$&20.4136&3.5519& 
		0.7664& 17.0677&  2.2486& 0.5861\\
		&$\mathbf{x}_{\mathrm{TIK}}$& 16.7718&-0.0900&    
		0.2696& 15.5181 & 0.6990& 0.2834\\
		&$\bar{\mathbf{b}}$&16.8617& -&
		0.3687&  14.8191&-& 0.2292\\
	\end{tabular}
	\caption{PSNR, ISNR and SSIM values achieved by the bicubic interpolation and the three considered variational models coupled with the proposed RWP and solved by the IRWP-ADMM approach in Alg.~\ref{alg:1}.}
	\label{tab:1}
\end{table}

As a second example, we consider the test image \texttt{geometric} (320$\times$320) corrupted by the same blur, decimation factors and AWGN as the ones considered for the test image \texttt{qrcode}. The original image and the observed data are shown in Figures~\ref{fig:geom_tr}-\ref{fig:geom_data}. In this case, we only perform the isotropic version of TV, to which we are going to refer as TV. 

The residual whiteness function and the ISNR and SSIM curves for the TV-L$_2$ model are shown in Figures~\ref{fig:geom_tv_ws} and~\ref{fig:geom_tv_vals}, respectively. Also in this case, the $W$ function exhibits a global minimiser over the considered domain for $\tau$ and the $\tau$ selected by RWP and IRWP are very close to each other and return high-quality reconstructions in terms of ISNR/SSIM tradeoff. We also notice that the $\tau$ values selected by RWP and IRWP are in this case very close to the one selected by DP. It is worth recalling that RWP and IRWP, unlike DP, do not need to know the noise standard deviation.

\begin{figure}
	\centering
	\begin{subfigure}[t]{0.43\textwidth}
		\centering
		\includegraphics[height=3.5cm]{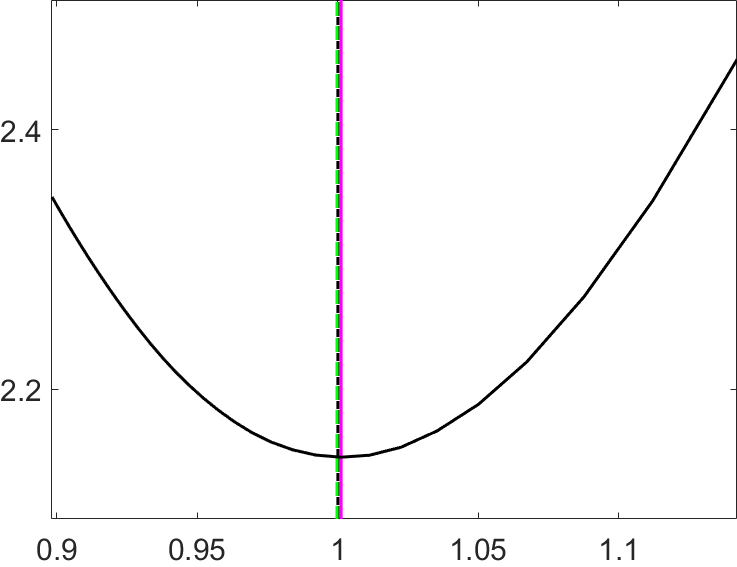}
		\caption{$W$ for TV-L$_2$}
		\label{fig:geom_tv_ws}
	\end{subfigure}
	\begin{subfigure}[t]{0.43\textwidth}
		\centering
		\includegraphics[height=3.5cm]{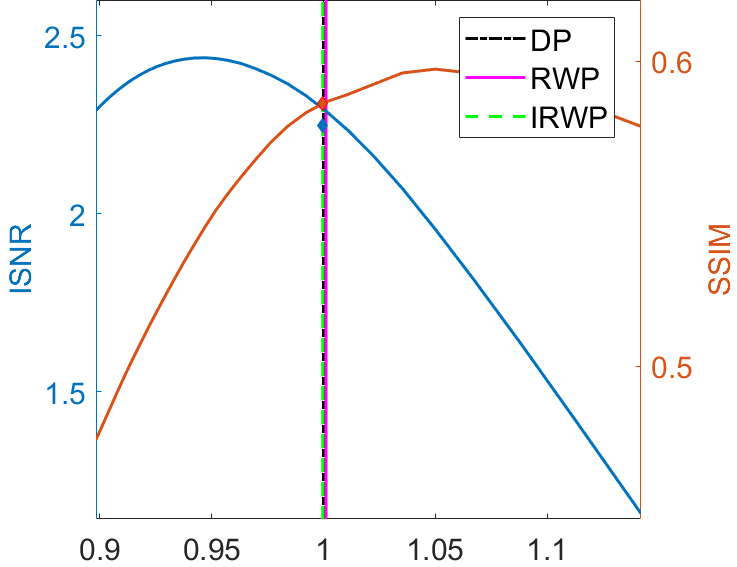}
		\caption{ISNR/SSIM for TV-L$_2$}
		\label{fig:geom_tv_vals}
	\end{subfigure}
	\caption{Test image \texttt{geometric}. Whiteness measure function for the TV-L$_2$ model (first column) and ISNR/SSIM values for different $\tau$s (second column). }
	\label{fig:geom_vals2}
\end{figure}

The quality indices of the restorations computed via the IRWP-ADMM approach are reported in Table \ref{tab:1}, while the corresponding reconstructions images are shown in Figure~\ref{fig:geom rec}.

\begin{figure}
	\centering
	\begin{subfigure}[t]{0.32\textwidth}
		\centering
		\begin{overpic}[height=3cm]{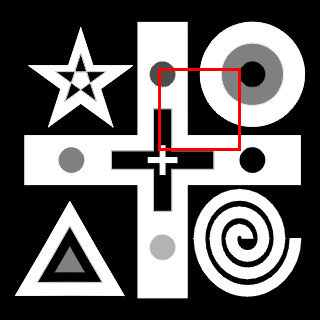}
			\put(1,1){\color{red}%
				\frame{\includegraphics[scale=0.15]{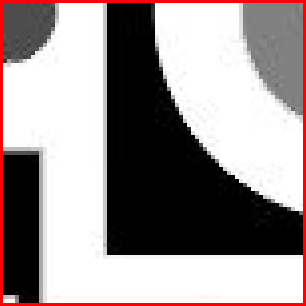}}}
		\end{overpic}    
		\caption{original $\mathbf{x}$}
		\label{fig:geom_tr}
	\end{subfigure}
	\begin{subfigure}[t]{0.32\textwidth}
		\centering
		\begin{overpic}[height=3cm]{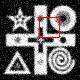}
			\put(1,1){\color{red}%
				\frame{\includegraphics[scale=0.60]{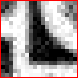}}}
		\end{overpic}    
		\caption{$\mathbf{b}$(x4)}
		\label{fig:geom_data}
	\end{subfigure}
	\begin{subfigure}[t]{0.32\textwidth}
		\centering
		\begin{overpic}[height=3cm]{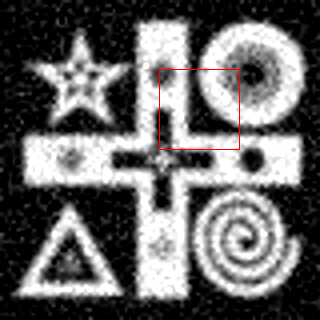}
			\put(1,1){\color{red}%
				\frame{\includegraphics[scale=0.15]{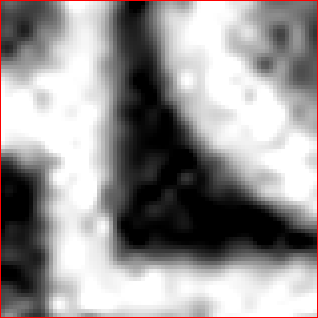}}}
		\end{overpic}    
		\caption{$\bar{\bm{b}}$}
		\label{fig:geom_bic}
	\end{subfigure}\\
	\begin{subfigure}[t]{0.32\textwidth}
		\centering
		\begin{overpic}[height=3cm]{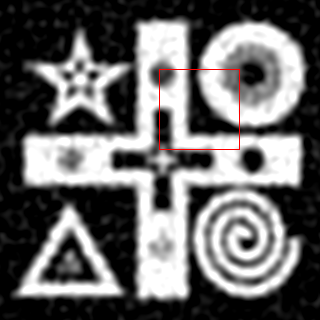}
			\put(1,1){\color{red}%
				\frame{\includegraphics[scale=0.15]{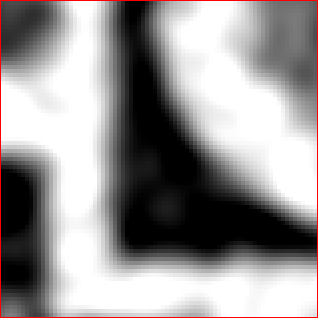}}}
		\end{overpic}    
		\caption{TIK-L$_2$}
		\label{fig:geom_tik}
	\end{subfigure}  
	\begin{subfigure}[t]{0.32\textwidth}
		\centering
		\begin{overpic}[height=3cm]{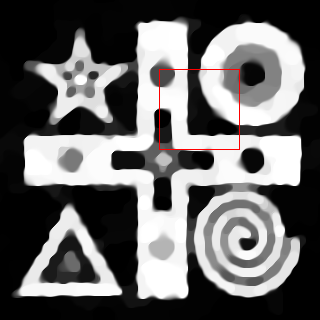}
			\put(1,1){\color{red}%
				\frame{\includegraphics[scale=0.15]{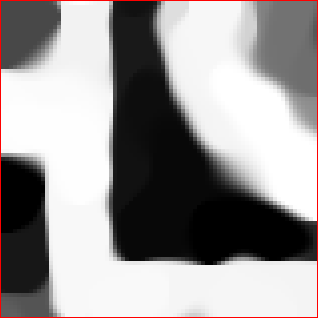}}}
		\end{overpic}    
		\caption{TV-L$_2$}
		\label{fig:geom_tv}
	\end{subfigure}
	\caption{Original test image \texttt{geometric} ($320\times 320$) (a), observed image $\mathbf{b}$ ($80\times 80$), reconstruction via bicubic interpolation (c), TIK-L$_2$ (d), and TV-L$_2$ (e).}
	\label{fig:geom rec}
\end{figure}

For this second example, we also show the behavior of the regularisation parameter $\mu$, of the ISNR and of the SSIM along the iterations of the IRWP-ADDM for the TV-L$_2$ variational model - see Figures~\ref{fig:g_mu},\ref{fig:g_isnr},\ref{fig:g_ssim}, respectively.

\begin{figure}
	\centering
	\begin{subfigure}{0.32\textwidth}
		\includegraphics[height=2.9cm]{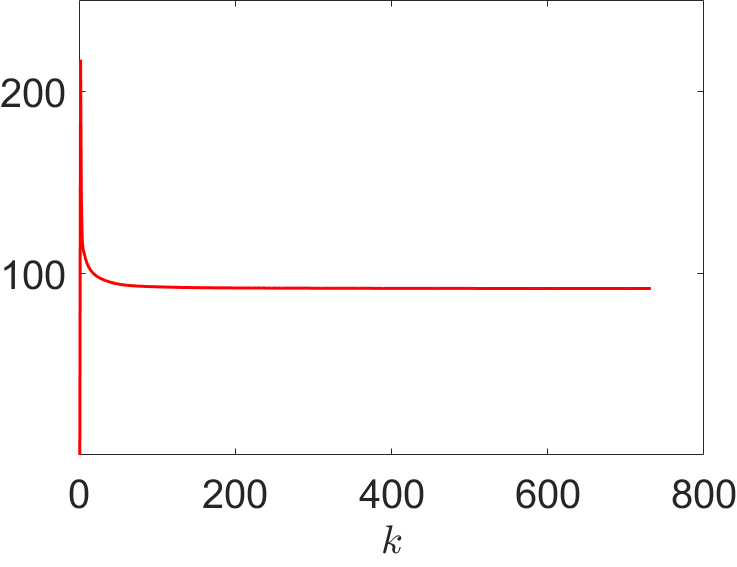}
		\caption{$\mu$}
		\label{fig:g_mu}
	\end{subfigure}
	\begin{subfigure}{0.32\textwidth}
		\includegraphics[height=2.9cm]{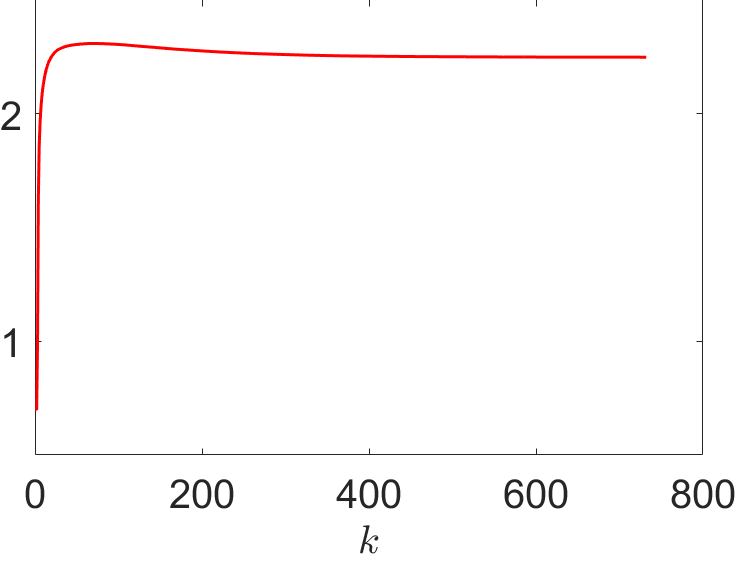}
		\caption{ISNR}
		\label{fig:g_isnr}
	\end{subfigure}
	\begin{subfigure}{0.32\textwidth}
		\includegraphics[height=2.9cm]{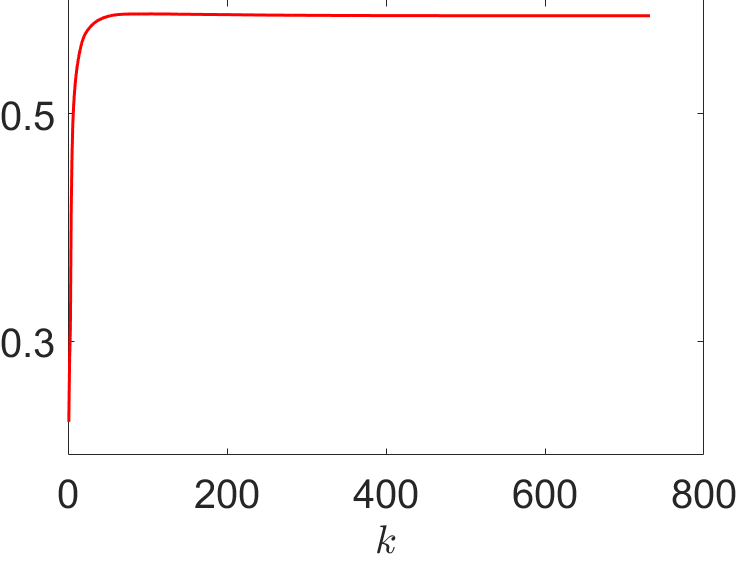}
		\caption{SSIM}
		\label{fig:g_ssim}
	\end{subfigure}
	\caption{Convergence plots for the IRWP-ADMM approach outlined in Alg.~\ref{alg:1} applied to restoring the test image \texttt{geometric} via the TV-L$_2$ variational model.}
	\label{fig:geom_conv2}
\end{figure}

Finally, for the two test images \texttt{qrcode} and \texttt{geometric}, we report in Table \ref{tab:1} the PSNR/ISNR/SSIM achieved by the bicucbic interpolation and the considered variational models when applying a less severe noise degradation to the original images (left side of the table).

\subsection{IRWP-ADMM for WTV regularisation}

We are now testing the IRWP-ADMM for the WTV-L$_2$ variational model employed for the reconstruction of natural images. First, we consider the test images \texttt{church} ($480\times 320$) and \texttt{monarch} ($512\times 512$) both corrupted by Gaussian blur with \texttt{band}=13
and $\texttt{sigma}=3$, decimated with factors $d_c=d_r=2$ and further degradated by an AWGN with $\sigma=0.1$. The original uncorrupted images are shown in Figures~\ref{fig:church_tr},\ref{fig:mon_tr}, while the acquired data are shown in Figures~\ref{fig:church_data},\ref{fig:mon_data}, respectively.
We show the behavior of the residual whiteness measure for the two test images in the first column of Figure~\ref{fig:vals_wtv}. Notice that, as already highlighted in \cite{etna}, the adoption of a more flexible regularisation term yields that the ISNR and the SSIM achieve their maximum value at approximately the same $\tau$. In both cases, the IRWP-ADMM for the solution of the WTV-L$_2$ returns a $\tau^*(\mu)$ very close to the one selected by the RWP; moreover, the ISNR/SSIM values corresponding to the output $\tau^*s$ are close to the optimal ones and, in any case, larger than the one achieved by means of the DP.

\begin{figure}
	\centering
	\begin{subfigure}[t]{0.43\textwidth}
		\centering
		\includegraphics[height=3.5cm]{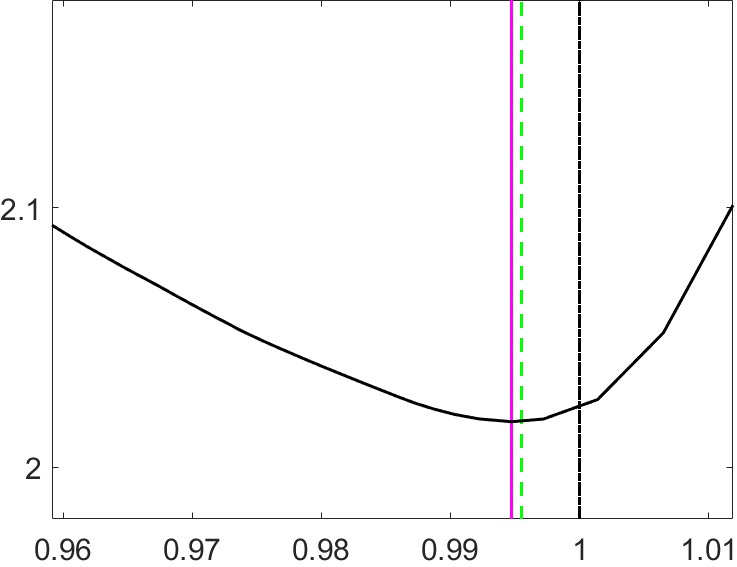}
		\label{fig:ch_ws}
	\end{subfigure}    
	\begin{subfigure}[t]{0.43\textwidth}
		\centering
		\includegraphics[height=3.5cm]{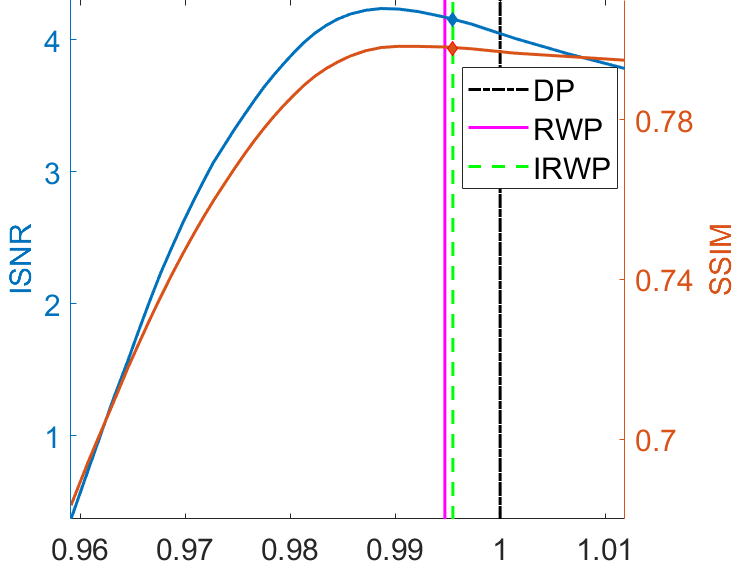}
		\label{ch_vals}
	\end{subfigure}
	\\
	\begin{subfigure}[t]{0.43\textwidth}
		\centering
		\includegraphics[height=3.5cm]{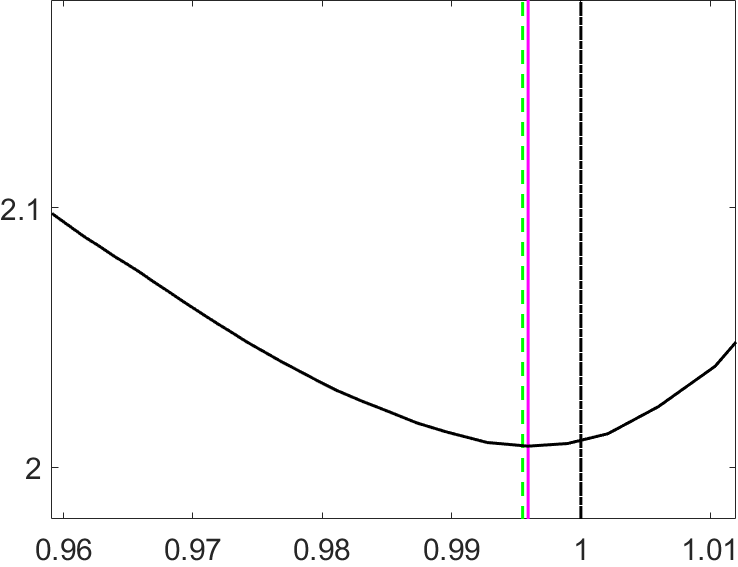}
		\label{fig:mon_ws}
	\end{subfigure}
	\begin{subfigure}[t]{0.43\textwidth}
		\centering
		\includegraphics[height=3.5cm]{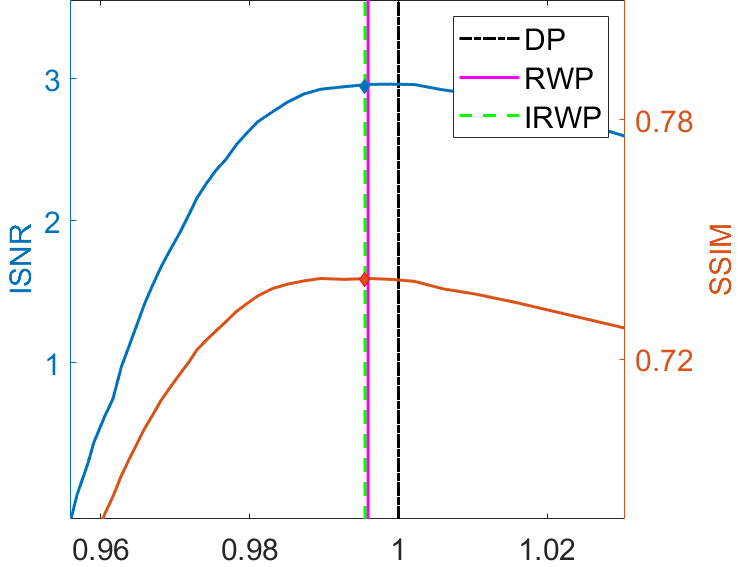}
		\label{fig:mon_vals}
	\end{subfigure}
	\caption{Whiteness measure functions for the WTV-L$_2$ variational model (first column) and ISNR/SSIM values for different $\tau$s (second column) for the test image \texttt{church} (top row) and the test image \texttt{monarch} (bottom row). }
	\label{fig:vals_wtv}
\end{figure}

The image reconstructed via bicubic interpolation, the initial guess computed by the TIK-L$_2$ model and the final reconstructions obtained with the IRWP-ADMM for the WTV-L$_2$ model are shown in Figures~\ref{fig:church},\ref{fig:monarch} for the test image \texttt{church} and \texttt{monarch}, respectively.

\begin{figure}
	\centering
	\begin{subfigure}[t]{0.325\textwidth}
		\centering
		\begin{overpic}[height=3cm]{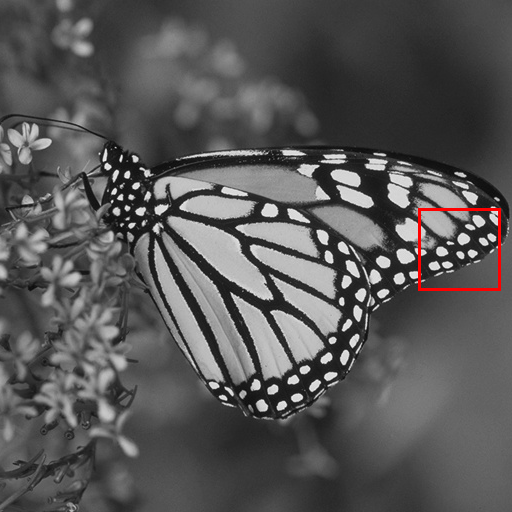}
			\put(0.5,0.5){\color{red}%
				\frame{\includegraphics[scale=0.15]{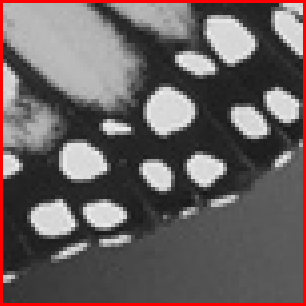}}}
		\end{overpic}    
		\caption{original $\mathbf{x}$}
		\label{fig:mon_tr}
	\end{subfigure}
	\begin{subfigure}[t]{0.325\textwidth}
		\centering
		\begin{overpic}[height=3cm]{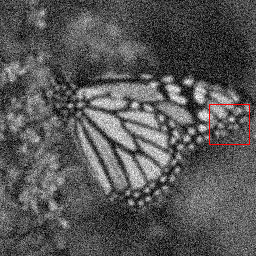}
			\put(0.5,0.5){\color{red}%
				\frame{\includegraphics[scale=0.30]{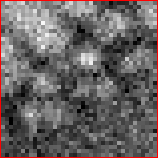}}}
		\end{overpic}    
		\caption{$\mathbf{b}$(x2)}
		\label{fig:mon_data}
	\end{subfigure}
	\begin{subfigure}[t]{0.325\textwidth}
		\centering
		\begin{overpic}[height=3cm]{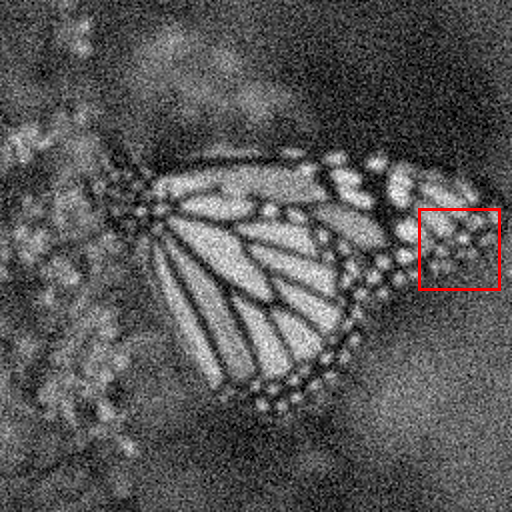}
			\put(0.5,0.5){\color{red}%
				\frame{\includegraphics[scale=0.15]{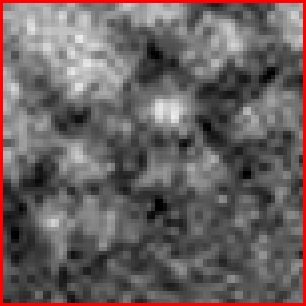}}}
		\end{overpic}    
		\caption{$\bar{\mathbf{b}}$}
		\label{fig:mon_bic}
	\end{subfigure}\\
	\begin{subfigure}[t]{0.325\textwidth}
		\centering
		\begin{overpic}[height=3cm]{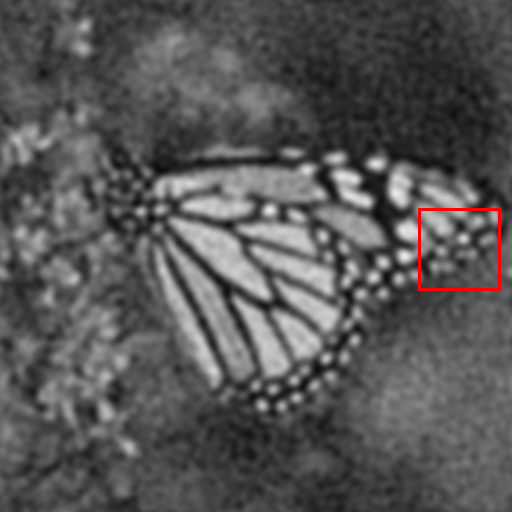}
			\put(0.5,0.5){\color{red}%
				\frame{\includegraphics[scale=0.15]{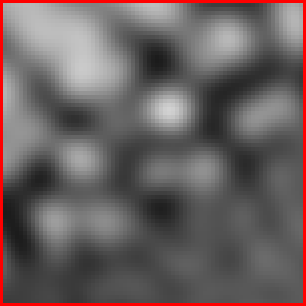}}}
		\end{overpic}    
		\caption{TIK-L$_2$}
		\label{fig:mon_tik}
	\end{subfigure}  
	\begin{subfigure}[t]{0.325\textwidth}
		\centering
		\begin{overpic}[height=3cm]{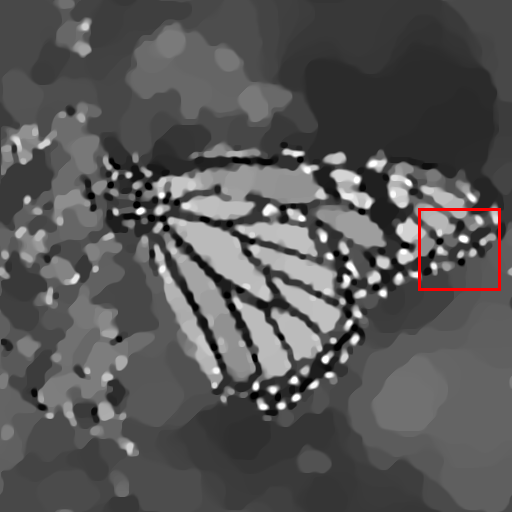}
			\put(0.5,0.5){\color{red}%
				\frame{\includegraphics[scale=0.15]{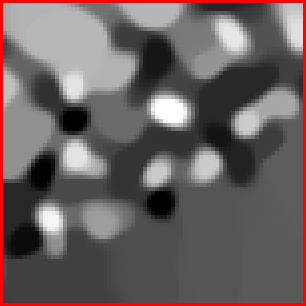}}}
		\end{overpic}    
		\caption{WTV-L$_2$}
		\label{fig:mon_wtv}
	\end{subfigure}
	\caption{Original test image \texttt{monarch} ($520\times 520$) (a), observed image $\mathbf{b}$ ($260\times 260$), reconstruction via bicubic interpolation (c), TIK-L$_2$ (d), and WTV-L$_2$ (e).}
	\label{fig:monarch}
\end{figure}

Moreover, for the test image \texttt{church}, we also report in Figure \ref{fig:ch_conv2} the convergence plots of the regularisation parameter $\mu$, the ISNR and the SSIM along the iterations of the IRWP-ADMM.

\begin{figure}
	\centering
	\begin{subfigure}[t]{0.325\textwidth}
		\centering
		\begin{overpic}[height=2.5cm]{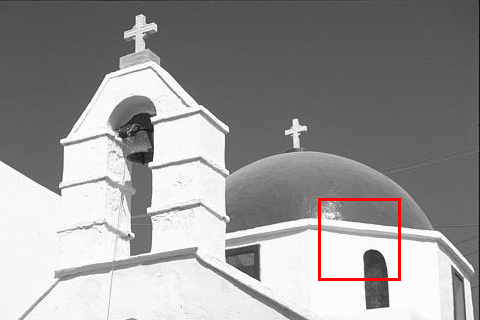}
			\put(0.5,0.5){\color{red}%
				\frame{\includegraphics[scale=0.15]{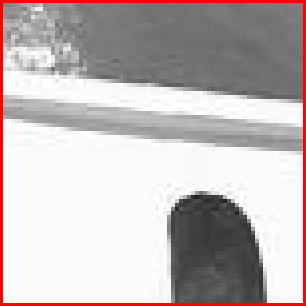}}}
		\end{overpic}    
		\caption{original $\mathbf{x}$}
		\label{fig:church_tr}
	\end{subfigure}
	\begin{subfigure}[t]{0.325\textwidth}
		\centering
		\begin{overpic}[height=2.5cm]{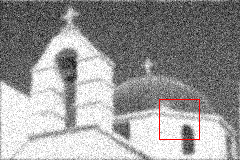}
			\put(0.5,0.5){\color{red}%
				\frame{\includegraphics[scale=0.30]{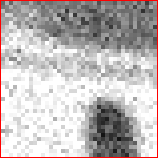}}}
		\end{overpic}    
		\caption{$\mathbf{b}$(x2)}
		\label{fig:church_data}
	\end{subfigure}
	\begin{subfigure}[t]{0.325\textwidth}
		\centering
		\begin{overpic}[height=2.5cm]{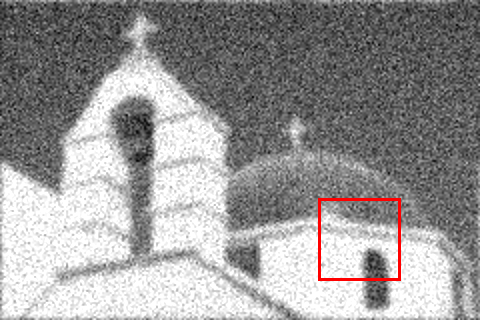}
			\put(0.5,0.5){\color{red}%
				\frame{\includegraphics[scale=0.15]{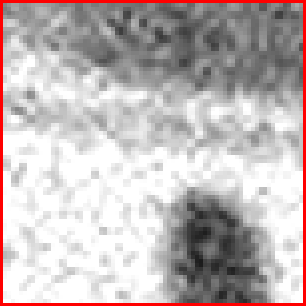}}}
		\end{overpic}    
		\caption{$\bar{\mathbf{b}}$}
		\label{fig:church_bic}
	\end{subfigure}\\
	\begin{subfigure}[t]{0.325\textwidth}
		\centering
		\begin{overpic}[height=2.5cm]{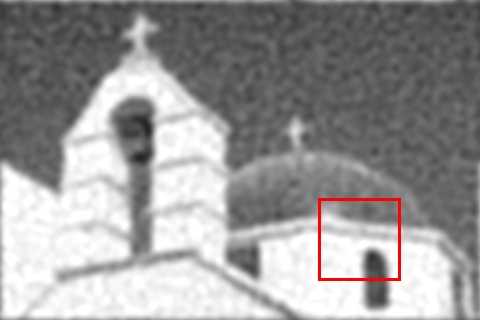}
			\put(0.5,0.5){\color{red}%
				\frame{\includegraphics[scale=0.15]{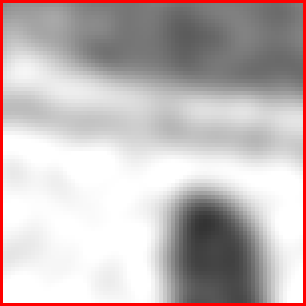}}}
		\end{overpic}    
		\caption{TIK-L$_2$}
		\label{fig:church_tik}
	\end{subfigure}  
	\begin{subfigure}[t]{0.325\textwidth}
		\centering
		\begin{overpic}[height=2.5cm]{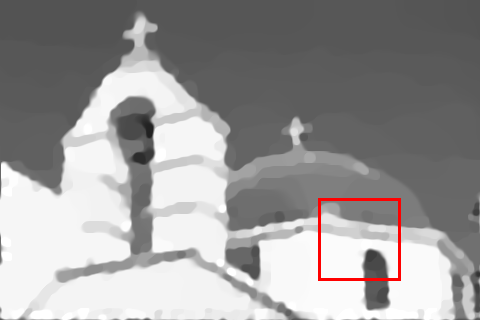}
			\put(0.5,0.5){\color{red}%
				\frame{\includegraphics[scale=0.15]{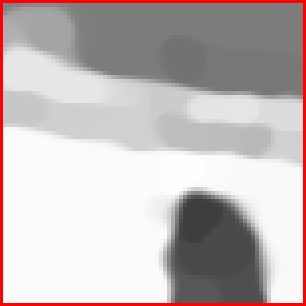}}}
		\end{overpic}    
		\caption{WTV-L$_2$}
		\label{fig:church_wtv}
	\end{subfigure}
	\caption{Original test image \texttt{church} ($480\times 320$) (a), observed image $\mathbf{b}$ ($240\times 160$), reconstruction via bicubic interpolation (c), TIK-L$_2$ (d), and WTV-L$_2$ (e).}
	\label{fig:church}
\end{figure}

\begin{figure}
	\centering
	\begin{subfigure}{0.32\textwidth}
		\includegraphics[height=2.9cm]{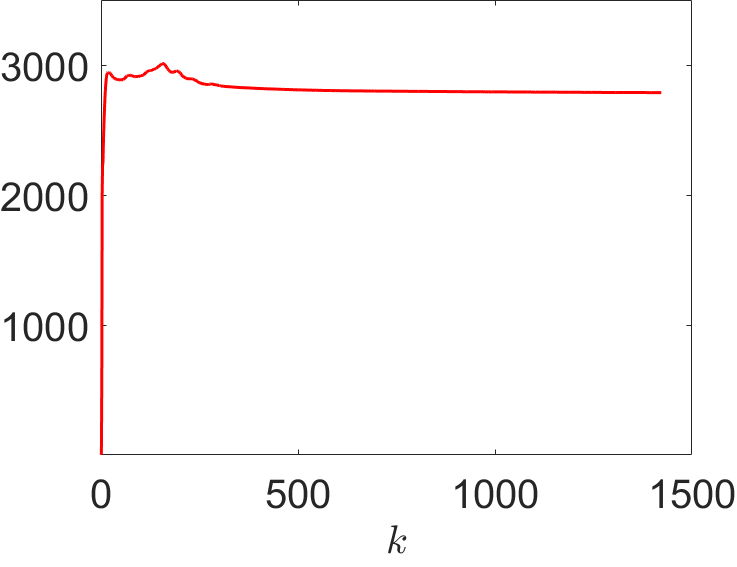}
		\caption{$\mu$}
		\label{fig:mu}
	\end{subfigure}
	\begin{subfigure}{0.32\textwidth}
		\includegraphics[height=2.9cm]{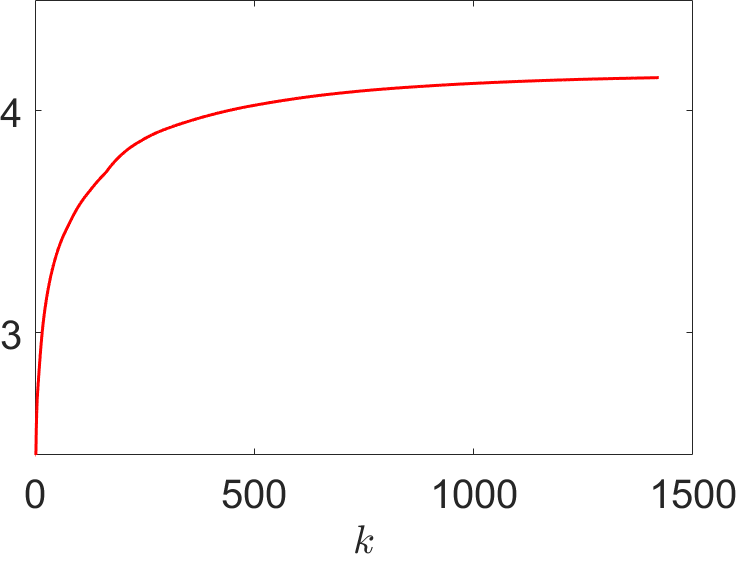}
		\caption{ISNR}
		\label{fig:isnr}
	\end{subfigure}
	\begin{subfigure}{0.32\textwidth}
		\includegraphics[height=2.9cm]{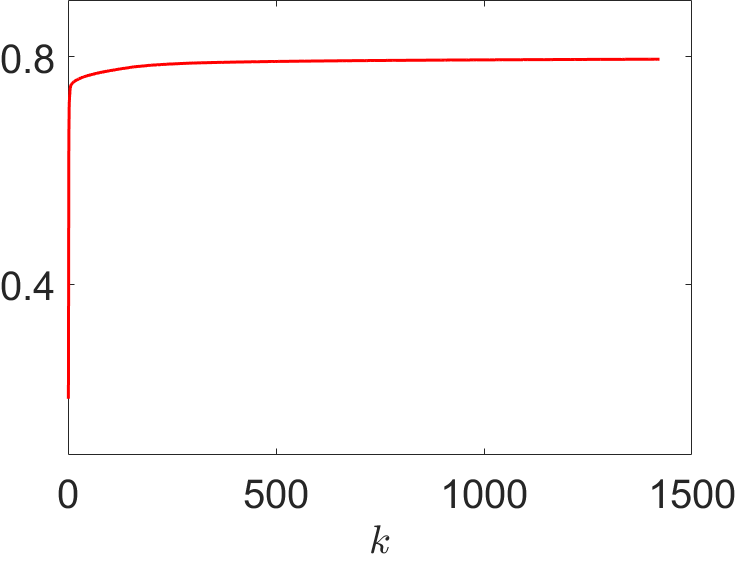}
		\caption{SSIM}
		\label{fig:ssim}
	\end{subfigure}
	\caption{Convergence plots for the proposed IRWP-ADMM approach outlined in Alg.~\ref{alg:1} applied to restoring the test image \texttt{church} via the WTV-L$_2$ variational model.}
	\label{fig:ch_conv2}
\end{figure}

Finally, the PSNR/ISNR/SSIM values achieved for the reconstructions shown in Figures~\ref{fig:church},\ref{fig:monarch}, and for the ones obtained considering a less severe corruption level are reported in Table~\ref{tab:2}.

\begin{table}
	\centering
	\renewcommand{\arraystretch}{1.55}	
	\begin{tabular}{l|c|ccc|ccc}
		\multicolumn{2}{c}{\quad}
		&\multicolumn{3}{c}{\texttt{band}\,{=}\,9, \texttt{sigma}\,{=}\,2, $\sigma\,{=}\,0.05$}&\multicolumn{3}{c}{\texttt{band}\,{=}\,13, \texttt{sigma}\,{=}\,3, $\sigma\,{=}\,0.1$}\\
		\hline\hline
		\multicolumn{2}{c}{\quad}
		&PSNR&ISNR&SSIM&PSNR&ISNR&SSIM\\
		\hline\hline
		\multirow{3}{*}{\STAB{\rotatebox[origin=c]{90}{\bf{\texttt{church}}}}}&$\mathbf{x}_{\mathrm{IRWP}}$&26.4215&3.5851
		& 
		0.8592&23.3450 &4.1501   &
		0.7979   \\
		&$\mathbf{x}_{\mathrm{TIK}}$& 23.7592 & 0.9229&    
		0.6739& 21.6448 &
		2.4499&0.6565  \\
		&$\bar{\mathbf{b}}$&   22.8363& -&   0.4378& 19.1949&-& 0.1988\\
		\hline\multirow{3}{*}{\STAB{\rotatebox[origin=c]{90}{\bf{\texttt{monarch}}}}}&$\mathbf{x}_{\mathrm{IRWP}}$&24.4333&  2.4797
		& 
		0.8231& 21.2729  &2.9446   & 0.7399
		\\
		&$\mathbf{x}_{\mathrm{TIK}}$&    22.8309 & 0.8772&    
		0.7321&20.4384  &
		2.1101 &0.6686  \\
		&$\bar{\mathbf{b}}$& 21.9537& -&   0.5008& 18.3283 &-&0.2397 \\
	\end{tabular}
	\caption{PSNR, ISNR and SSIM values achieved by the bicubic interpolation and the two considered variational models coupled with the proposed RWP and solved by the IRWP-ADMM approach in Alg.~\ref{alg:1}.}
	\label{tab:2}
\end{table}

\subsection{IRWP-IRL1 for CEL0 regularisation}

In this last example, we consider the problem of molecule localisation starting from a microscopy image. In the test image \texttt{molecules} shown in Figure~\ref{fig:mic_tr}, the samples are represented by sparse point sources. The original image has been corrupted by Gaussian blur with parameters \texttt{band}=13 and \texttt{sigma}=3, then decimated with factors $d_c=d_r=2$, and finally degradated by adding an AWGN realisation with $\sigma$ equal to the $2\%$ of the noiseless signal. The acquired image is shown in Figure~\ref{fig:mic_data}.

In Figure~\ref{fig:vals_micro}, we show the behavior of the residual whiteness measure (left) and of the Jaccard index $J_4$ (right). Also in this example, the $W$ function exhibits a global minimiser and the $\tau^*$ values selected by RWP and IRWP are very close and slightly larger than $\tau^*=1$ corresponding to DP. Unlike the previously considered quality measures, the Jaccard index does not present a \emph{smooth} behavior, as it measures the precision of the molecules localisation rather than some global visual properties such as the ISNR and the SSIM. However, the $J_4$ value selected by the IRWP-IRL1 is closer to the achieved maximum when compared to the DP.

The output reconstruction is shown in Figure~\ref{fig:micro}, together with the initialisation for the IRWP-IRL1 algorithm computed by employing the IRWP-ADMM for solving the L$_1$-L$_2$ variational model.

\begin{figure}
	\centering
	\begin{subfigure}[t]{0.43\textwidth}
		\centering
		\includegraphics[height=3.5cm]{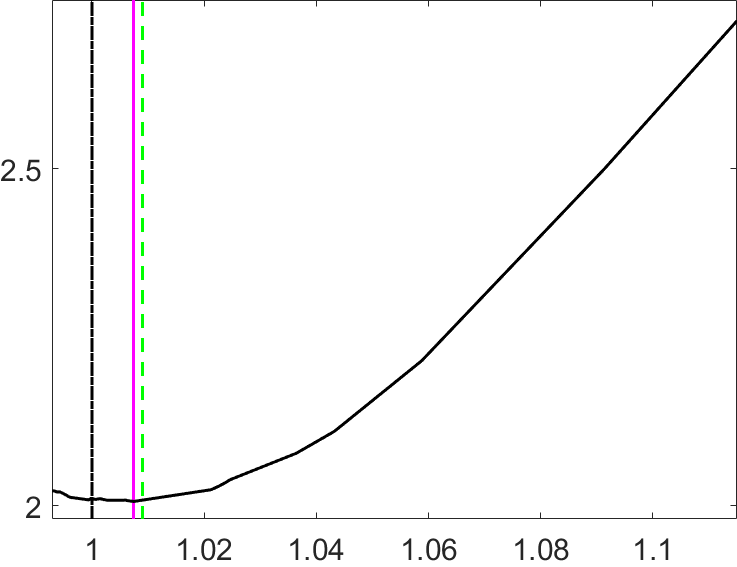}
		\label{fig:mon_ws}
	\end{subfigure}
	\begin{subfigure}[t]{0.43\textwidth}
		\centering
		\includegraphics[height=3.5cm]{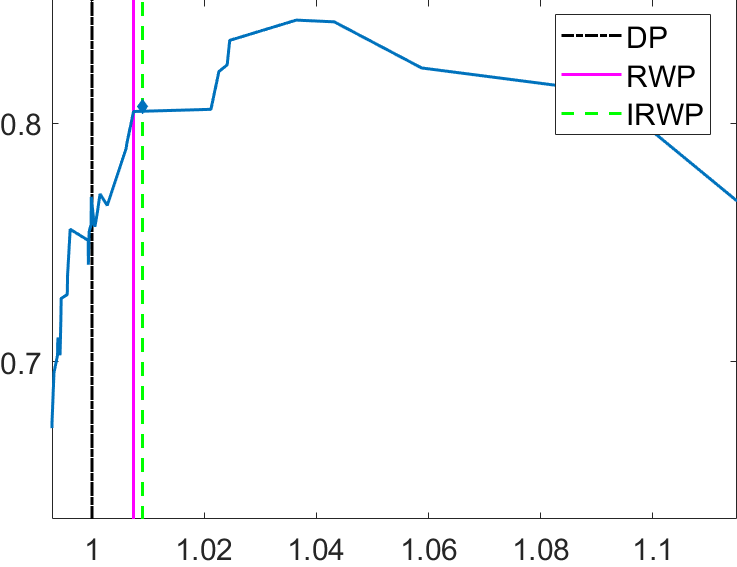}
		\label{fig:mon_vals}
	\end{subfigure}
	\caption{Whiteness measure functions for the IRWP-IRL1 (first column) and values of $J_4$ for different $\tau$s (second column) for the test image \texttt{molecules}.}
	\label{fig:vals_micro}
\end{figure}

\begin{figure}
	\centering
	\begin{subfigure}{0.4\textwidth}
		\centering
		\includegraphics[height=3.5cm]{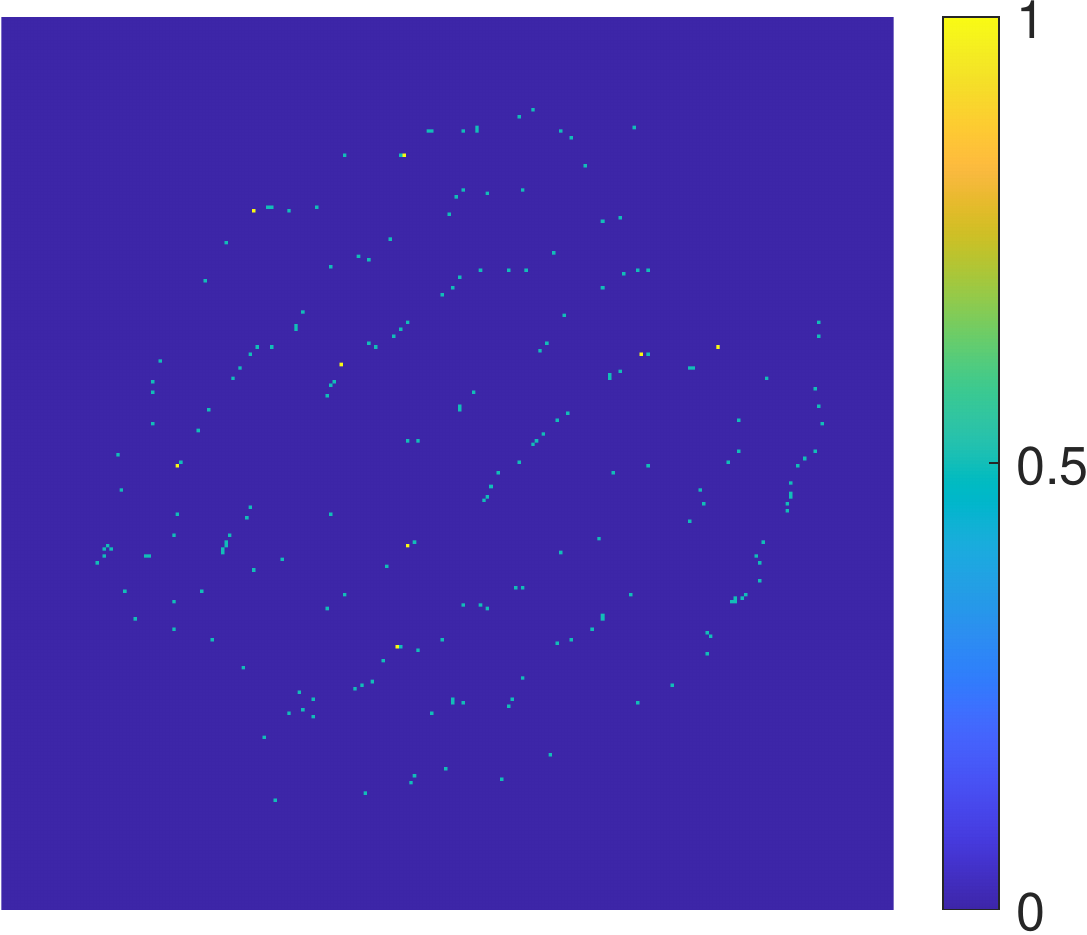}
		\caption{original $\mathbf{x}$}
		\label{fig:mic_tr}
	\end{subfigure}
	\begin{subfigure}{0.4\textwidth}
		\centering
		\includegraphics[height=3.4cm]{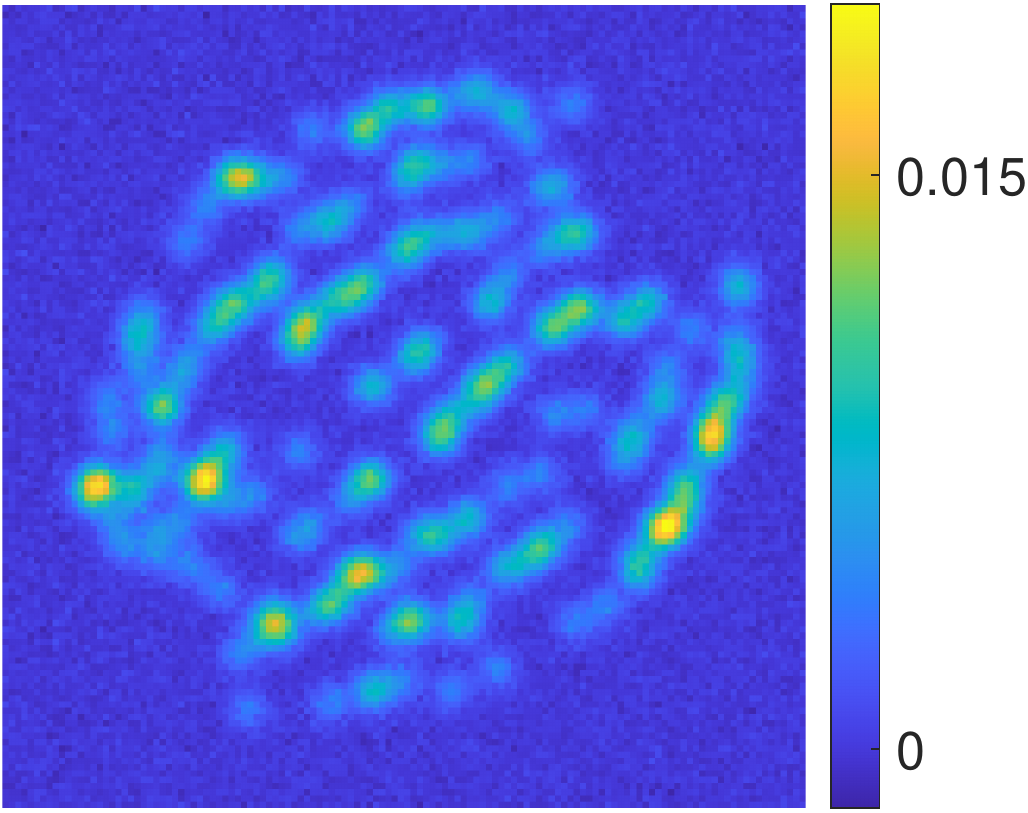}
		\caption{$\mathbf{b}$(x2)}
		\label{fig:mic_data}
	\end{subfigure}\\
	\begin{subfigure}{0.4\textwidth}
		\centering
		\includegraphics[height=3.5cm]{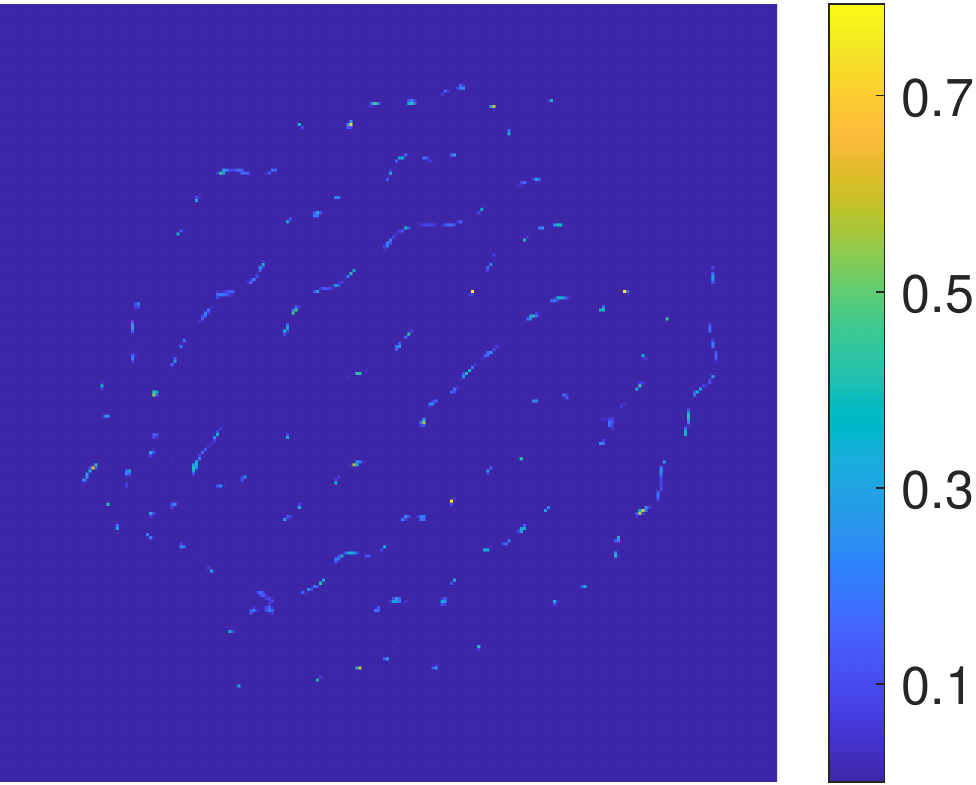}
		\caption{L$1$-L$_2$}
		\label{fig:mic_L1}
	\end{subfigure}
	\begin{subfigure}{0.4\textwidth}   \centering
		\includegraphics[height=3.5cm]{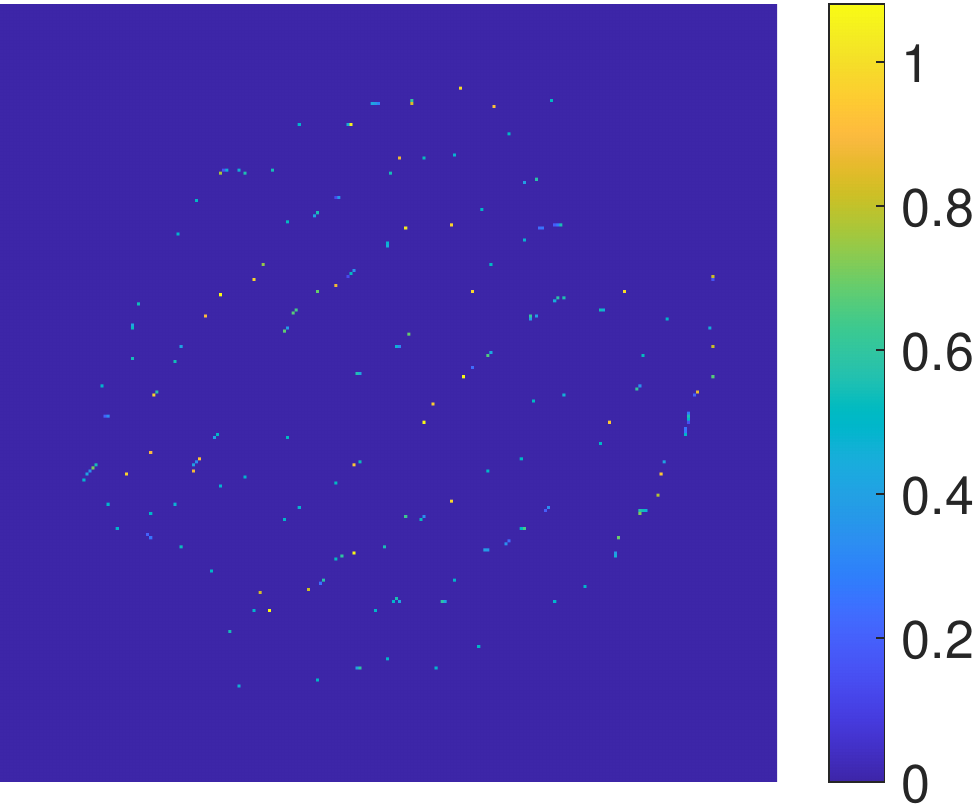}
		\caption{IRL1}
		\label{fig:mic_irwp}
	\end{subfigure}
	\caption{Original test image \texttt{molecules} ($256 \times 256$) (a), observed image $\mathbf{b}$ ($128 \times 128$) (b), reconstruction via L$_1$-L$_2$ (c) and via IRL1 (d).}
	\label{fig:micro}
\end{figure}

A few more insights about the performance of the proposed approach are given in Table~\ref{tab:3}, where we report the Jaccard indices $J_\delta$, $\delta\in\{0,2,4\}$, for the reconstruction obtained by the IRWP-ADMM for the L$_1$-L$_2$ model and by the IRWP-IRL1 for the CEL0 model (right). In the Table, we also show the Jaccard indices achieved when applying a lower degradation level (left).

\begin{table}
	\centering
	\renewcommand{\arraystretch}{2}	
	\begin{tabular}{c|ccc|ccc}
		\multicolumn{1}{c}{$\quad$}&\multicolumn{3}{c}{\texttt{band}\,{=}\,9, \texttt{sigma}\,{=}\,2,  $\%\sigma\,{=}\,1$}&\multicolumn{3}{c}{\texttt{band}\,{=}\,13, \texttt{sigma}\,{=}\,3, $\%\sigma\,{=}\,2$}\\
		\hline\hline
		\multicolumn{1}{c}{\quad}&
		$J_0$&$J_2$&$J_4$&$J_0$&$J_2$&$J_4$\\
		\hline\hline
		$\mathbf{x}_{\mathrm{IRWP}}$& 0.9951 & 0.9951  &0.9951&0.3042&0.7832&0.8072   \\
		$\mathbf{x}_{\mathrm{L}_1}$&0.6126  &0.6126 &0.6126  &0.2451&0.2957&0.2957 \\
	\end{tabular}
	\caption{Jaccard indices achieved by the L$_1$-L$_2$ variational model and the proposed IRWP-IRL1 approach outlined in Alg.~\ref{alg:2}.}
	\label{tab:3}
\end{table}

We conclude by showing, for the most severe corruption, the behavior of the regularisation parameter $\mu$ and of the Jaccard indices along the outer iterations of Alg.~\ref{alg:2}. One can observe that, although the monitored quantities do not exhibit a smooth nor a monotonic behavior (as a further consequence of the definition of $J_d$), they stabilise thus reflecting the convergence of the scheme.

\begin{figure}
	\centering
	\begin{subfigure}{0.42\textwidth}
		\includegraphics[height=2.9cm]{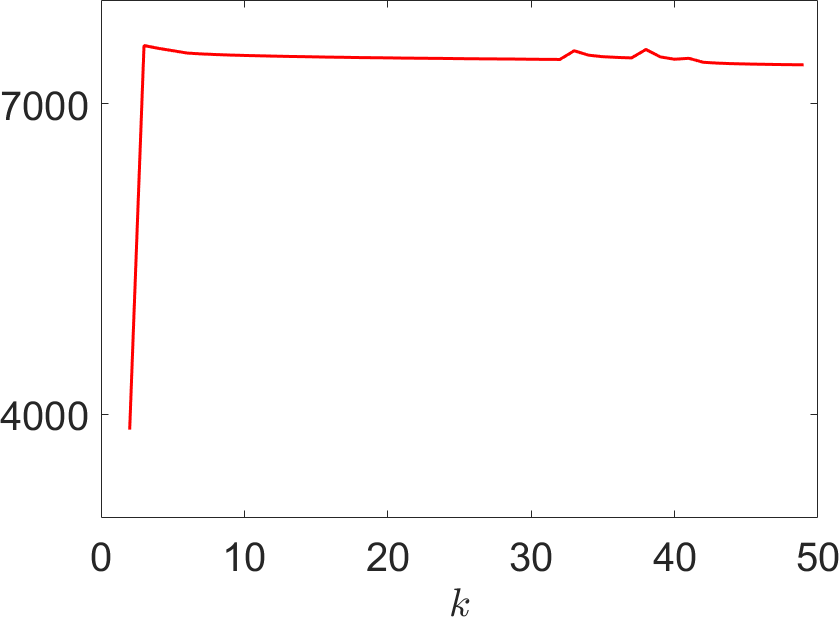}
		\caption{$\mu$}
		\label{fig:mic_mu}
	\end{subfigure}
	\begin{subfigure}{0.42\textwidth}
		\includegraphics[height=2.9cm]{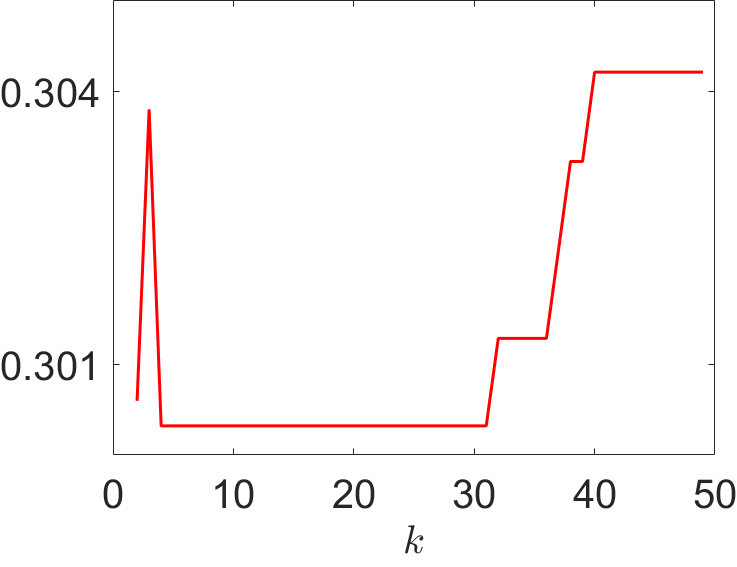}
		\caption{$J_0$}
		\label{fig:mic_j1}
	\end{subfigure}
	\begin{subfigure}{0.42\textwidth}
		\includegraphics[height=2.9cm]{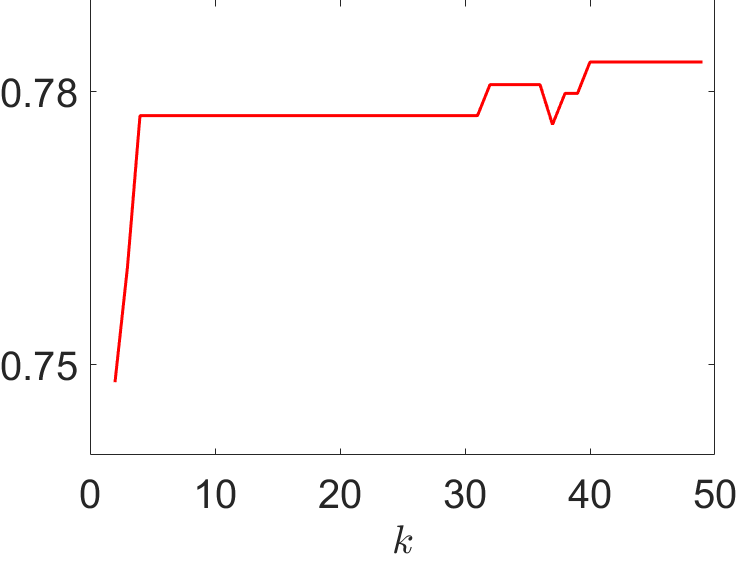}
		\caption{$J_2$}
		\label{fig:mic_j2}
	\end{subfigure}
	\begin{subfigure}{0.42\textwidth}
		\includegraphics[height=2.9cm]{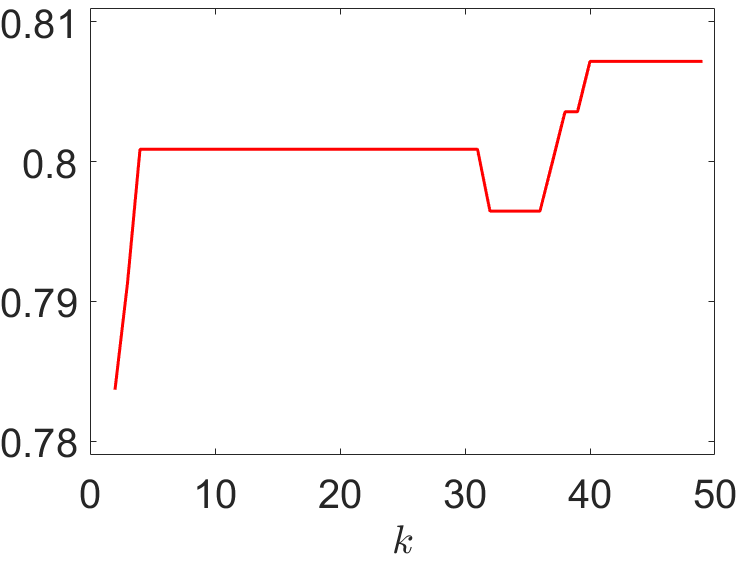}
		\caption{$J_4$}
		\label{fig:mic_j3}
	\end{subfigure}
	\caption{Convergence plots for the proposed IRWP-IRL1 approach outlined in Alg.~\ref{alg:2} applied to restoring the test image \texttt{molecules} via the CEL0 model.}
	\label{fig:mic_conv2}
\end{figure}

\section{Conclusions} We proposed an automatic selection strategy for the regularisation parameter of single image super-resolution variational models based on the Residual Whiteness Principle applied along the iterations of an ADMM-based optimisation scheme. The approach, that can be considered a simultaneous extension of two previous works, has been proven to be successfully applicable to the reconstruction of highly degradated images by means of a large family of convex variational models, among which we performed the TIK-L$_2$, the TV-L$_2$ and the WTV-L$_2$ model. Moreover, we generalised the proposed iterative numerical scheme so as to effectively deal with a specific class of non-convex variational models, i.e. the ones admitting a convex majorant. In particular, we focused on the case of the CEL0 functional whose minimisation has been tackled by the IRL1 strategy coupled with the proposed automatic estimation approach. When compared to standard parameter estimation techniques relying on knowing the noise standard deviation, such as the Discrepancy Principle, our method has been shown to outperform the competitors in terms of image quality measures. Moreover, the empirical results about the convergence of the nested iterative schemes employed reflect the robustness of our method.

\section*{Acknowledgements}
Research of AL, MP, FS was supported by the “National Group for Scientific
Computation (GNCS-INDAM)” and by ex60 project by the University of Bologna
“Funds for selected research topics”. LC acknowledges the support received by the EU H2020 RISE NoMADS, GA 777826 and  the UCA JEDI IDEX grant DEP ``Attractivité du territoire".

\appendix

\section{Proofs of the results}

\begin{proof}[Proof of Proposition \ref{prop:w_sr}]
	
	We start observing
	\begin{equation}
	\mathbf{r}^{*}(\mu)= \mathbf{S K x^*}(\mu)-\mathbf{b}=\mathbf{S K x}^*(\mu)-\mathbf{SS}^H\mathbf{b} =\mathbf{S}\mathbf{r}^*_H(\mu)\,,
	\end{equation}
	where $\mathbf{r}_H^*(\mu)=\mathbf{Kx}^*(\mu) - \mathbf{b}_H$ is the high-resolution residual, while $\mathbf{b}_H=\mathbf{S}^H\mathbf{b}$. The denominator in \eqref{eq:Wfun_freq_a} can be thus expressed as follows
	\begin{equation}
	\label{eq:deno last}
	\|\mathbf{r}^*(\mu)\|_2^4   = \|\mathbf{Sr}^*_H(\mu)\|_2^4 = \|\mathbf{S}^H\mathbf{Sr}_H^*(\mu)\|_2^4 = \|\mathbf{F}^H(\mathbf{F}\mathbf{S}^H\mathbf{S}\mathbf{F}^H)\mathbf{F}\mathbf{r}^*_H(\mu)\|_2^4\,,
	\end{equation}
	where the second equality comes from recalling that $\mathbf{S}^H$ interpolates $\mathbf{Sr}^*_H(\mu)$ with zeros giving null contribution when computing the norm.
	From Lemma \ref{lem:FSSH} and by applying the Parseval's theorem, we get the following chain of equalities:
	\begin{align}
	\|\mathbf{r}^*(\mu)\|_2^4 =& \left\|(1/d)\mathbf{F}^H( \mathbf{J}_{d_r} \otimes \mathbf{I}_{n_r}) \otimes(\mathbf{J}_{d_c} \otimes \mathbf{I}_{n_c}) \tilde{\mathbf{r}}^*_H(\mu) \right\|_2^4\\
	\label{eq:norm_rH}
	=&  \left\|(1/d)( \mathbf{J}_{d_r} \otimes \mathbf{I}_{n_r}) \otimes(\mathbf{J}_{d_c} \otimes \mathbf{I}_{n_c})  \tilde{\mathbf{r}}^*_H(\mu) \right\|_2^4\,.
	\end{align}
	The non-zero entries of the matrix introduced in Lemma \ref{lem:FSSH}, which are all equal to $1$, are arranged along replicated patterns; this particular structure can be exploited by considering a permutation matrix $\mathbf{P}\in\R^{N\times N}$ such that:
	\begin{equation}  \label{eq:permutation}
	\mathbf{P}\left[( \mathbf{J}_{d_r} \otimes \mathbf{I}_{n_r}) \otimes(\mathbf{J}_{d_c} \otimes \mathbf{I}_{n_c})\right] \mathbf{P}^T =(\mathbf{I}_n\otimes \mathbf{J}_d)\,. 
	\end{equation}
	The designed permutation acts on the matrix of interest by gathering together the replicated rows and columns. In Figure~\ref{fig:PERM}, we show the structure of the matrix in \eqref{eq:FSSH} and of the permuted matrix in  \eqref{eq:permutation} for $n_r{=}n_c{=}3$ and $d_r{=}d_c{=}2$.
	
	\begin{figure}
		\centering
		\begin{subfigure}[t]{0.4\textwidth}
			\centering
			\includegraphics[height=3.8cm]{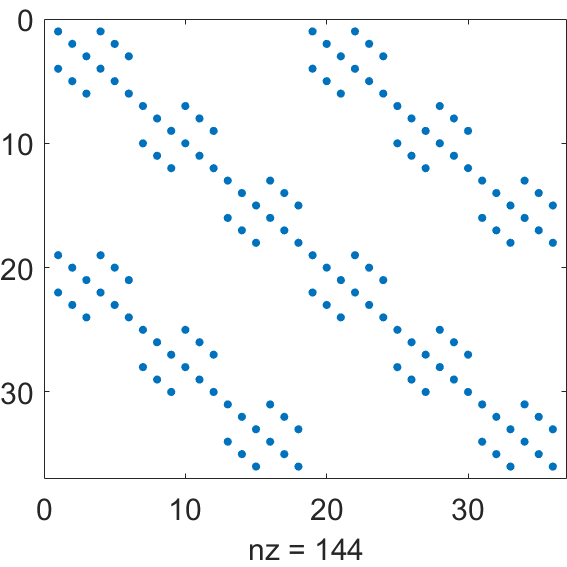}
			\caption{}
			\label{fig:noperm}
		\end{subfigure}
		\begin{subfigure}[t]{0.4\textwidth}
			\centering
			\includegraphics[height=3.8cm]{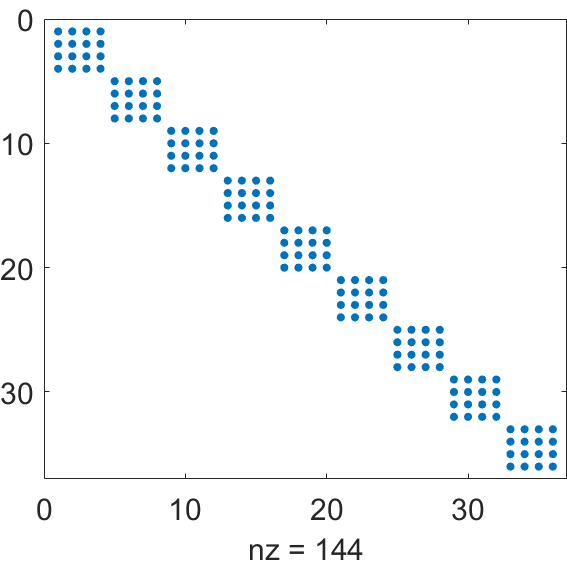}
			\caption{}
			\label{fig:perm}
		\end{subfigure}
		\caption{Structure of the matrix in \eqref{eq:FSSH} (left) and of the permutation induced by $\mathbf{P}$ (right) for $n_r{=}n_c{=}3$, $d_r{=}d_c{=}2$.}\label{fig:sidecap}
		\label{fig:PERM}
	\end{figure}
	
	Hence, the expression in \eqref{eq:norm_rH} can be rewritten as
	\begin{equation}
	\|\mathbf{r}^*(\mu)\|_2^4 = \left\|\frac{1}{d}\mathbf{P}\left[( \mathbf{J}_{d_r} \otimes \mathbf{I}_{n_r}) \otimes(\mathbf{J}_{d_c} \otimes \mathbf{I}_{n_c})\right] \mathbf{P}^T\mathbf{P} \tilde{\mathbf{r}}^*_H(\mu) \right\|^4_2=  \left\|\frac{1}{d}( \mathbf{I}_{n} \otimes \mathbf{J}_{d})  \hat{\mathbf{r}}^*_H(\mu) \right\|^4_2\,,
	\end{equation}
	where
	\begin{equation}
	\label{eq:iota}
	\left((\mathbf{I}_n\otimes \mathbf{J}_d)\hat{\mathbf{r}}^*_H(\mu) \right)_i = \sum_{j=0}^{d-1}    \left(\hat{\mathbf{r}}^*_H(\mu) \right)_{\iota + j}\,,\;\;\text{with  }\iota:=1+ \Big\lfloor \frac{i-1}{d}\Big\rfloor d\,,
	\end{equation}
	for every $i=1,\ldots N$. The denominator in \eqref{eq:Wfun_freq_a} can be thus expressed as
	\begin{equation}
	\|\mathbf{r}^*(\mu)\|_2^4 =\frac{1}{d^4} \left(\sum_{i=1}^{N}\left| \sum_{j=0}^{d-1}    \left(\hat{\mathbf{r}}^*_H(\mu) \right)_{\iota+ j}  \right|^2  \right)^2\,.
	\end{equation}
	
	Let us now consider the numerator of the function $W(\mu)$ in \eqref{eq:Wfun_freq_a}, which, based on the definitions of auto-correlation given in \eqref{eq:n_ac} and of $\mathbf{S}^H$, reads
	\begin{equation}
	\|\mathbf{r}^*(\mu)\star \mathbf{r}^*(\mu)\|_2^2 = \|\mathbf{Sr}^*_{H}(\mu)\star \mathbf{Sr}^*_{H}(\mu)\|_2^2= \|\mathbf{S}^H\mathbf{Sr}^*_{H}(\mu)\star\mathbf{S}^H \mathbf{Sr}^*_{H}(\mu)\|_2^2\,.
	\end{equation}
	By applying again the Parseval's theorem and the convolution theorem, we get
	\begin{align}
	\|\mathbf{r}^*(\mu)\star \mathbf{r}^*(\mu)\|^2_2 =& 
	\|\mathbf{F}\left((\mathbf{S}^H\mathbf{Sr}^*_{H}(\mu))\star(\mathbf{S}^H \mathbf{Sr}^*_{H}(\mu))\right)\|^2_2\\
	=&\|\mathbf{F}(\mathbf{S}^H\mathbf{Sr}^*_{H}(\mu))\odot\overline{\mathbf{F}(\mathbf{S}^H \mathbf{Sr}^*_{H}(\mu)}\|^2_2\\
	=&\|\mathbf{F}(\mathbf{S}^H\mathbf{S})\mathbf{F}^H\mathbf{Fr}^*_{H}(\mu)\odot\overline{\mathbf{F}(\mathbf{S}^H\mathbf{S})\mathbf{F}^H \mathbf{Fr}^*_{H}(\mu)}\|^2_2\,,\label{eq:num}
	\end{align}
	where $\odot$ denotes the Hadamard matrix product operator. The expression in \eqref{eq:num} is manipulated by applying Lemma \ref{lem:FSSH} and the  permutation in \eqref{eq:permutation}, so as to give
	\begin{equation}\label{eq:num last}
	\|\mathbf{r}^*(\mu) \star \mathbf{r}^*(\mu)\|_2^2 =\frac{1}{d^4} \sum_{i=1}^{N}\left| \sum_{j=0}^{d-1}    \left(\hat{\mathbf{r}}^*_H(\mu)\right)_{\iota + j}  \right|^4\,.
	\end{equation}
	Finally, plugging \eqref{eq:num last} and \eqref{eq:deno last} into \eqref{eq:Wfun_freq_a}, we get the following form for the whiteness measure $W(\mu)$ for a super-resolution problem
	\begin{equation}
	\label{eq:white_new}
	W(\mu) = \left(\sum_{i=1}^N|w_i(\mu)|^4\right) /\left(\sum_{i=1}^N|w_i(\mu)|^2\right)^2\,,\; w_i(\mu)=\sum_{j=0}^{d-1}(\hat{\mathbf{r}}_H(\mu))_{\iota + j}\,.
	\end{equation}
\end{proof}

\begin{proof}[Proof of Proposition \ref{prop:x}]
	We impose a first order optimality condition on the cost function in \eqref{eq:l2l2} with respect to $\mathbf{x}$, thus getting:
	\begin{equation}
	\label{eq:x_gentik}
	\mathbf{x}^*(\mu) = (\mu (\mathbf{SK})^H(\mathbf{SK})+\mathbf{L}^H\mathbf{L})^{-1}(\mu (\mathbf{SK})^H \mathbf{b} +\mathbf{L}^H\mathbf{v})\,,
	\end{equation}    
	which can be manipulated in terms of $\mathbf{F}$ and $\mathbf{F}^H$ to deduce
	\begin{align}
	\mathbf{x}^*(\mu) = &(\mu \mathbf{F}^H\mathbf{F}\mathbf{K}^H\mathbf{F}^H\mathbf{F}\mathbf{S}^H\mathbf{S}\mathbf{F}^H\mathbf{F}\mathbf{K}\mathbf{F}^H\mathbf{F}+\mathbf{F}^H\mathbf{F}\mathbf{L}^T\mathbf{L}\mathbf{F}^H\mathbf{F})^{-1}(\mu \mathbf{K}^H\mathbf{S}^H \mathbf{b} +\mathbf{L}^H\mathbf{v})\\
	\label{eq:x_2}
	=& (\mu \mathbf{F}^H\mathbf{\Lambda}^H(\mathbf{F}\mathbf{S}^H\mathbf{S}\mathbf{F}^H)\mathbf{\Lambda}\mathbf{F}+\mathbf{F}^H\sum_{j=1}^s\mathbf{\Gamma}_j^H\mathbf{\Gamma}_j\mathbf{F})^{-1}(\mu \mathbf{K}^H\mathbf{S}^H \mathbf{b} +\mathbf{L}^H\mathbf{v})\,,
	\end{align}
	where $\mathbf{\Lambda},\,\mathbf{\Gamma}_j$ are defined in \eqref{eq:KL_diag}. Lemma \ref{lem:FSSH} provides a useful expression for the product  $(\mathbf{FS}^H\mathbf{SF}^H)$,  by which \eqref{eq:x_2} becomes:
	\begin{align}
	\mathbf{x}^*(\mu) =&  \left(\frac{\mu}{d} \mathbf{F}^H\mathbf{\Lambda}^H\mathbf{P}^T(\mathbf{I}_n\otimes \mathbf{J}_d)\mathbf{P\Lambda}\mathbf{F}+\mathbf{F}^H\sum_{j=1}^s\mathbf{\Gamma}_j^H\mathbf{\Gamma}_j\mathbf{F}\right)^{-1}\left(\mu \mathbf{K}^H\mathbf{S}^H \mathbf{b} +\sum_{j=1}^s\mathbf{L}_j^H\mathbf{v}_j\right)\\
	=&\mathbf{F}^H\left(\frac{\mu}{d}\mathbf{\Lambda}^H\mathbf{P}^T(\mathbf{I}_n\otimes \mathbf{J}_d)\mathbf{P\Lambda}+\sum_{j=1}^s\mathbf{\Gamma}_j^H\mathbf{\Gamma}_j\right)^{-1}\mathbf{F}\left(\mu \mathbf{K}^H\mathbf{F}^H\mathbf{F}\mathbf{S}^H \mathbf{b} +\sum_{j=1}^s\mathbf{L}_j^H\mathbf{F}^H\mathbf{F}\mathbf{v}_j\right)\\
	=&\mathbf{F}^H\left(\frac{\mu}{d}\mathbf{\Lambda}^H\mathbf{P}^T(\mathbf{I}_n\otimes \mathbf{J}_d)\mathbf{P\Lambda}+\sum_{j=1}^s\mathbf{\Gamma}_j^H\mathbf{\Gamma}_j\right)^{-1}\left(\mu \mathbf{\Lambda}^H\tilde{\mathbf{b}}_H +\sum_{j=1}^s\mathbf{\Gamma}_j^H\tilde{\mathbf{v}}_j\right)\,,  \label{eq:optimality1}
	\end{align}
	where $\mathbf{v}=(\mathbf{v}^T_1,\ldots,\mathbf{v}_s^T)^T$, 
	and $\tilde{\mathbf{b}}_H=\mathbf{F} \mathbf{b}_H=\mathbf{F}\mathbf{S}^H\mathbf{b}$ contains $d$ replication of $\tilde{\mathbf{b}}$ - see, e.g., \cite{MilanfarSR}.
	We now introduce the following operators
	\begin{equation}
	\label{eq:lam_bar}
	\underline{\mathbf{\Lambda}} := \left(\mathbf{I}_n\otimes \mathbf{1}_d^T\right)\mathbf{P\Lambda}  \qquad
	\underline{\mathbf{\Lambda}}^H := \mathbf{\Lambda}^H\mathbf{P}^T\left(\mathbf{I}_n\otimes \mathbf{1}_d\right)
	\end{equation}
	where $\mathbf{1}_d\in\R^d$ is a vector of ones. In compact form, equation \eqref{eq:optimality1} reads: 
	\begin{equation}\label{eq:x_sol}
	\mathbf{x}^*(\mu)=\mathbf{F}^H\left(\frac{\mu}{d} \underline{\mathbf{\Lambda}}^H\underline{\mathbf{\Lambda}}+\sum_{j=1}^s\mathbf{\Gamma}_j^H\mathbf{\Gamma}_j\right)^{-1}\left(\mu \mathbf{\Lambda}^H\tilde{\mathbf{b}}_H +\sum_{j=1}^s\mathbf{\Gamma}_j^H\tilde{\mathbf{v}}_j\right)\,.
	\end{equation}
	Proceeding as in \cite{FSR}, we can now apply the Woodbury formula \eqref{eq:woodbury} and perform few manipulations, so as to obtain that the expression in \eqref{eq:x_sol} becomes:
	\begin{equation}
	\mathbf{x}^*(\mu) 
	=\mathbf{F}^H\left[\mathbf{\Psi}-\mu\mathbf{\Psi}\underline{\mathbf{\Lambda}}^H\left(d\mathbf{I}+\mu\underline{\mathbf{\Lambda}}\mathbf{\Psi}\underline{\mathbf{\Lambda}}^H\right)^{-1}\underline{\mathbf{\Lambda}}\mathbf{\Psi}\right]\left(\mu \mathbf{\Lambda}^H\tilde{\mathbf{b}}_H +\sum_{j=1}^s\mathbf{\Gamma}_j^H\tilde{\mathbf{v}}_j\right)\,, \label{eq:x_sol_2}
	\end{equation}
	where $\mathbf{\Psi} = \left(\sum_{j=1}^s\mathbf{\Gamma}_j^H\mathbf{\Gamma}_j+\epsilon\right)^{-1}$ and the parameter $0<\epsilon\ll1$ guarantees the inversion of $\sum_{j=1}^s\mathbf{\Gamma}_j^H\mathbf{\Gamma}_j$.  $\qedsymbol$
\end{proof}

\begin{proof}[Proof of Proposition \ref{lem:new}]
	Recalling property \eqref{eq:kron} in Lemma \ref{lem:kron}, we get the following chain of equalities
	\begin{align}
	\underline{\mathbf{\Lambda}}^H \mathbf{\Phi} \underline{\mathbf{\Lambda}}=& \mathbf{\Lambda}^H\mathbf{P}^T(\mathbf{I}_n\otimes \mathbf{1}_d)\mathbf{\Phi}(\mathbf{I}_n\otimes \mathbf{1}_d^T)\mathbf{P\Lambda}   =\mathbf{\Lambda}^H\mathbf{P}^T(\mathbf{I}_n\otimes \mathbf{1}_d)(\mathbf{\Phi}\otimes \mathbf{1}_d^T)\mathbf{P\Lambda}     \\
	=&\mathbf{\Lambda}^H\mathbf{P}^T(\mathbf{I}_n\mathbf{\Phi}\otimes \mathbf{1}_d\mathbf{1}_d^T)\mathbf{P\Lambda}=\mathbf{\Lambda}^H\mathbf{P}^T(\mathbf{\Phi}\mathbf{I}_n\otimes \mathbf{J}_d)\mathbf{P\Lambda}\\
	=&\mathbf{\Lambda}^H\mathbf{P}^T(\mathbf{\Phi}\mathbf{I}_n\otimes\mathbf{I}_d \mathbf{J}_d)\mathbf{P\Lambda}=\mathbf{\Lambda}^H\mathbf{P}^T(\mathbf{\Phi}\otimes\mathbf{I}_d)(\mathbf{I}_n\otimes \mathbf{J}_d)\mathbf{P\Lambda}\\
	=&\mathbf{\Lambda}^H\mathbf{P}^T(\mathbf{\Phi}\otimes\mathbf{I}_d)\mathbf{P}\mathbf{P}^T(\mathbf{I}_n\otimes \mathbf{J}_d)\mathbf{P\Lambda},
	\end{align}
	where the sparse block-diagonal matrix $\mathbf{P}^T(\mathbf{\Phi}\otimes\mathbf{I}_d)\mathbf{P}\in\mathbb{R}^{N\times N}$ commutes with $\mathbf{\Lambda}^H$, so that  $\mathbf{\Lambda}^H\mathbf{P}^T(\mathbf{\Phi}\otimes\mathbf{I}_d)\mathbf{P}=\mathbf{P}^T(\mathbf{\Phi}\otimes\mathbf{I}_d)\mathbf{P}\mathbf{\Lambda}^H$. Recalling \eqref{eq:lam_bar}, this yields:
	\begin{equation}
	\underline{\mathbf{\Lambda}}^H \mathbf{\Phi} \underline{\mathbf{\Lambda}} =
	\mathbf{P}^T(\mathbf{\Phi}\otimes\mathbf{I}_d)\mathbf{P}\underline{\mathbf{\Lambda}}^H\underline{\mathbf{\Lambda}}\,,
	\end{equation}
	which completes the proof. $\qedsymbol$
\end{proof}

\begin{proof}[Proof of Corollary \ref{cor:1}]
	We first notice that
	\begin{equation}\label{eq:LPL}
	\underline{\mathbf{\Lambda}}\mathbf{\Psi}\underline{\mathbf{\Lambda}}^H= (\mathbf{I}_n\otimes \mathbf{1}_d^T)\widehat{\mathbf{\Lambda\Psi\Lambda}^H}(\mathbf{I}_n\otimes \mathbf{1}_d)\,,
	\end{equation}
	is diagonal as $\widehat{\mathbf{\Lambda\Psi\Lambda}^H}=\mathbf{P\Lambda\Psi\Lambda}^H\mathbf{P}^T$ is. \mbox{The matrix in \eqref{eq:LPL} can thus be written as}
	\begin{equation}\label{eq:omega}
	\underline{\mathbf{\Lambda}}\mathbf{\Psi}\underline{\mathbf{\Lambda}}^H=\mathrm{diag}(\omega_1,\ldots,\omega_n),\qquad \omega_i = \displaystyle{\sum_{\ell=0}^{d-1}}\frac{|\hat{\lambda}_{\iota+\ell}|^2}{\zeta_{\iota+\ell}+\epsilon}\,,\quad\text{with}\quad \zeta_{\iota+\ell} = \sum_{j=1}^s|{\hat{\gamma}}_{j,\iota+\ell}|^2\,.
	\end{equation}
	Hence, since $\mathbf{\Phi}$ is the inverse of the sum of two diagonal matrices, it is diagonal so we can apply Proposition \ref{lem:new} and deduce the thesis.  $\qedsymbol$
\end{proof}

\begin{proof}[Proof of Proposition \ref{propr:w_fin}]
	We start writing the explicit expression of the action of $\mathbf{P}$ on the Fourier transform of the high resolution residual image:
	\begin{align}
	\hat{\mathbf{r}}_H^*(\mu) =& \mu  \widehat{\mathbf{\Lambda\Psi }}\check{\mathbf{\Lambda}}\hat{\mathbf{b}}_H+ \widehat{\mathbf{\Lambda\Psi }}\sum_{j=1}^s\check{\mathbf{\Gamma}}_j\hat{\mathbf{v}}_j
	-\mu^2 \widehat{\mathbf{\Lambda\Psi }}\left[(d\mathbf{I}+\mu   \underline{\mathbf{\Lambda}}\mathbf{\Psi}   \underline{\mathbf{\Lambda}}^H )^{-1}\otimes\mathbf{I}_d\right]\widehat{\underline{\mathbf{\Lambda}}^H\underline{\mathbf{\Lambda}}\mathbf{\Psi}}\check{\mathbf{\Lambda}} \hat{\mathbf{b}}_H\\
	-&\mu\widehat{\mathbf{\Lambda\Psi }}\left[(d\mathbf{I}+\mu   \underline{\mathbf{\Lambda}}\mathbf{\Psi}   \underline{\mathbf{\Lambda}}^H )^{-1}\otimes\mathbf{I}_d\right]\widehat{\underline{\mathbf{\Lambda}}^H\underline{\mathbf{\Lambda}}\mathbf{\Psi}} \sum_{j=1}^s\check{\mathbf{\Gamma}}_j\hat{\mathbf{v}}_j-\hat{\mathbf{b}}_H\,,
	\end{align}
	whence we can explicitly compute the expression for each component $i=1,\ldots,n$:
	\begin{align}
	\left(\hat{\mathbf{r}}_H^*(\mu)\right)_i=&\mu\left[\frac{|\hat{\lambda_i}|^2}{\zeta_i+\epsilon}\,\hat{b}_{H,i}\right] + \frac{\hat{\lambda}_i\displaystyle{\sum_{j=1}^s\bar{{\hat{\gamma}}}_{j,i}\hat{v}_{j,i}}}{\zeta_i+\epsilon}-\mu^2\left[\frac{|\hat{\lambda}_i|^2}{\zeta_i+\epsilon}\frac{\displaystyle{\sum_{\ell=0}^{d-1}\frac{|\hat{\lambda}_{\iota+\ell}|^2 \,\hat{b}_{H,\iota+n}}{\zeta_{\iota+\ell}+\epsilon}}}{d+\mu\displaystyle{\sum_{\ell=0}^{d-1}}\frac{|\hat{\lambda}_{\iota+\ell}|^2}{\zeta_i+\epsilon}}\right]
	\\-&\mu\left[\frac{|\hat{\lambda}_i|^2}{\zeta_i+\epsilon}\frac{\displaystyle{\sum_{\ell=0}^{d-1}\frac{\hat{\lambda}_{\iota+\ell}\displaystyle{\sum_{j=1}^s\bar{{\hat{\gamma}}}_{j,\iota+\ell}\,\hat{v}_{j,\iota+\ell}}}{\zeta_{\iota+\ell}+\epsilon}}}{d+\mu\displaystyle{\sum_{\ell=0}^{d-1}}\frac{|\hat{\lambda}_{\iota+\ell}|^2}{\zeta_{\iota+\ell}+\epsilon}}\right]-\hat{b}_{H,i}\,.
	\end{align}
	By easy manipulations, we get:
	\begin{align}
	\left(\hat{\mathbf{r}}_H^*(\mu)\right)_i=&\mu\left[\frac{|\hat{\lambda_i}|^2}{\zeta_i+\epsilon}\,\hat{b}_{H,i}\right] + \frac{\hat{\lambda}_i\displaystyle{\sum_{j=1}^s\bar{{\hat{\gamma}}}_{j,i}\hat{v}_{j,i}}}{\zeta_i+\epsilon}- \left[\mu^2\displaystyle{\sum_{\ell=0}^{d-1}\frac{|\hat{\lambda}_{\iota+\ell}|^2\,\hat{b}_{H,\iota+n}}{\zeta_{\iota+\ell}+\epsilon}}\right.\\
	+&\left.\mu\displaystyle{\sum_{\ell=0}^{d-1}\frac{\hat{\lambda}_{\iota+\ell}\displaystyle{\sum_{j=1}^s\bar{{\hat{\gamma}}}_{j,\iota+\ell}\hat{v}_{j,\iota+\ell}}}{\zeta_{\iota+\ell}+\epsilon}}\right]\frac{|\hat{\lambda}_i|^2}{\zeta_i+\epsilon}\left(d+\mu\displaystyle{\sum_{j=0}^{d-1}}\frac{|\hat{\lambda}_{\iota+j}|^2}{\zeta_{\iota+\ell}+\epsilon}\right)^{-1}-\hat{b}_{H,i}\,.
	\end{align}
	We can thus deduce the following expression of the terms in formula \eqref{eq:white_new}:
	\begin{align}\label{eq:PRH}
	\begin{split}
	&\displaystyle{\sum_{\ell=0}^{d-1}}(\hat{\mathbf{r}}_H^*(\mu))_{\iota+\ell} =\frac{1}{d+\mu\displaystyle{\sum_{\ell=0}^{d-1}}\frac{|\hat{\lambda}_{\iota+j}|^2}{\zeta_{\iota+\ell}+\epsilon}}\Bigg[\mu \Bigg(
	d \displaystyle{\sum_{\ell=0}^{d-1}}\frac{|\hat{\lambda}_{\iota+j}|^2}{\zeta_{\iota+\ell}+\epsilon}\,\hat{b}_{H,{\iota+j}} \\
	&-  \displaystyle{\sum_{\ell=0}^{d-1}}\hat{b}_{H,\iota+\ell} \displaystyle{\sum_{\ell=0}^{d-1}}\frac{|\hat{\lambda}_{\iota+\ell}|^2}{\zeta_{\iota+\ell}+\epsilon} \Bigg)
	+d\Bigg( \displaystyle{\sum_{\ell=0}^{d-1}}\frac{\hat{\lambda}_{\iota+\ell}{\sum_{j} \bar{{\hat{\gamma}}}_{j,\iota+\ell}\,\hat{v}_{j,\iota+\ell}}}{\zeta_{\iota+\ell}+\epsilon}-\displaystyle{\sum_{\ell=0}^{d-1}}\,\hat{b}_{H,\iota+\ell}\Bigg) \Bigg]\,.\end{split}
	\end{align}
	In light of its replicating structure, we observe that the action of the permutation $\mathbf{P}$ on $\tilde{\mathbf{b}}_H$ will cluster the identical entries, so that the $\hat{b}_{H,\iota+j}$ can be written as the mean of the set of $d$ values $\{\hat{b}_{H,\iota},\ldots,\hat{b}_{H,\iota+d-1}\}$. This allows to simplify formula \eqref{eq:PRH} as the difference in the first bracket vanishes. By now setting
	\begin{equation}
	\eta_i :=\frac{1}{d}\displaystyle{\sum_{\ell=0}^{d-1}}\frac{|\hat{\lambda}_{\iota+j}|^2}{\zeta_{\iota+\ell}+\epsilon},\quad\varrho_i := \displaystyle{\sum_{j=0}^{d-1}}\hat{b}_{H,\iota+j},\quad\nu_i := \displaystyle{\sum_{\ell=0}^{d-1}}\frac{\hat{\lambda}_{\iota+\ell}\displaystyle{\sum_{j=1}^s\bar{{\hat{\gamma}}}_{j,\iota+\ell}\,\tilde{v}_{j,\iota+\ell}}}{\zeta_{\iota+\ell}+\epsilon}\,,
	\end{equation}
	which can all be computed beforehand. Plugging \eqref{eq:PRH} into \eqref{eq:white_new}  we finally get
	\begin{equation} 
	W(\mu) = \left(\displaystyle{\sum_{i=1}^{N}\left|\frac{\nu_i-\varrho_i}{1+\eta_i\mu}\right|^4   }\right) \Big/ {\left(\displaystyle{\sum_{i=1}^{N}\left|\frac{\nu_i-\varrho_i}{1+\eta_i\mu}\right|^2 }\right)^2  }\,.
	\end{equation}
	This proves the proposition.  $\qedsymbol$
\end{proof}

\bibliographystyle{alpha}
\bibliography{refs}

\end{document}